\theoremstyle{definition}
\newtheorem{theorem}{Theorem}
\numberwithin{theorem}{section}
\newtheorem{proposition}[theorem]{Proposition}
\newtheorem{lemma}[theorem]{Lemma}
\newtheorem{remark}[theorem]{Remark}
\newtheorem{example}[theorem]{Example}
\newtheorem{cor}[theorem]{Corollary}
\DeclareMathOperator{\id}{id}
\DeclareMathOperator{\dist}{dist}
\newcommand{\la}{\lambda}
\title{Vertices of FFLV polytopes}
\author{Evgeny Feigin}
\address{Evgeny Feigin:\newline
National Research University Higher School of Economics, Department of Mathematics,
Usacheva str. 6, 119048, Moscow, Russia,\newline
{\it and }\newline
Tamm Theory Division, Lebedev Physics Institute
}
\email{evgfeig@gmail.com} 
\author{Igor Makhlin}
\address{Igor Makhlin:\newline
Max Planck Institute for Mathematics, Vivatgasse 7, 53111, Bonn, Germany\newline
{\it and }\newline
National Research University Higher School of Economics, 
International Laboratory of Representation Theory and Mathematical Physics\\
Usacheva str. 6, 119048, Moscow, Russia,\newline
{\it and }\newline
Landau Institute for Theoretical Physics, prospekt Akademika Semenova 1A, 142432 Chernogolovka, Russia}
\email{imakhlin@mail.ru}
\date{}
\begin{document}
\begin{abstract}
FFLV polytopes describe monomial bases in irreducible representations of $\mathfrak{sl}_n$ and $\mathfrak{sp}_{2n}$.
We study various sets of vertices of FFLV polytopes. First, we consider the special linear case. We prove the locality of the set of vertices
with respect to the type $A$ Dynkin diagram. Then we describe all the permutation vertices and after that we 
describe all the simple vertices and prove that their number is equal to the large Schr\"oder number.    
Finally, we derive analogous results for symplectic Lie algebras.
\end{abstract}

\maketitle

\setlist[enumerate,1]{leftmargin=1.2em}

\section*{Introduction}
The goal of this paper is to study the vertices of FFLV polytopes in types $A$ and $C$ with ``FFLV'' standing for Feigin-Fourier-Littelmann-Vinberg.
The polytopes in question were introduced and studied in \cite{FFL1,FFL2,V} in order to describe
monomial bases in the PBW degenerate representations of simple Lie algebras $\mathfrak{sl}_n$ and 
$\mathfrak{sp}_{2n}$. In particular, the polytopes are labeled by a highest weight of the corresponding Lie 
algebra and the number of integer points is equal to the dimension of the corresponding irreducible
highest weight representation. FFLV polytopes turned out to be useful in representation theory \cite{FFL3,G1,G2},
algebraic geometry \cite{FFL3,Ki} and combinatorics \cite{ABS}. In particular, it has been shown that
the projective toric varieties attached to FFLV polytopes serve as toric degenerations of flag varieties.
Since the vertices of the Newton polytope of a toric variety carry important information on its
geometric and combinatorial properties, it is desirable to study the structure of the vertices of FFLV polytopes. 

We give descriptions of the set of all the vertices and of two of its subsets: permutation 
vertices and simple vertices. Our main tool is the combinatorics of Dyck paths of types $A$ and $C$.
In Section \ref{defsec} we formulate all the needed definitions in type $A$ and state the theorems.
In Section \ref{allsec} we describe the set of vertices of FFLV polytopes in type $A$. 
Section \ref{permsec} deals with the permutation vertices and in Section~\ref{simpsec} we describe the simple vertices.
Finally, in Section \ref{Csec} we study the symplectic case.

It is worth pointing out that FFLV polytopes bear deep similarities to Gelfand-Tsetlin polytopes \cite{GZ}. 
Polytopes in both families have their integer points parametrizing bases in irreducible representations, provide toric 
degenerations of flag varieties \cite{AB,KM,caldero,FFL3} and appear as Newton-Okounkov bodies \cite{A,Ka,FFL3}. Furthermore, the paper~\cite{ABS} 
establishes a deep combinatorial connection. 

In view of these similarities it is remarkable that the corresponding sets of vertices are rather straightforward to describe in the case of Gelfand-Tsetlin polytopes (see, for instance,~\cite{M}). This is not the case for FFLV polytopes as is shown in this paper.

\section{Type $A$: definitions and results}\label{defsec}

Consider the complex Lie algebra $\mathfrak{g}=\mathfrak{sl}_n$. Fix a Cartan decomposition $\mathfrak g=\mathfrak n^-\oplus\mathfrak h\oplus\mathfrak n^+$ and a dominant integral weight $\lambda\in\mathfrak h^*$. In~\cite{FFL1} a monomial basis 
in the irreducible $\mathfrak{sl}_n$-module $L_\lambda$ with highest weight $\lambda$ was constructed. We now give the definition of this basis.

First we define the combinatorial set $\Pi_\lambda$ that parametrizes the basis. This set is comprised of certain arrays of integers each containing ${n \choose 2}$ elements. Array $T$ consists of elements $T_{i,j}$ with $1\le i<j\le n$. We visualize $T$ as a number triangle in the following way:
\begin{center}
\begin{tabular}{ccccccc}
$T_{1,2}$ &&$ T_{2,3}$&& $\ldots$ && $T_{n-1,n}$\\
&$T_{1,3}$ &&$ \ldots$&& $T_{n-2,n}$ &\\
&&$\ldots$ &&$ \ldots$& &\\
&&&$T_{1,n}$ &&&
\end{tabular}
\end{center}
Thus a horizontal row contains all $T_{i,j}$ with a given difference $j-i$.

To specify when $T\in\Pi_\lambda$ the notion of a {\it Dyck path} is used. We understand a Dyck path to be a sequence of pairs of integers $((i_1,j_1),\ldots,(i_N,j_N))$ with $1\le i<j\le n$ such that $j_1-i_1=j_N-i_N=1$ (both lie in the top row) and $(i_{k+1},j_{k+1})$ is either $(i_k+1,j_k)$ or $(i_k,j_k+1)$ for any $1\le k\le N-1$ (either the upper-right or the bottom-right neighbor of $(i_k,j_k)$). 

Now let $\lambda$ have coordinates $(a_1,\ldots,a_{n-1})$ with respect to the basis of fundamental weights $\omega_i$. $\Pi_\lambda$ is then the set of all arrays $T$ consisting of nonnegative integers such that for any Dyck path $$d=((i_1,j_1),\ldots,(i_N,j_N))$$ one has $$T_{i_1,j_1}+\ldots+T_{i_N,j_N}\le a_{i_1}+a_{i_1+1}+\ldots+a_{i_N}.$$ Here on the right we have a sum of consecutive coordinates which we will denote $M(\lambda,d)$. For a number triangle $T$ and a Dyck path $d$ we will also denote the left-hand side above via $S(T,d)$.

The basis vector $v_T\in L_\lambda$ corresponding to $T\in\Pi_\lambda$ can now be defined explicitly. To do so for every positive root $\alpha$ we fix $f_\alpha\in\mathfrak n^-$ in the root space of $-\alpha$. 
We denote the simple roots $\alpha_1,\ldots,\alpha_{n-1}$ and for any pair of integers $1\le i<j\le n$ we write $\alpha_{i,j}=\alpha_i+\ldots+\alpha_{j-1}$ and $f_{i,j}=f_{\alpha_{i,j}}$. We then have $$v_T=\left(\prod_{i,j} f_{i,j}^{T_{i,j}}\right) v_0$$ where $v_0\in L_\lambda$ is a fixed highest weight vector and an arbitrary order of the factors $f_{i,j}^{T_{i,j}}$ is chosen in each $v_T$.

We now move on to define the {\it FFLV polytope} $P_\lambda$. The polytope is contained in $U=\mathbb R^{n \choose 2}$ with coordinates enumerated by pairs $1\le i<j\le n$. A point $x=(x_{i,j})$ in this space is visualized as a number triangle in the same exact fashion as the arrays comprising $\Pi_\lambda$. We have $x\in P_\lambda$ if and only if all $x_{i,j}\ge 0$ and for any Dyck path $d$ one has $$S(x,d)\le M(\lambda,d).$$ We see that $P_\lambda$ is indeed a convex polytope and $\Pi_\lambda$ is precisely the set of integer points therein.

All three of our main theorems in type $A$ are explicit combinatorial descriptions of various sets of vertices of $P_\lambda$. The first result characterizes the set of all vertices. Here we give the precise statement for a regular weight $\lambda$ (i.e. with all $a_i>0$), the case of a singular $\lambda$ is similar and will be discussed in the end of the next section. 

An important observation is that $P_\lambda$ is equal to the Minkowski sum $$P_{a_1\omega_1}+\ldots+P_{a_{n-1}\omega_{n-1}},$$ this follows directly from the definition. A general property of Minkowski sums of polytopes is that every vertex of a Minkowski sum can be uniquely expressed as the sum of vertices of the Minkowski summands. Consequently, each vertex $v$ of $P_\lambda$ may be uniquely expressed as $v=v_1+\ldots+v_{n-1}$ with $v_i$ being a vertex of $P_{a_i\omega_i }$. Therefore, the task of describing the vertices of $P_\lambda$ can be broken down into two steps:
\begin{enumerate}[label=\arabic*)]
\item describe the set of vertices of each $P_{a_i\omega_i}$ and
\item determine when a sum $v_1+\ldots+v_{n-1}$ with $v_i$ a vertex of $P_{a_i\omega_i}$ is indeed a vertex of $P_\lambda$.
\end{enumerate}

Our approach is to further break down the second step, namely into
\begin{enumerate}[label=\arabic*)]
\item for each $1\le i\le n-2$ determine when $v_i+v_{i+1}$ is a vertex of $P_{a_i\omega_i}+P_{a_{i+1}\omega_{i+1}}$ and
\item show that  $v_1+\ldots+v_{n-1}$ is a vertex of $P_\lambda$ whenever $v_i+v_{i+1}$ is a vertex of $P_{a_i\omega_i}+P_{a_{i+1}\omega_{i+1}}$ for every $1\le i\le n-2$.
\end{enumerate}
(As before, $v_i$ denotes a vertex of $P_{a_i\omega_i}$.)

Now note that $P_\lambda$ is the Minkowski sum of $P_{a_i\omega_i}+P_{a_{i+1}\omega_{i+1}}$ and all the $P_{a_l\omega_l}$ with $l\neq i,i+1$, which combined with step 2) above produces the following theorem.

\begin{theorem}\label{allverts}
Let $\lambda$ be regular and for all $1\le i\le n-1$ choose a vertex $v_i$ of $P_{a_i\omega_i}$. The point $v_1+\ldots+v_{n-1}$ is a vertex of $P_\lambda$ if and only if $v_i+v_{i+1}$ is a vertex of $P_{a_i\omega_i}+P_{a_{i+1}\omega_{i+1}}$ for every $1\le i\le n-2$.
\end{theorem}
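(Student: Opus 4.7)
The direction ($\Rightarrow$) is a standard consequence of Minkowski sum theory. For any fixed $i$ we write $P_\lambda = (P_{a_i\omega_i}+P_{a_{i+1}\omega_{i+1}}) + \sum_{l\ne i,i+1} P_{a_l\omega_l}$; since the decomposition of a vertex of a Minkowski sum as a sum of vertices of the individual summands is unique, $v_i+v_{i+1}$ must be a vertex of the first summand $P_{a_i\omega_i}+P_{a_{i+1}\omega_{i+1}}$.

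For the substantive direction ($\Leftarrow$), I would pass to the dual picture. A point of the Minkowski sum $P_\lambda=\sum P_{a_i\omega_i}$ is a vertex iff some $\ell\in U^*$ is uniquely maximized there, which holds iff $\ell$ is uniquely maximized at $v_i$ on each $P_{a_i\omega_i}$ --- equivalently, iff $\ell$ lies in the (open) normal cone $C_i^\circ$ of $v_i$ in $P_{a_i\omega_i}$ for every $i$. The hypothesis hands us a functional $\ell_{i,i+1}\in C_i^\circ\cap C_{i+1}^\circ$ for each $1\le i\le n-2$, and the task reduces to producing a single $\ell\in\bigcap_i C_i^\circ$.

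The structural feature I would exploit is the locality of support. Setting $\lambda=a_i\omega_i$ in the Dyck-path inequalities, one checks that $P_{a_i\omega_i}$ is contained in the coordinate subspace indexed by the rectangle $R_i=\{(j,k):j\le i<k\}$, so membership of $\ell$ in $C_i^\circ$ depends only on the restriction $\ell|_{R_i}$. I would then build $\ell$ inductively on $i$: having arranged $\ell\in C_1^\circ\cap\ldots\cap C_i^\circ$, adjust coordinates in $R_{i+1}$ so that the restriction to $R_{i+1}$ lies in the image of $C_{i+1}^\circ$ and is compatible with the pairwise datum $\ell_{i,i+1}$.

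The main obstacle is that the rectangles $R_i$ overlap even for non-adjacent indices --- e.g.\ $(1,n)\in R_i$ for every $i$ --- so the inductive adjustment cannot be performed coordinate-freely; the restriction to $R_i$ already fixed at earlier stages intersects nontrivially with $R_{i+1}$. The key lemma that would make the induction go through is that the cone of allowed restrictions $\pi_i(C_i^\circ)$ is thick enough in the overlap directions to accommodate any values previously imposed through $\ell_{i-1,i}$. Establishing such a statement should reduce to an explicit description of $C_i^\circ$ in terms of the tight Dyck-path inequalities at the vertex $v_i$, and working out this combinatorial description --- and in particular showing that pairwise Dynkin-adjacent compatibility suffices to patch globally --- is where I anticipate the bulk of the work.
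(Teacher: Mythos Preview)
Your reduction to the dual picture is correct: the ``if'' direction is equivalent to showing that the open normal cones $C_i^\circ\subset U^*$ of the $v_i$ have nonempty common intersection, given that consecutive pairs intersect. Your localisation observation is also right: membership in $C_i^\circ$ only depends on the coordinates in the rectangle $R_i=\{(j,k):j\le i<k\}$, and the coordinates newly introduced at stage $i+1$ are exactly the column $\{(i+1,k):k>i+1\}$, which lies outside every $R_j$ with $j\le i$.

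The gap is in the inductive step itself. Suppose you have already built $\ell\in C_1^\circ\cap\cdots\cap C_i^\circ$. To extend to $C_{i+1}^\circ$ by only setting the new column, you need the already-fixed restriction $\ell|_{R_i\cap R_{i+1}}$ to lie in the projection of $C_{i+1}^\circ$ to those coordinates. The pairwise hypothesis only tells you that \emph{some} functional in $C_i^\circ$ has a restriction to $R_i\cap R_{i+1}$ that extends into $C_{i+1}^\circ$; it does not tell you that \emph{your particular} $\ell$ does, because $\ell|_{R_i\cap R_{i+1}}$ was shaped by all of $C_1^\circ,\ldots,C_i^\circ$, not just by $C_i^\circ$. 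In other words, you would need that the projection of $C_1^\circ\cap\cdots\cap C_i^\circ$ to $R_i\cap R_{i+1}$ coincides with (or at least meets) the projection of $C_{i+1}^\circ$, and nothing in your outline establishes this. The ``thickness'' heuristic is exactly the missing content of the theorem, and proving it would require detailed control of the facet structure at each $v_i$ --- work comparable to what the paper actually does.

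The paper takes a primal route instead. Writing $v_i=a_i\chi_{A_i}$ for antichains $A_i\subset Q_i$, it builds a directed graph $\Theta$ on $\bigcup A_i$ encoding $\prec$-covers between consecutive antichains, and shows (using the explicit pairwise criterion of Proposition~\ref{tworoots} and its Corollary~\ref{totheleft}) that $\Theta$ coincides with the Hasse diagram of $\bigcup A_i$, that every node sits on a ``complete path'' from a source to a destination, and that these complete paths correspond exactly to the tight Dyck-path constraints at $v$. The heart of the argument is then a combinatorial induction, with respect to a ``lies above'' partial order on complete paths, showing that the indicator of each node of $\Theta$ is a linear combination of indicators of complete paths; this yields that the tight constraints determine $v$ uniquely, hence $v$ is a vertex. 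So rather than patching a dual witness, the paper proves directly that the system of active facet equations has a unique solution.
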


To present our second result we introduce the following notation. For any $T\in\Pi_\lambda$ the vector $v_T$ is obviously a weight vector, we denote its weight 
by $\mu_T$. 

Now, for any element $w$ of the Weyl group $W\cong S_n$ there exists a unique $T_w\in\Pi_\lambda$ with $\mu_{T_w}=w\lambda$. We show that such a $T_w$ is necessarily a vertex of $P_\lambda$ and provide an explicit description of the set of these $T_w$. To do so we associate points in $U$ with certain sets of subsegments in $[1,n]$.

First, for a pair $1\le i<j\le n$ let $d^{i,j}$ denote the Dyck path $$((i,i+1),(i,i+2),\ldots,(i,j),(i+1,j),\ldots,(j-1,j))$$ which goes down and to the right until it reaches $(i,j)$ and after that continues up and to the right.

Now consider $E\subset R$, with $R$ being the set of subsegments in $[1,n]$ with integer endpoints and of positive length. Let $x(E)\in U$ be (uniquely) defined by
\begin{enumerate}[label=\arabic*)]
\item if $[i,j]\notin E$, then $x(E)_{i,j}=0$ and
\item if ${[i,j]\in E}$, then $$S(x(E),d^{i,j})=a_i+a_{i+1}+\ldots+a_{j-1}=M(\lambda,d^{i,j}).$$
\end{enumerate}

The characterization of vertices of the form $T_w$ (which we refer to as ``permutation vertices'') can now be given.

\begin{theorem}\label{permverts}
All the points $T_w,w\in W$ are vertices of $P_\lambda$. Moreover, they comprise the set of points of the form $x(E)$ where $E\subset R$ has the following property. For any two segments in $E$ with a nonempty intersection this intersection lies in $E$ as well.
\end{theorem}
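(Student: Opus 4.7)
The plan is to set up a bijection between $W$ and the family of intersection-closed subsets $E\subset R$, realized by $w\mapsto E_w$ with $T_w=x(E_w)$; this simultaneously establishes that each $T_w$ is a vertex and yields the claimed explicit description. The argument naturally splits into three parts: (A) for each intersection-closed $E$, the point $x(E)$ lies in $P_\lambda$ and is a vertex; (B) the weight $\mu_{x(E)}$ belongs to $W\lambda$, so $x(E)=T_{w(E)}$ for some $w(E)\in W$; and (C) the resulting map $E\mapsto w(E)$ is a bijection onto $W$.

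For (A), the defining system of $x(E)$ consists of $\binom{n}{2}$ equations (the coordinate equations $x_{i,j}=0$ for $[i,j]\notin E$ and the Dyck-path equations $S(x,d^{i,j})=M(\lambda,d^{i,j})$ for $[i,j]\in E$), which can be solved inductively on $j-i$ to yield a unique point $x(E)$. One then verifies nonnegativity of all entries and the general Dyck-path inequality $S(x(E),d)\le M(\lambda,d)$. The intersection-closed hypothesis is essential here: an arbitrary Dyck path $d$ can be decomposed into sub-pieces corresponding to the maximal segments in $E$ that it crosses, and the closure under intersections prevents the kind of overlap that would obstruct this decomposition, reducing the inequality to a combination of the tight equalities built into $x(E)$ with the vanishing of coordinates outside $E$. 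Once $x(E)\in P_\lambda$ is known, its $\binom{n}{2}$ defining equations are linearly independent by the triangular structure of their supports, each coming from a tight face of $P_\lambda$, so $x(E)$ is a vertex.

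For (B) and (C), an inductive argument along the inclusion poset of $E$ (processing intervals from smallest to largest) should yield $\mu_{x(E)}=w(E)\lambda$ for an explicit $w(E)\in S_n$ determined by the order-theoretic structure of $E$. In the converse direction, I would associate to each $w\in W$ a family $E_w$ whose intervals encode the maximal ``coherent blocks'' of descents of $w$, verify that $E_w$ is intersection-closed, check $\mu_{x(E_w)}=w\lambda$, and invoke uniqueness of $T_w$ in $\Pi_\lambda$ to conclude $x(E_w)=T_w$. Matching $w(E_w)=w$ and $E_{w(E)}=E$ then completes the bijection.

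The main obstacle will be in (A), the verification of the Dyck-path inequality for arbitrary $d$: this is a combinatorial path-decomposition argument making essential use of the fact that two overlapping segments in $E$ meet in a segment that again lies in $E$. A secondary challenge in (C) is formulating the explicit map $w\mapsto E_w$ correctly; small cases (for instance $E_{w_0}=\{[1,3]\}$ when $n=3$ versus $E_{w_0}=\{[2,3],[1,4]\}$ when $n=4$) indicate that $E_w$ is not simply the inversion set of $w$ but is better described recursively via the descent pattern of $w$.
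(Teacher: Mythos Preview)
Your three-part plan (A)/(B)/(C) matches the paper's architecture exactly: the paper proves (A) as its Lemma stating that $x(E)$ is a vertex (built on a key technical lemma), (B) as Lemma~\ref{injection}, and (C) as Lemma~\ref{bijection}. For (B) the paper's explicit formula is $w(E)=\prod_{[i,j]\in E}(i\,j)$ with shorter segments applied first, verified on the spanning set $\varepsilon_i=\omega_i-\omega_{i-1}$; your ``process intervals from smallest to largest'' description is heading the same way.

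That said, what you have written is an outline, not a proof, and the two places you flag as obstacles are exactly where the substance lies. For (A), your ``path-decomposition'' sketch is too vague to be an argument. The paper does \emph{not} decompose a general Dyck path into maximal $E$-segments; instead it proves the sharper inequality $S(x(E),d)-x(E)_{i,j}\le M(\lambda,d)$ for any peak $(i,j)$ of $d$, by induction on the horizontal span of $d$, with a four-way case split depending on whether the left and right portions of $d$ meet $E$ at all and, if so, how the nearest $E$-elements on the two sides of the peak overlap (the intersection-closed hypothesis enters only in the overlapping case). This stronger statement is what simultaneously yields nonnegativity of the entries and the Dyck-path inequalities; a naive decomposition does not obviously control the contribution across a peak.

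For (C), your proposed description of $E_w$ via ``maximal coherent blocks of descents'' is not correct as stated: descents alone do not determine $E_w$, as your own example $E_{w_0}=\{[2,3],[1,4]\}$ for $n=4$ already shows (here $w_0$ has descents at every position). The paper instead constructs $\Psi^{-1}(w)$ by a greedy algorithm: process segments $[i,j]$ in order of increasing length while maintaining a running permutation $u$ (initially the identity); add $[i,j]$ to $E$ and replace $u$ by $(i\,j)u$ exactly when $u^{-1}(i)<w^{-1}(i)$ and $u^{-1}(j)>w^{-1}(j)$. Showing that this terminates with $u=w$ and that the resulting $E$ is intersection-closed requires a careful invariant (the paper's ``white/black'' bookkeeping) and is not a one-liner. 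Until you supply concrete arguments for these two steps, the proposal is a sound plan but not yet a proof.
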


\begin{example}\label{permex}
We illustrate Theorem~\ref{permverts} by the example $n=3$ and $\la=(a_1,a_2)=(1,1)$. For all six permutation vertices we write out the corresponding set $E\subset R$, the vertex $x(E)$ itself, the weight $\mu_{x(E)}$ and the permutation $w(E)\in W=S_3$ with $w(E)\la=\mu_{x(E)}$, i.e. $T_{w(E)}=x(E)$. The way $w(E)$ is computed is described in the proof of Lemma~\ref{injection}. To make the fact that $w(E)\la=\mu_{x(E)}$ more visible we express $\mu_{x(E)}$ in terms of the weights $\varepsilon_i$, also defined in the proof of Lemma~\ref{injection}.\vspace{5mm}\\
$E=\varnothing$\hfill$E=\{[1,2]\}$\vspace{2mm}\\
$x(E)=$\vspace{2mm}
\begin{tabular}{ccc}
0 &&0\\
&0 &
\end{tabular}
\hfill
$x(E)=$
\begin{tabular}{ccc}
1 &&0\\
&0 &
\end{tabular}\\
$\mu_{x(E)}=\la=\varepsilon_1-\varepsilon_3$\hfill$\mu_{x(E)}=\la-\alpha_{1,2}=\varepsilon_2-\varepsilon_3$\vspace{2mm}\\
$w(E)=\id$\hfill$w(E)=(1\: 2)$\vspace{7mm}\\
$E=\{[2,3]\}$\hfill$E=\{[1,3]\}$\vspace{2mm}\\
$x(E)=$\vspace{2mm}
\begin{tabular}{ccc}
0 &&1\\
&0 &
\end{tabular}
\hfill
$x(E)=$
\begin{tabular}{ccc}
0 &&0\\
&2 &
\end{tabular}\\
$\mu_{x(E)}=\la-\alpha_{2,3}=\varepsilon_1-\varepsilon_2$\hfill$\mu_{x(E)}=\la-2\alpha_{1,3}=-\varepsilon_1+\varepsilon_3$\vspace{2mm}\\
$w(E)=(2\: 3)$\hfill$w(E)=(1\: 3)$\vspace{7mm}\\
$E=\{[1,2],[1,3]\}$\hfill$E=\{[2,3],[1,3]\}$\vspace{2mm}\\
$x(E)=$\vspace{2mm}
\begin{tabular}{ccc}
1 &&0\\
&1 &
\end{tabular}
\hfill
$x(E)=$
\begin{tabular}{ccc}
0 &&1\\
&1 &
\end{tabular}\\
$\mu_{x(E)}=\la-\alpha_{1,2}-\alpha_{1,3}=-\varepsilon_1+\varepsilon_2$\hfill$\mu_{x(E)}=\la-\alpha_{2,3}-\alpha_{1,3}=-\varepsilon_2+\varepsilon_3$\vspace{2mm}\\
$w(E)=(1\: 3)(1\: 2)=[2\:3\:1]$\hfill$w(E)=(1\: 3)(2\: 3)=[3\:1\:2]$\\
\end{example}

Now let us recall that a polytope's vertex $v$ is said to be simple if the polytope's tangent cone at $v$ is simplicial and unimodular. Our third theorem describes the set of simple vertices of $P_\lambda$ in the case of a {\it regular} weight $\lambda$, which means that the stabilizer $W_\lambda$ is trivial or, equivalently, all $a_i>0$. 
\begin{theorem}\label{simpleverts}
Let $\lambda$ be regular. All the simple vertices of $P_\lambda$ are of the form $x(E)$ for some $E\subset R$. A point $x(E)$ is a simple vertex if and only if for 
any two segments in $E$ with a nonempty intersection this intersection coincides with one of the segments.
\end{theorem}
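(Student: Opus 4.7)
The plan rests on Theorems \ref{allverts} and \ref{permverts} together with a direct analysis of the inequalities tight at $x(E)$.

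First I would argue that every simple vertex must be of the form $x(E)$, i.e.\ a permutation vertex. Via the Minkowski decomposition $v = v_1 + \cdots + v_{n-1}$ from Theorem \ref{allverts}, and since the normal fan of a Minkowski sum is the common refinement of the summands' normal fans, simpliciality of the tangent cone at $v$ imposes a tight facet count on each summand. Reducing to the two-summand case $P_{a_i\omega_i} + P_{a_{i+1}\omega_{i+1}}$, a case analysis shows that only vertex configurations of the form $x(E)$ produce the sharp count of $\binom{n}{2}$ facets at $v$; other configurations incur extra tight Dyck-path inequalities and hence fail simpliciality.

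Now fix an intersection-closed $E \subseteq R$ and examine tightness at $x(E)$. The always-tight inequalities are $x_{i,j} = 0$ for $[i,j] \notin E$ together with the Dyck-path equalities $S(x, d^{i,j}) = M(\lambda, d^{i,j})$ for $[i,j] \in E$; these give exactly $\binom{n}{2}$ tight hyperplanes. For laminar $E$ (any two intersecting segments nested), I would order the elements of $E$ so that descendants in the inclusion forest precede their ancestors, and argue that the corresponding constraint matrix becomes upper-triangular with unit diagonal in the variables $x_{i,j}$ with $[i,j] \in E$: the equation attached to $d^{i,j}$ couples $x_{i,j}$ only to entries $x_{r,s}$ with $[r,s] \subsetneq [i,j]$ already processed. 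This simultaneously yields linear independence and unimodularity. I would also verify that no additional Dyck path saturates at $x(E)$, which follows from the forest structure since an arbitrary path collects only sub-optimal contributions from segments outside a nested chain. Hence the tangent cone is simplicial and unimodular.

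Conversely, suppose $E$ is intersection-closed but not laminar: there exist $[a,b], [c,d] \in E$ with $a < c \le b < d$, forcing $[c,b] \in E$ by Theorem \ref{permverts}. I would exhibit an explicit extra tight Dyck path at $x(E)$, namely the one that traverses $d^{a,b}$ down to the cell $(c,b)$ and then pivots onto the tail of $d^{c,d}$. Its saturation reduces, via a cell-counting telescoping identity that matches it against $d^{a,b} + d^{c,d} - d^{c,b}$, to the saturation of $d^{a,b}$, $d^{c,d}$, and $d^{c,b}$, all built into the definition of $x(E)$. The resulting extra facet of the tangent cone at $x(E)$ precludes simpliciality. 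The main obstacle here is precisely the non-laminar case: pinning down the extra path, verifying its saturation through the telescoping identity, and confirming that the corresponding inequality is genuinely facet-defining for $P_\lambda$ (and hence contributes a true extra facet of the tangent cone rather than a redundant hyperplane). A secondary subtlety is the unimodularity check in the laminar case, which depends on the careful leaves-to-roots ordering of the forest associated to $E$.
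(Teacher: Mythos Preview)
Your argument for the non-laminar direction is essentially the paper's: construct the path with peaks $(a,b)$, $(c,d)$ and valley $(c,b)$, and use the telescoping identity $S(\cdot,d)+S(\cdot,d^{c,b})=S(\cdot,d^{a,b})+S(\cdot,d^{c,d})$ to force both $d$ and $d^{c,b}$ to saturate. Your worry about ``facet-defining'' is unnecessary: what is actually used is that a simple vertex lies on exactly $\binom n2$ independent tight hyperplanes from the defining system, and you have produced a further one.

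For the laminar direction your triangular-matrix picture is correct and gives independence and unimodularity of the $\binom n2$ designated constraints. But two further facts are needed and you gloss over both: that $x(E)_{i,j}>0$ for every $[i,j]\in E$ (so that no extra coordinate hyperplane is tight), and that no Dyck path other than the $d^{i,j}$ with $[i,j]\in E$ saturates. The paper obtains both from Lemma~\ref{super} (an induction on the path giving $S(x(E),d)-x(E)_{i,j}<M(d)$ for any peak $(i,j)$), packaged as Lemma~\ref{lemma}. Your ``follows from the forest structure'' is not yet an argument; you would have to reproduce something equivalent to that lemma.

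The genuine gap is in your first step. You claim that via the Minkowski decomposition and a ``case analysis'' on $P_{a_i\omega_i}+P_{a_{i+1}\omega_{i+1}}$ one sees that only configurations of the form $x(E)$ achieve the facet count $\binom n2$. There is no indication of how this case analysis would run, and the normal-fan refinement fact you quote does not by itself restrict the shape of the vertex. The paper's route here is completely different and direct: first show (Lemma~\ref{nv}) that at a simple vertex every peak and valley of every saturated path has nonzero coordinate --- this is a short linear-dependence argument using simpliciality --- and then (Proposition~\ref{simpE}) decompose each saturated equality as
\[
\sum_{\text{peaks}}S(x,d^{i,j})-\sum_{\text{valleys}}S(x,d^{i,j})=\sum_{\text{peaks}}M(d^{i,j})-\sum_{\text{valleys}}M(d^{i,j}),
\]
so that the span of the tight linear forms $S(\cdot,d)$ is contained in the span of the $S(\cdot,d^{i,j})$ with $x_{i,j}>0$; comparing dimensions forces equality and yields $S(x,d^{i,j})=M(d^{i,j})$ for all such $(i,j)$, i.e.\ $x=x(E)$ with $E=\{[i,j]:x_{i,j}>0\}$. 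Note that this does \emph{not} pass through Theorem~\ref{permverts}: one does not first prove $E$ is intersection-closed; that comes out afterwards from the ``only if'' half of the laminar characterisation. Your plan to reach ``permutation vertex'' directly via Minkowski sums would need a substantially new argument.
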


In particular, one sees that all the simple vertices are of the form $T_w$ for some $w\in S_n$. We prove the following
\begin{theorem}\label{simpleperm}
Let $\lambda$ be regular. Then a vertex $T_w$ is simple if and only if for any $1\le i<j\le n$ the condition 
$w^{-1}(j)\le i$ implies $w^{-1}(i+1)\le j$. 
In particular, the number of simple vetrices is equal to the $(n-1)$-st (large) Schr\"oder number.
\end{theorem}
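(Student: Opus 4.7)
The plan is to proceed in three stages.

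\emph{Stage 1 (reduction to a condition on $E(w)$).} By Theorem~\ref{permverts}, the permutation vertex $T_w$ equals $x(E(w))$ for a unique intersection-closed family $E(w)\subset R$. Combined with Theorem~\ref{simpleverts}, this means $T_w$ is simple if and only if $E(w)$ is laminar, in the sense that any two segments of $E(w)$ sharing a point are nested. Since $E(w)$ is already intersection-closed, the only way laminarity can fail is via two \emph{properly overlapping} segments $[i_1,j_1],[i_2,j_2]\in E(w)$ with $i_1<i_2<j_1<j_2$; two segments merely touching at an endpoint cannot both lie in $E(w)$, because a single point is not a member of $R$.

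\emph{Stage 2 (translation to a condition on $w^{-1}$).} Using the explicit description of $w\mapsto E(w)$ supplied by Lemma~\ref{injection}, I would establish the equivalence: $E(w)$ admits a properly overlapping pair of segments if and only if there exist $i<j$ with $w^{-1}(j)\le i$ and $w^{-1}(i+1)>j$. For the ``$\Leftarrow$'' direction, given such $(i,j)$, I would exhibit two overlapping segments in $E(w)$: one with right endpoint $j$ (witnessed by $w^{-1}(j)\le i$) and one with left endpoint in $[1,i]$ extending past $j$ (witnessed by $w^{-1}(i+1)>j$). For ``$\Rightarrow$'', from properly overlapping $[i_1,j_1],[i_2,j_2]\in E(w)$ with $i_1<i_2\le j_1<j_2$, I would read off a violating pair $(i,j)$—naturally $i=i_2-1$ and $j=j_1$—and verify the inequalities via the Dyck-path saturation equalities defining $x(E(w))$ together with the weight identity $\mu_{x(E(w))}=w\lambda$.

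\emph{Stage 3 (enumeration).} Let $N(n)$ denote the number of admissible $w\in S_n$, equivalently the number of laminar families of positive-length integer subsegments of $[1,n]$. Decomposing such a family $F$ on $[1,n+1]$ according to whether any segment of $F$ contains position $1$ and, if so, on the largest such segment $[1,k]$, one splits $F$ into a laminar subfamily inside $[1,k]$ containing $[1,k]$ (which, via the bijection that toggles membership of $[1,k]$, is counted by $N(k)/2$ for $k\ge 2$) and an independent laminar family on $[k+1,n+1]$ (counted by $N(n+1-k)$). This yields
\[
N(n+1)=N(n)+\sum_{k=2}^{n+1}\tfrac{N(k)}{2}\,N(n+1-k),
\]
with base cases $N(0)=N(1)=1$, whose unique solution is $N(n)=s_{n-1}$, the $(n-1)$-st large Schr\"oder number.

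I expect Stage~2 to be the main technical obstacle, as it requires a careful bidirectional translation between the geometric laminarity of $E(w)$ and the specific arithmetic implication on $w^{-1}$; Stage~1 is a direct application of the preceding theorems, and Stage~3 reduces to a standard Schr\"oder recurrence.
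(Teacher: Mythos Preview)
Your Stages~1 and~3 coincide with the paper's argument: Stage~1 is the straightforward combination of Theorems~\ref{permverts} and~\ref{simpleverts}, and your Schr\"oder recurrence for laminar families is exactly the one the paper derives for $|{\bf R}_s|$.

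The genuine difference is in Stage~2. The paper proves only your ``$\Leftarrow$'' direction---that a violating pair $(i,j)$ forces $\Psi^{-1}(w)$ to contain two properly overlapping segments---and it does so not through the forward description of $\Psi$ in Lemma~\ref{injection} but through the explicit $\Psi^{-1}$ algorithm of Lemma~\ref{bijection}: tracing the longest segments with right end $j$ and left end $i{+}1$ that the algorithm inserts shows they must overlap. The paper then \emph{never} proves your ``$\Rightarrow$'' direction directly; instead it observes that the set of laminar $E$ and the set of $w$ satisfying the implication both have cardinality equal to the Schr\"oder number (the latter by citing~\cite{CFR2}), so the injection given by one direction must be a bijection.

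Your plan thus trades an external enumeration result for a second combinatorial argument. That is a legitimate route, but your sketch of ``$\Rightarrow$'' is the weakest point: deducing the precise inequalities $w^{-1}(j_1)\le i_2-1$ and $w^{-1}(i_2)>j_1$ from the saturation equalities and the weight identity is not as immediate as you suggest, since those data encode $w$ only implicitly through the bouncing procedure $E(i)$ in Lemma~\ref{injection}. If you pursue this, working with the $\Psi^{-1}$ algorithm (as the paper does for the other direction) is likely to give cleaner control over $w^{-1}$ than the forward map. The payoff of your approach would be a self-contained proof that does not rely on the count from~\cite{CFR2}; the paper's approach is shorter but imports that count.
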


\begin{example}
Obviously, when n=3, if two segments in $R$ intersect by more than a point, then one of them contains the other. Therefore, all the six vertices shown in Example~\ref{permex} are simple. This corresponds to the fact that the 2nd large Schr\"oder number is equal to 6. Examples of non-simple permutation vertices first appear for $n=4$. The 3rd large Schr\"oder number is equal to 22, therefore, for a regular $\la$ there are precisely two such vertices. For $\la=(a_1,a_2,a_3)=(1,1,1)$ they are as follows.\vspace{5mm}\\
$E=\{[2,3],[1,3],[2,4]\}$\hfill$E=\{[2,3],[1,3],[2,4],[1,4]\}$\vspace{2mm}\\
$x(E)=$\vspace{2mm}
\begin{tabular}{ccccc}
0&&1&& 0\\
&1&&1&\\
&&0 &&
\end{tabular}
\hfill
$x(E)=$
\begin{tabular}{ccccc}
0&&1&& 0\\
&1&&1&\\
&&1 &&
\end{tabular}\\
$w(E)=(2\:4)(1\:3)(2\:3)=[3\:1\:4\:2]$\hfill$w(E)=(1\:4)(2\:4)(1\:3)(2\:3)=[3\:4\:1\:2]$\\

By taking $i=1$ and $j=3$ one may verify that the two above $w(E)$ do not indeed possess the property described in Theorem~\ref{simpleperm}.
\end{example}

\begin{remark}
It has been proved in \cite{CFR2} that permutations $w$ satisfying the property from Theorem \ref{simpleperm}
correspond to the smooth torus fixed points in the degenerate flag varieties \cite{Fe1,Fe2,CFR2}.
So Theorem \ref{simpleperm} implies that the flat degeneration of the degenerate flag variety to the toric variety 
with the Newton polytope $P_\la$ (see \cite{FFL3}) provides a one-to-one correspondence between the smooth torus fixed points
and simple vertices of $P_\la$.  
\end{remark}

\section{The set of all vertices}\label{allsec}

The goal of this section is to describe the set of vertices of a polytope of the form $P_{a_i\omega_i+a_{i+1}\omega_{i+1}}$ and then prove Theorem~\ref{allverts}. From now and for the majority of this section we work in the assumption that $\lambda$ is regular, i.e. all $a_i$ are positive. After the case of a regular $\lambda$ has been considered, the case of a singular $\lambda$ can be dealt with easily which will be done in the end of this section.

Let us introduce a partial ordering of the set of pairs $(i,j)$ with $1\le i<j\le n$. We write $(i_1,j_1)\preceq (i_2,j_2)$ whenever $i_1\le i_2$ and $j_1\le j_2$. We will use $\prec$ to denote the corresponding strict relation. In particular, one sees that Dyck paths form chains in this partial order and there exists a Dyck path passing through both $(i_1,j_1)$ and $(i_2,j_2)$ if and only if they are comparable with respect to $\preceq$.

First, for an $1\le i\le n-1$ we discuss vertices of the polytope $P_{a_i\omega_i}=P_i$. Let $Q_i$ denote the set of pairs $(k,l)$ with $1\le k\le i$ and $i+1\le l\le n$.
\begin{proposition}\label{simproot}
A point constitutes a vertex of $P_i$ if and only if it has the form $a_i\chi_A$, where $\chi_A$ is the indicator function of an $\preceq$-antichain $A\subset Q_i$.
\end{proposition}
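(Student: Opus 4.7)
The approach is to recognize $P_i$ as the chain polytope of the finite poset $(Q_i,\preceq)$ rescaled by $a_i$, and then invoke Stanley's classical description of the vertices of chain polytopes as indicator vectors of antichains.

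The reduction splits into two observations. First, whenever $(k,l)\notin Q_i$ --- that is, $k>i$ or $l\le i$ --- one can find a Dyck path $d$ through $(k,l)$ whose first coordinates all differ from $i$; for instance, $(k,k+1)\to(k,k+2)\to\cdots\to(k,l)\to(k+1,l)\to\cdots\to(l-1,l)$ has first coordinates in $[k,l-1]$, which excludes $i$ in both scenarios. Then $M(a_i\omega_i,d)=0$, so combined with nonnegativity this forces $x_{k,l}=0$ for every $x\in P_i$. Second, as noted earlier in the section, $d\cap Q_i$ is automatically a $\preceq$-chain for any Dyck path $d$, and conversely every $\preceq$-chain $C\subset Q_i$ lies inside a Dyck path obtained by linking successive elements of $C$ via down-right and up-right moves and capping off at the top row at both ends. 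Consequently the defining inequalities of $P_i$ simplify to $x\ge 0$, $x_{k,l}=0$ off $Q_i$, and $\sum_{c\in C}x_c\le a_i$ for every $\preceq$-chain $C\subset Q_i$ --- precisely the definition of $a_i$ times the chain polytope of $(Q_i,\preceq)$.

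From this description the forward direction is direct: $a_i\chi_A$ with $A$ an antichain satisfies every chain inequality because a chain meets an antichain in at most one element. It is an honest vertex because for each $(k,l)\in A$ the path $d^{k,l}\cap Q_i$ consists of cells comparable to $(k,l)$, so by the antichain property it meets $A$ only at $(k,l)$ itself; the tight chain inequality from this path together with the constraints $x=0$ off $A$ pin down $x_{k,l}=a_i$, yielding $\binom{n}{2}$ linearly independent tight constraints at $a_i\chi_A$.

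For the converse I would cite Stanley's theorem directly. The main obstacle in a fully self-contained proof would be showing that every vertex has coordinates in $\{0,a_i\}$; once that is in place, the antichain property follows by applying the two-element chain inequality $v_c+v_{c'}\le a_i$ to any comparable pair $c,c'$ in the support. Stanley's own proof handles this $\{0,a_i\}$-integrality cleanly via the transfer map to the order polytope, whose vertices are straightforwardly indicator functions of order filters.
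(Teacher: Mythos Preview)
Your proposal is correct and follows essentially the same approach as the paper: both identify $P_i$ (after rescaling by $a_i$) with the chain polytope of $(Q_i,\preceq)$ and then invoke Stanley's Theorem~2.2 to obtain the vertex description. You spell out in more detail why the Dyck-path inequalities collapse to chain inequalities on $Q_i$ and give an explicit verification of the forward direction, but the paper is content to state these reductions and cite Stanley directly.
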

\begin{proof}
Since $P_i=a_i P_{\omega_i}$, it suffices to prove our assertion in the case $a_i=1$.

It is easy to see that for any $x\in P_{\omega_i}$ and any $(k,l)\not\in Q_i$ we have $x_{k,l}=0$. Consequently, $P_{\omega_i}$ can be viewed as a polytope in $\mathbb R^{Q_i}$. However, as a subset of $\mathbb R^{Q_i}$ the polytope $P_{\omega_i}$ is precisely the chain polytope of the poset $(Q_i,\preceq)$ as defined in~\cite{stan}. The proposition now follows as a direct application of Theorem 2.2 from~\cite{stan}.
\end{proof}

We move on to the characterization of the set of vertices of $$P_{a_i\omega_i+a_{i+1}\omega_{i+1}}=P_{i,i+1}$$ for an $1\le i\le n-2$. Every vertex of $P_{i,i+1}=P_i+P_{i+1}$ can be uniquely expressed as $v_i+v_{i+1}$ for vertices $v_i$ of $P_i$ and $v_{i+1}$ of $P_{i+1}$. Consequently, in accordance with Proposition~\ref{simproot}, a vertex of $P_{i,i+1}$ can be uniquely expressed as $a_i\chi_{A_i}+a_{i+1}\chi_{A_{i+1}}$ for $\preceq$-antichains $A_i\subset Q_i$ and $A_{i+1}\subset Q_{i+1}$.

We choose an arbitrary pair of antichains $A_i\subset Q_i$ and $A_{i+1}\subset Q_{i+1}$ and determine when $a_i\chi_{A_i}+a_{i+1}\chi_{A_{i+1}}$ constitutes a vertex of $P_{i,i+1}$. To do so, let us introduce a graph $\Gamma$ whose set of nodes is $A_i\cup A_{i+1}$. In this graph $(k_1,l_1)$ and $(k_2,l_2)$ are adjacent if and only if they are distinct and comparable with respect to $\preceq$. In other words, $\Gamma$ is the Hasse diagram of $(A_i\cup A_{i+1},\preceq)$, since the height of this latter poset is no greater than 2.

Now note that for any $$(k_1,l_1)\in Q_i\backslash Q_{i+1}=\{(\cdot,i+1)\}$$ and $$(k_2,l_2)\in Q_{i+1}\backslash Q_i=\{(i+1,\cdot)\}$$ we have $(k_1,l_1)\preceq (k_2,l_2)$. Moreover, being an antichain, $A_i$ contains at most one element from $Q_i\backslash Q_{i+1}$; similarly, $A_{i+1}$ contains at most one element from $Q_{i+1}\backslash Q_i$. This means that $\Gamma$ either has no nodes lying outside of $Q_i\cap Q_{i+1}$ or all (either one or two) of its nodes outside of $Q_i\cap Q_{i+1}$ belong to the same connected component of $\Gamma$. In the former case we define $\Delta=\varnothing$ and in the latter we define $\Delta\subset A_i\cup A_{i+1}$ to be the set of that connected component's nodes.

\begin{proposition}\label{tworoots}
In the above notation, the point $$v=a_i\chi_{A_i}+a_{i+1}\chi_{A_{i+1}}$$ constitutes a vertex of $P_{i,i+1}$ if and only if for any $(k,l)\in A_i\cup A_{i+1}$ we either have $(k,l)\in A_i\cap A_{i+1}$ or $(k,l)\in\Delta$.
\end{proposition}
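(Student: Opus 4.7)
The plan is to use the standard criterion for vertices of a Minkowski sum: $v_1+v_2$ is a vertex of $P_i+P_{i+1}$ if and only if there exists a single linear functional $\ell$ on $U$ that is simultaneously uniquely maximized at $v_1$ on $P_i$ and at $v_2$ on $P_{i+1}$. Since each $P_i$ is, up to scaling by $a_i$, the chain polytope of $(Q_i,\preceq)$, finding such an $\ell=\sum c_{k,l}x_{k,l}$ is equivalent to finding a single assignment of weights $c_{k,l}$ under which $A_i$ is the unique maximum-weight antichain in $(Q_i,\preceq)$ and $A_{i+1}$ is the unique maximum-weight antichain in $(Q_{i+1},\preceq)$. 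The proposition asserts precisely when this simultaneous unique maximization can be achieved.

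For necessity I would argue by contraposition. Suppose some $(k,l)\in A_i\cup A_{i+1}$ lies outside $A_i\cap A_{i+1}$ and outside $\Delta$. Any node of $\Gamma$ outside $Q_i\cap Q_{i+1}$ belongs to $\Delta$ by definition, so $(k,l)\in Q_i\cap Q_{i+1}$, and the same holds for its entire $\Gamma$-component $C$. I would then exhibit $v$ as a non-trivial convex combination via a swap along $C$: put
\[
A_i'=(A_i\setminus(C\cap A_i))\cup(C\cap A_{i+1}),\qquad A_{i+1}'=(A_{i+1}\setminus(C\cap A_{i+1}))\cup(C\cap A_i).
\]
Because $\Gamma$ is bipartite between $A_i\setminus A_{i+1}$ and $A_{i+1}\setminus A_i$ (with elements of $A_i\cap A_{i+1}$ isolated) and records the cover relations of a poset of height at most $2$, the fact that $C$ is a full connected component forces every element of $A_i\setminus(C\cap A_i)$ to be $\preceq$-incomparable with every element of $C\cap A_{i+1}$. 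Together with $C\subseteq Q_i\cap Q_{i+1}$ this shows $A_i'$ is a $\preceq$-antichain in $Q_i$ and, symmetrically, $A_{i+1}'$ is one in $Q_{i+1}$. Setting $u_3=a_i\chi_{A_i'}+a_{i+1}\chi_{A_{i+1}}$ and $u_4=a_i\chi_{A_i}+a_{i+1}\chi_{A_{i+1}'}$ yields two distinct points of $P_{i,i+1}$ with
\[
v=\tfrac{a_{i+1}}{a_i+a_{i+1}}u_3+\tfrac{a_i}{a_i+a_{i+1}}u_4,
\]
both coefficients lying in $(0,1)$ by regularity of $\lambda$, so $v$ is not a vertex.

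For sufficiency I would construct $\ell$ directly. Take $c_{k,l}$ to be a very large positive constant on $A_i\cap A_{i+1}$ and a very large negative constant off $A_i\cup A_{i+1}$, forcing any antichain competing with $A_i$ in $Q_i$ or with $A_{i+1}$ in $Q_{i+1}$ to contain all of $A_i\cap A_{i+1}$ and to avoid the complement of $A_i\cup A_{i+1}$. The task then reduces to choosing $c_{k,l}$ on $\Delta$ so that $A_i\cap\Delta$ uniquely maximizes the $c$-sum among antichains of $\Delta\cap Q_i$, and $A_{i+1}\cap\Delta$ does so among antichains of $\Delta\cap Q_{i+1}$. The main obstacle lies here: one must rule out not only single-element swaps but every alternative antichain on either side. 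This is where the hypothesis becomes essential: $\Delta$ is a connected bipartite graph containing at least one outside node, that is, a node lying in only one of $Q_i,Q_{i+1}$, and such a node contributes to only one of the two chain-polytope problems. Assigning it a very large coefficient affects only that side, after which the remaining $c_{k,l}$ on $\Delta$ can be propagated by, for instance, a breadth-first traversal rooted at an outside node, producing strictly alternating values along $\Delta$ that make $A_i\cap\Delta$ and $A_{i+1}\cap\Delta$ the unique maximizers in their respective posets.
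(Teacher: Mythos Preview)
Your necessity argument is correct and is essentially the paper's swap argument, repackaged as an explicit convex combination (the paper instead defines $x=a_i\chi_{A'_i}+a_{i+1}\chi_{A'_{i+1}}$ and observes that $x$ satisfies the same tight constraints as $v$, so either $v$ is not a vertex or $v$ admits two Minkowski decompositions). Your formulation with $u_3,u_4$ and the coefficients $a_{i+1}/(a_i+a_{i+1})$, $a_i/(a_i+a_{i+1})$ is a nice direct variant.

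Your sufficiency argument takes a genuinely different route. The paper does not construct a linear functional; it shows directly that the system of tight constraints at $v$ (the vanishing coordinates together with the Dyck-path equalities $S(x,d)=M(a_i\omega_i+a_{i+1}\omega_{i+1},d)$) has $v$ as its unique solution. The key step is an induction on the distance in $\Gamma$ from an outside node $(k_0,l_0)\in\Delta\setminus(Q_i\cap Q_{i+1})$: one Dyck path through $(k_0,l_0)$ pins down $x_{k_0,l_0}$, and then for each subsequent $(k,l)\in\Delta$ a Dyck path through $(k,l)$ and its BFS predecessor pins down $x_{k,l}$. This is clean because each inductive step uses one equation to determine one coordinate.

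Your dual approach via a supporting functional is legitimate, and the BFS idea can be made to work: for instance, setting $c_v=N^{-d(v)}$ with $d$ the BFS distance from an outside node $p$ and $N>|\Delta|$ forces any maximum-weight independent set in $\Gamma|_\Delta$ to contain $p$, hence no level~$1$, hence all of level~$2$, and so on, yielding exactly $A_i\cap\Delta$; removing $p$ and repeating the argument from level~$1$ gives $A_{i+1}\cap\Delta$. However, as written your sketch stops at ``strictly alternating values along $\Delta$'', which is not a construction and does not by itself rule out competing independent sets when $\Gamma|_\Delta$ contains cycles. That step needs to be made explicit for the argument to be complete.
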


\begin{example}\label{allex}
Before proving the proposition we illustrate it with several examples. We set n=5 and for each $1\le i\le 4$ we provide an example of an antichain $A_i\subset Q_i$ by displaying the corresponding number triangle $\chi_{A_i}$. The elements outside of the corresponding $Q_i$ are marked with an ``$\cdot$''.\vspace{2mm}\\
\small
\begin{tabular}{p{0.5mm}p{0.5mm}p{0.5mm}p{0.5mm}p{0.5mm}p{0.5mm}p{0.5mm}}
&&&{\normalsize $A_1$}&&&\vspace{1mm}\\
0 &&$\cdot$&& $\cdot$ && $\cdot$\\
&1 && $\cdot$&& $\cdot$ &\\
&&0&& $\cdot$& &\\
&&&0 &&&
\end{tabular}
\hspace{2mm}
\begin{tabular}{p{0.5mm}p{0.5mm}p{0.5mm}p{0.5mm}p{0.5mm}p{0.5mm}p{0.5mm}}
&&&\normalsize{$A_2$}&&&\vspace{1mm}\\
$\cdot$ &&0&& $\cdot$ && $\cdot$\\
&0 && 1&& $\cdot$ &\\
&&0&& 0& &\\
&&&1 &&&
\end{tabular}
\hspace{2mm}
\begin{tabular}{p{0.5mm}p{0.5mm}p{0.5mm}p{0.5mm}p{0.5mm}p{0.5mm}p{0.5mm}}
&&&\normalsize{$A_3$}&&&\vspace{1mm}\\
$\cdot$ &&$\cdot$&& 1 && $\cdot$\\
&$\cdot$&& 0&& 0 &\\
&&0&& 0& &\\
&&&1 &&&
\end{tabular}
\hspace{2mm}
\begin{tabular}{p{0.5mm}p{0.5mm}p{0.5mm}p{0.5mm}p{0.5mm}p{0.5mm}p{0.5mm}}
&&&\normalsize{$A_4$}&&&\vspace{1mm}\\
$\cdot$ &&$\cdot$&& $\cdot$ && 0\\
&$\cdot$&& $\cdot$&& 1 &\\
&&$\cdot$&& 0& &\\
&&&0 &&&
\end{tabular}
\vspace{2mm}

\normalsize
Let us display the three corresponding graphs~$\Gamma$ with the large dots marking the vertices of $\Gamma$, the smaller dots marking pairs $(i,j)$ not in $\Gamma$ and the component $\Delta$ labeled.
 
\begin{picture}(40,35)
\put(0,30){\circle*{0.5}}
\put(10,30){\circle*{0.5}}
\put(20,30){\circle*{0.5}}
\put(30,30){\circle*{0.5}}
\put(5,25){\circle*{1.5}}
\put(15,25){\circle*{1.5}}
\put(25,25){\circle*{0.5}}
\put(10,20){\circle*{0.5}}
\put(20,20){\circle*{0.5}}
\put(15,15){\circle*{1.5}}
\put(5,25){\line(1,0){10}}
\put(5,25){\line(1,-1){10}}
\put(4,19){$\Delta$}
\end{picture}
\begin{picture}(40,35)
\put(0,30){\circle*{0.5}}
\put(10,30){\circle*{0.5}}
\put(20,30){\circle*{1.5}}
\put(30,30){\circle*{0.5}}
\put(5,25){\circle*{0.5}}
\put(15,25){\circle*{1.5}}
\put(25,25){\circle*{0.5}}
\put(10,20){\circle*{0.5}}
\put(20,20){\circle*{0.5}}
\put(15,15){\circle*{1.5}}
\put(20,30){\line(-1,-1){5}}
\put(13,28){$\Delta$}
\end{picture}
\begin{picture}(40,35)
\put(0,30){\circle*{0.5}}
\put(10,30){\circle*{0.5}}
\put(20,30){\circle*{1.5}}
\put(30,30){\circle*{0.5}}
\put(5,25){\circle*{0.5}}
\put(15,25){\circle*{0.5}}
\put(25,25){\circle*{1.5}}
\put(10,20){\circle*{0.5}}
\put(20,20){\circle*{0.5}}
\put(15,15){\circle*{1.5}}
\put(20,30){\line(1,-1){5}}
\put(15,15){\line(1,1){10}}
\put(22,18){$\Delta$}
\end{picture}\vspace{-1.2cm}
We see that we have chosen the $A_i$ in such a way that, by Proposition~\ref{tworoots}, for each $1\le i\le 3$ the point $a_i\chi_{A_i}+a_{i+1}\chi_{A_{i+1}}$ is indeed a vertex of $P_{i,i+1}$. To produce an example of a non-vertex we consider $A'_2$ given by
\begin{center}
\begin{tabular}{ccccccc}
$\cdot$ &&0&& $\cdot$ && $\cdot$\\
&0 && 0&& $\cdot$ &\\
&&0&& 1& &\\
&&&0&&&
\end{tabular}
\end{center}
The graph $\Gamma$ corresponding to $A'_2$ and $A_3$ will then have the form

\begin{center}
\begin{picture}(40,35)
\put(0,30){\circle*{0.5}}
\put(10,30){\circle*{0.5}}
\put(20,30){\circle*{1.5}}
\put(30,30){\circle*{0.5}}
\put(5,25){\circle*{0.5}}
\put(15,25){\circle*{0.5}}
\put(25,25){\circle*{0.5}}
\put(10,20){\circle*{0.5}}
\put(20,20){\circle*{1.5}}
\put(15,15){\circle*{1.5}}
\put(20,20){\line(-1,-1){5}}
\put(16,28){$\Delta$}
\end{picture}
\end{center}
\vspace{-1.2cm}
We see that it has two nodes outside of both $\Delta$ and $A'_2\cap A_3$.
\end{example}

\begin{proof}[Proof of Proposition~\ref{tworoots}]
First we prove the ``if'' part. The point $v$ is obviously contained in $P_{i,i+1}$. We are to show that if a point $x\in P_{i,i+1}$ satisfies 
\begin{enumerate}[label=\arabic*)]
\item $x_{k,l}=0$ whenever $v_{k,l}=0$ (i.e. $(k,l)\not\in A_i\cup A_{i+1}$) and 
\item $S(x,d)=S(v,d)$ for every Dyck path $d$ such that $$S(v,d)=M(a_i\omega_i+a_{i+1}\omega_{i+1},d),$$ 
\end{enumerate}
then $x=v$.

Indeed, consider such a point $x$. All of its coordinates outside of $A_i\cup A_{i+1}$ coincide with those of $v$ by assumption, consider some $(k,l)\in A_i\cup A_{i+1}$. 

If $(k,l)\not\in\Delta$, we have $(k,l)\in A_i\cap A_{i+1}$ and $v_{k,l}=a_i+a_{i+1}$. For a Dyck path $d$ containing $(k,l)$ we have $$M(a_i\omega_i+a_{i+1}\omega_{i+1},d)=a_i+a_{i+1}$$ since $(k,l)\in Q_i\cap Q_{i+1}$. We also have $x_{k',l'}=0$ for any $(k',l')\in d$ other than $(k,l)$ due to property 1) above. That is since $(k',l')$ and $(k,l)$ are $\preceq$-comparable which means that $(k',l')$ may not lie in $A_i\cup A_{i+1}$. In view of property 2) we must now have $$x_{k,l}=a_i+a_{i+1}=v_{k,l}.$$

Suppose we have $(k,l)\in \Delta$. We choose an element $$(k_0,l_0)\in\Delta\backslash(Q_i\cap Q_{i+1})$$ and prove that $x_{k,l}=v_{k,l}$ by induction on the distance between $(k,l)$ and $(k_0,l_0)$ in $\Gamma$. To obtain the base consider the path $d^{k_0,l_0}$. If $l_0=i+1$ we have $$v_{k_0,l_0}=a_i=M(a_i\omega_i+a_{i+1}\omega_{i+1},d^{k_0,l_0}).$$ This means that $d^{k_0,l_0}$ may not intersect $A_i\cup A_{i+1}$ in any elements other than $(k_0,l_0)$. From properties 1) and 2) we deduce $$x_{k_0,l_0}=M(a_i\omega_i+a_{i+1}\omega_{i+1},d^{k_0,l_0})=v_{k_0,l_0}.$$ The case $k_0=i+1$ is considered analogously.

Let us move on to the step of our induction. We may choose $(k',l')\in\Delta$ with $$\dist((k_0,l_0),(k',l'))=\dist((k_0,l_0),(k,l))-1$$ and adjacent to $(k,l)$. The elements $(k,l)$ and $(k',l')$ are $\preceq$-comparable, consider a Dyck path $d$ containing both of them. We have $$v_{k,l}+v_{k',l'}=a_i+a_{i+1}=M(a_i\omega_i+a_{i+1}\omega_{i+1},d).$$ Consequently, all elements of $d$ other than $(k,l)$ and $(k',l')$ lie outside of $A_i\cup A_{i+1}$. Properties 1) and 2) then imply $$x_{k,l}+x_{k',l'}=M(a_i\omega_i+a_{i+1}\omega_{i+1},d)=v_{k,l}+v_{k',l'}$$ and we are left to employ the induction hypothesis for $(k',l')$.

To prove the ``only if'' part suppose that some $(k,l)$ is contained in exactly one of $A_i$ and $A_{i+1}$ and not contained in $\Delta$. Then the connected component $K$ of $\Gamma$ containing $(k,l)$ lies wholly within $Q_i\cap Q_{i+1}$. Define $$A'_i=(A_i\backslash K)\cup(A_{i+1}\cap K)$$ and $$A'_{i+1}=(A_{i+1}\backslash K)\cup(A_{i}\cap K),$$ i.e. define new antichains by transferring all elements in $K$ from $A_i$ to $A_{i+1}$ or vice versa. 

Let $x=a_i\chi_{A'_i}+a_{i+1}\chi_{A'_{i+1}}$. It is easy to see that $x$ possesses properties 1) and 2) above. As we have shown, this means that either $v$ is not a vertex or we have $x=v$. In the latter case, however, the point $v$ can be expressed as a sum of points in $P_i$ and $P_{i+1}$ in two different ways which, again, shows that $v$ is not a vertex.
\end{proof}

\begin{cor}\label{totheleft}
Let antichains $A_i\subset Q_i$, $A_{i+1}\subset Q_{i+1}$ be such that $a_i\chi_{A_i}+a_{i+1}\chi_{A_{i+1}}$ constitutes a vertex of $P_{i,i+1}$. Then for any two $\preceq$-comparable elements $(k_1,l_1)\in A_i$ and $(k_2,l_2)\in A_{i+1}$ we have $(k_1,l_1)\preceq (k_2,l_2)$.
\end{cor}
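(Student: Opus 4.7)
I plan to prove the contrapositive: assume $(k_1,l_1)\in A_i$ and $(k_2,l_2)\in A_{i+1}$ are $\preceq$-comparable with $(k_2,l_2)\prec(k_1,l_1)$ strict, and show $v=a_i\chi_{A_i}+a_{i+1}\chi_{A_{i+1}}$ is not a vertex of $P_{i,i+1}$, contradicting the hypothesis.

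First I record basic facts. From $k_1\le i$ (since $(k_1,l_1)\in Q_i$), $l_2\ge i+2$ (since $(k_2,l_2)\in Q_{i+1}$), $k_2\le k_1$, and $l_1\ge l_2$, both points lie in $Q_i\cap Q_{i+1}$. The antichain properties of $A_i$ and $A_{i+1}$ force $(k_2,l_2)\notin A_i$ (else $(k_2,l_2)\prec(k_1,l_1)$ would give comparable elements inside $A_i$) and similarly $(k_1,l_1)\notin A_{i+1}$, so both points lie in $A_i\triangle A_{i+1}$.

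Next I apply Proposition~\ref{tworoots}: were $v$ a vertex, the two points, being outside $A_i\cap A_{i+1}$, would both have to lie in $\Delta$. Hence $\Delta$ is nonempty and so contains some element outside $Q_i\cap Q_{i+1}$; by the symmetry between $A_i$ and $A_{i+1}$, suppose some $(k_0,i+1)\in A_i\setminus A_{i+1}$ lies in $\Delta$. The antichain of $A_i$ applied to $(k_0,i+1)$ and $(k_1,l_1)$ (comparable unless $k_0>k_1$, since $l_1\ge i+2>i+1$) forces $k_0>k_1$.

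The crux of the proof, and the main technical obstacle, is to derive a contradiction from the existence of an edge-path in $\Gamma$ connecting $(k_0,i+1)$ to $(k_1,l_1)$. I plan to trace a shortest such path $X_0=(k_0,i+1),Y_1,X_1,Y_2,\dots,X_m=(k_1,l_1)$ alternating between $A_i$-side and $A_{i+1}$-side vertices, and show by induction that every $A_i$-side vertex $X_j$ with $j\ge 1$ has first coordinate strictly greater than $k_1$. In the base case, $Y_1\succeq X_0$ gives $Y_{1,x}\ge k_0>k_2$ and hence $Y_{1,y}<l_2\le l_1$ by the antichain of $A_{i+1}$ with $(k_2,l_2)$; combining this with antichain of $A_i$ between $X_1$ and $(k_1,l_1)$ (in either subcase $X_1\preceq Y_1$ or $Y_1\preceq X_1$) yields $X_{1,x}>k_1$. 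The inductive step propagates these inequalities along the alternation, using the antichain conditions with $(k_0,i+1),(k_1,l_1),(k_2,l_2)$ simultaneously and the shortest-path assumption to dispose of degenerate subcases. Applying the conclusion to $X_m=(k_1,l_1)$ yields $k_1>k_1$, the desired contradiction; the symmetric case of an outside element in $A_{i+1}\setminus A_i$ is handled identically.
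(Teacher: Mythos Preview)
Your reduction via Proposition~\ref{tworoots} to the case where $(k_1,l_1),(k_2,l_2)\in\Delta$ is correct, as is the base case of your induction along a shortest path. The gap is in the inductive step. With only the invariant $X_{j,x}>k_1$ and the antichain conditions against the three \emph{fixed} points $(k_0,i+1),(k_1,l_1),(k_2,l_2)$ plus shortest-path, you cannot push through the case $Y_{j+1}\prec X_j$ together with $X_{j+1}\succ Y_{j+1}$: here you only obtain $X_{j+1,x}\ge Y_{j+1,x}$ with no lower bound on $Y_{j+1,x}$, and none of your three reference points excludes this configuration. The ``shortest-path assumption'' you invoke does not do the job either.

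What is missing is the observation that this bad case never arises. Since $X_0=(k_0,i+1)$ is $\preceq$-minimal in $A_i\cup A_{i+1}$, a one-line induction using the antichain property between \emph{consecutive} path vertices (not your three fixed points) gives $X_0\prec Y_1$, hence $X_1\prec Y_1$ (else $X_0\prec Y_1\prec X_1$ inside $A_i$), hence $X_1\prec Y_2$ (else $Y_2\prec X_1\prec Y_1$ inside $A_{i+1}$), and so on. Thus every $X_j$ is minimal in $A_i\cup A_{i+1}$; applying this to $X_m=(k_1,l_1)$ contradicts $(k_2,l_2)\prec(k_1,l_1)$ immediately, and the coordinate tracking becomes superfluous.

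The paper's proof takes a different and shorter route: it argues directly by induction on $\min\big(\dist((k_0,l_0),(k_1,l_1)),\dist((k_0,l_0),(k_2,l_2))\big)$. At the step, it chooses a neighbour $(k',l')\in A_{i+1}$ of the closer of the two points, applies the induction hypothesis to that adjacent pair to get $(k_1,l_1)\preceq(k',l')$, and observes that $(k_2,l_2)\prec(k_1,l_1)$ would then force $(k_2,l_2)\prec(k',l')$ inside the antichain $A_{i+1}$.
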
 
\begin{proof}
From Proposition~\ref{tworoots} we deduce that in this case we must either have $(k_1,l_1)=(k_2,l_2)$ or $(k_1,l_1),(k_2,l_2)\in\Delta$. The former case is trivial, to deal with the latter we consider the element $(k_0,l_0)$ from the proof of Proposition~\ref{tworoots} and proceed by induction on the quantity $$\min\nolimits\big(\dist((k_0,l_0),(k_1,l_1)),\dist((k_0,l_0),(k_2,l_2))\big).$$ (As before we are considering distances in the graph $\Gamma$.)

As the base of our induction we either have $(k_1,l_1)=(k_0,l_0)$ or $(k_2,l_2)=(k_0,l_0)$ and the corollary 
follows from our choice of $(k_0,l_0)$.

For the step, suppose that $$\dist((k_0,l_0),(k_1,l_1))\le \dist((k_0,l_0),(k_2,l_2)).$$ Then we may choose an element $(k',l')\in A_i\cup A_{i+1}$ adjacent to $(k_1,l_1)$ and such that $$\dist((k_0,l_0),(k',l'))=\dist((k_0,l_0),(k_1,l_1))-1.$$ We must have $(k',l')\in A_{i+1}$ and, by the induction hypothesis, $(k_1,l_1)\preceq(k',l')$. Now we see that $(k_1,l_1)\succ(k_2,l_2)$ would lead to $(k_2,l_2)\prec(k',l')$ which is impossible. The case $$\dist((k_0,l_0),(k_1,l_1))>\dist((k_0,l_0),(k_2,l_2))$$ is considered analogously.
\end{proof}

We move on to the proof of Theorem~\ref{allverts}. Every vertex of $P_\lambda$ can be uniquely expressed as $a_1\chi_{A_1}+\ldots+a_{n-1}\chi_{A_{n-1}}$ for antichains $A_1\subset Q_1,\ldots, A_{n-1}\subset Q_{n-1}$. We consider an arbitrary sequence of antichains $(A_i\subset Q_i,1\le i\le n-1)$ and the point $$v=a_1\chi_{A_1}+\ldots+a_{n-1}\chi_{A_{n-1}}.$$ Let us make a simple observation.
\begin{proposition}\label{goodpath}
In this notation, for a Dyck path $$d=((i_1,j_1),\ldots,(i_N,j_N))$$ we have $S(v,d)=M(\lambda,d)$ if and only if $d$ intersects each of the antichains $A_{i_1},\ldots,A_{i_N}$ in one element. 
\end{proposition}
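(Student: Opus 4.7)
The plan is to reduce the claim to an elementary counting identity. First I would substitute $v_{k,l}=\sum_m a_m\chi_{A_m}(k,l)$ into the definition of $S(v,d)$ and exchange the order of summation to obtain
$$S(v,d)=\sum_{t=1}^N v_{i_t,j_t}=\sum_m a_m\,|d\cap A_m|,$$
where $d$ is viewed as the set of its pairs (which are pairwise distinct since each step of the path strictly increases either $i$ or $j$).

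Next, two simple observations control each summand on the right. Since $A_m\subset Q_m$ is a $\preceq$-antichain whereas $d$ is a $\preceq$-chain, we have $|d\cap A_m|\le 1$ for every $m$. Moreover, $d\cap Q_m$ is empty whenever $m<i_1$ or $m>i_N$: in the first case every $i_t\ge i_1>m$, so no pair on $d$ lies in $Q_m$; in the second case every $j_t\le j_N=i_N+1\le m$. Conversely, if $i_1\le m\le i_N$, then the weakly increasing sequence $i_1,\ldots,i_N$ must take the value $m$ at some index $t$, because its jumps are of size at most one, and then $(i_t,j_t)\in Q_m$.

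Combining these observations gives
$$S(v,d)=\sum_{m=i_1}^{i_N}a_m\,|d\cap A_m|\le\sum_{m=i_1}^{i_N}a_m=M(\lambda,d),$$
with equality if and only if $|d\cap A_m|=1$ for every $m$ with $i_1\le m\le i_N$, equivalently for every $m$ appearing in the list $i_1,\ldots,i_N$. This is exactly the condition in the proposition. I do not foresee any real obstacle here: the statement is essentially a bookkeeping identity that repackages the coordinate-level definition of $v$ in a form designed to be used in the sequel.
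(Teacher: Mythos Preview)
Your proposal is correct and follows essentially the same approach as the paper: both arguments observe that $d$ meets each $A_m$ in at most one element (because $d$ is a $\preceq$-chain and $A_m$ an antichain), that $d\cap Q_m=\varnothing$ unless $i_1\le m\le i_N$, and hence that $S(v,d)$ equals the sum of those $a_m$ with $d\cap A_m\neq\varnothing$. Your write-up is simply a more detailed version of the paper's two-sentence proof; note that both rely on the standing regularity assumption $a_m>0$ to turn the inequality into the stated equivalence.
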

\begin{proof}
Visibly, $d$ does not intersect any $Q_i$ with $i<i_1$ or $i>i_N$ and intersects any $A_i$ in at most one element. Therefore, $S(v,d)$ is equal to the sum of $a_i$ over those $i_1\le i\le i_N$ for which $d$ intersects $A_i$. The proposition follows.
\end{proof}

\begin{theorem}[equivalent to Theorem~\ref{allverts}]\label{allequiv}
In the above notation, the point $v$ constitutes a vertex of $P_\lambda$ if and only if for all $1\le i\le n-2$ the point $a_i\chi_{A_i}+a_{i+1}\chi_{A_{i+1}}$ constitutes a vertex of $P_{i,i+1}$.
\end{theorem}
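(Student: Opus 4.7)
The plan is to prove the two directions separately.

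The ``only if'' direction is immediate: since $P_\lambda$ equals the Minkowski sum $P_{i,i+1}+\sum_{l\ne i,i+1}P_l$, and every vertex of a Minkowski sum decomposes uniquely into vertices of the summands, if $v$ is a vertex of $P_\lambda$ then in its decomposition $v=(a_i\chi_{A_i}+a_{i+1}\chi_{A_{i+1}})+\sum_{l\ne i,i+1}a_l\chi_{A_l}$ each summand must itself be a vertex; in particular $a_i\chi_{A_i}+a_{i+1}\chi_{A_{i+1}}$ is a vertex of $P_{i,i+1}$.

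For the ``if'' direction I would use the variational criterion: $v$ is a vertex of $P_\lambda$ if and only if every $x\in P_\lambda$ satisfying $x_{k,l}=0$ whenever $v_{k,l}=0$ and $S(x,d)=M(\lambda,d)$ for every Dyck path $d$ with $S(v,d)=M(\lambda,d)$ necessarily coincides with $v$. Denoting by $\Gamma_i$ and $\Delta_i$ the graph and set from the proof of Proposition~\ref{tworoots} applied to $(A_i,A_{i+1})$, I would fix such an $x$ and establish $x_{k,l}=v_{k,l}$ for every $(k,l)\in B:=\bigcup_j A_j$ by induction on a distance function in a graph obtained by gluing the graphs $\Gamma_i$. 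The inductive step would mirror the proof of Proposition~\ref{tworoots}: for $(k,l)\in A_i\cap A_{i+1}$, which is an isolated node of $\Gamma_i$ by the antichain property, I would seek a Dyck path through $(k,l)$ avoiding the rest of $A_i\cup A_{i+1}$; for $(k,l)\in\Delta_i$ I would propagate the equality along edges of $\Gamma_i$ starting from an anchor element of $\Delta_i\setminus(Q_i\cap Q_{i+1})$, whose base case is handled by the path $d^{k_0,l_0}$ exactly as in Proposition~\ref{tworoots}.

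The main obstacle, genuinely new compared to the local argument, is that by Proposition~\ref{goodpath} a Dyck path is tight at $v$ in $P_\lambda$ only when it intersects $A_j$ for every index $j$ in its range, whereas the proof of Proposition~\ref{tworoots} used paths that are tight at $a_i\chi_{A_i}+a_{i+1}\chi_{A_{i+1}}$ in $P_{i,i+1}$ (which requires intersections only with $A_i$ and $A_{i+1}$). Each Dyck path used in the global induction therefore has to additionally pick up one element of $A_j$ for every $j\notin\{i,i+1\}$ in its range, and the contribution of any such element is under control only because its coordinate will have already been resolved by the inductive hypothesis. Arranging the order in which the elements of $B$ are processed and explicitly constructing the required Dyck paths so that this bookkeeping remains consistent, guided by the monotonicity between consecutive antichains recorded in Corollary~\ref{totheleft}, is the technical core of the proof.
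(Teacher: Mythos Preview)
Your ``only if'' direction and the variational setup for the ``if'' direction match the paper exactly, and you have correctly isolated the real difficulty: in $P_\lambda$ a Dyck path is tight only when it meets \emph{every} $A_j$ in its range (Proposition~\ref{goodpath}), so the two--antichain argument of Proposition~\ref{tworoots} does not transplant directly. However, your last paragraph is not a proof but a promissory note: you assert that an ordering of $B=\bigcup_j A_j$ exists in which every element $(k,l)$ admits a tight Dyck path whose other points of $B$ have all been handled earlier, yet you give no construction of that ordering and no argument that such paths can always be found. This is precisely the step where the argument either succeeds or collapses, so as written the proposal has a genuine gap.

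The paper fills this gap by a reformulation you did not anticipate. Rather than ordering the \emph{points} of $B$, it builds a single directed graph $\Theta$ on $B$ (edges from $(k_1,l_1)\in A_i$ to $(k_2,l_2)\in A_{i+1}$ whenever $(k_1,l_1)\prec(k_2,l_2)$; by Corollary~\ref{totheleft} this is the Hasse diagram of $(B,\preceq)$) and observes that tight Dyck paths correspond exactly to \emph{complete paths} in $\Theta$, i.e.\ paths from a ``source'' (an element $(i,j)\in A_i$) to a ``destination'' (an element $(i,j)\in A_{j-1}$). The problem then becomes purely combinatorial: show that every singleton indicator $\chi_{\{(k,l)\}}$ lies in the linear span of the indicators of complete paths. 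The induction is carried out not over points but over complete paths, with respect to a partial order ``lies above'' (one path lies above another if its segment of indices is contained in the other's and, level by level, it sits in a weakly higher row). The base case is a one--node complete path; for the step, given a complete path $p$ with at least two nodes and any consecutive pair $(k_1,l_1)\prec(k_2,l_2)$ on it, one of four explicit modifications (truncate at a destination, truncate at a source, or reroute via the highest neighbour $\mathfrak r((k_1,l_1))$ or $\mathfrak l((k_2,l_2))$) produces a complete path strictly above $p$ that covers all but at most one node of $p$. Inductively those nodes' indicators are already in the span, and subtracting them from $\chi_p$ yields the last one.

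In short: your diagnosis is right, but the missing ingredient is the switch from inducting on elements of $B$ to inducting on complete paths via the ``lies above'' order, together with the four--case rerouting lemma that drives that induction.
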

\begin{proof}
As we have pointed out, the ``only if'' part follows from general properties of Minkowski sums. We prove the ``if'' part.

Indeed, suppose that for all $1\le i\le n-2$ the point $a_i\chi_{A_i}+a_{i+1}\chi_{A_{i+1}}$ constitutes a vertex of $P_{i,i+1}$. We define a directed graph $\Theta$ whose set of nodes is $\bigcup A_i$ with an edge going from $(k_1,l_1)$ to $(k_2,l_2)$ if and only if for some $i$ we have $(k_1,l_1)\in A_i$, $(k_2,l_2)\in A_{i+1}$ and $(k_1,l_1)\prec (k_2,l_2)$.

As a running example let us consider $n=5$ and $A_i$, $1\le i\le 4$ as in Example~\ref{allex}. The corresponding graph $\Theta$ has the following form (with the nodes enumerated for later use).

\begin{center}
\begin{picture}(40,35)
\put(0,30){\circle*{0.5}}
\put(10,30){\circle*{0.5}}
\put(20,30){\circle*{1.5}}
\put(19,26){4}
\put(30,30){\circle*{0.5}}
\put(5,25){\circle*{1.5}}
\put(4,21){1}
\put(15,25){\circle*{1.5}}
\put(14,21){2}
\put(25,25){\circle*{1.5}}
\put(24,21){5}
\put(10,20){\circle*{0.5}}
\put(20,20){\circle*{0.5}}
\put(15,15){\circle*{1.5}}
\put(14,11){3}
\thicklines
\put(15,25){\vector(1,1){5}}
\put(5,25){\vector(1,0){10}}
\put(5,25){\vector(1,-1){10}}
\put(20,30){\vector(1,-1){5}}
\put(15,15){\vector(1,1){10}}
\end{picture}
\end{center}
\vspace{-11mm}

Next, note that a Dyck path $$d=((i_1,j_1),\ldots,(i_N,j_N))$$ may intersect $A_{i_1}$ only in an element of the form $(i_1,\cdot)$ and may intersect $A_{i_N}$ only in an element of the form $(\cdot,j_N)$. This prompts us to introduce the following terminology: we refer to an element $(i,j)$ (node 
of $\Theta$) as a ``source'' if it belongs to $A_i$ and we refer to it as a ``destination'' if it belongs to $A_j$. In our running example nodes 1, 2 and 4 are sources while nodes 4 and 5 are destinations.

Let us establish several properties of $\Theta$. First of all, from Corollary~\ref{totheleft} we deduce that $\Theta$ is the directed Hasse diagram of $(\bigcup A_i,\preceq)$, i.e. there is an edge going from $(k_1,l_1)$ to $(k_2,l_2)$ if and only if $(k_2,l_2)$ covers $(k_1,l_1)$ in $(\bigcup A_i,\preceq)$. Now consider a path $$p=((i_1,j_1),\ldots,(i_N,j_N))$$ in $\Theta$ starting in a source and ending in a destination (we will refer to such paths as ``complete''). We see that $p$ may be extended to a Dyck path $d$ starting in $(i_1,i_1+1)$ and ending in $(j_N-1,j_N)$. Proposition~\ref{goodpath} implies that we will necessarily have $S(v,d)=M(\lambda,d)$ for any such $d$.

Conversely, if for a Dyck path $d$ we have $S(v,d)=M(\lambda,d)$, then the sequence $d\cap\bigcup A_i$ forms a complete path in $\Theta$. This also follows from Corollary~\ref{totheleft} and Proposition~\ref{goodpath}.

Furthermore, for $(k,l)\in\bigcup A_i$ not a source denote $i_0>k$ the least number such that $(k,l)\in A_{i_0}$. Applying Proposition~\ref{tworoots} to $A_{i_0-1}$ and $A_{i_0}$ we deduce that there is an edge in $\Theta$ going to $(k,l)$. Symmetrically, if $(k,l)$ is not a destination, then there is an edge in $\Theta$ going from $(k,l)$. These two facts imply that every vertex in $\Theta$ lies on a complete path.

The properties of $\Theta$ established in the previous three paragraphs can straightforwardly be verified for our running example.

Now, to prove that $v$ is a vertex we are to show that for a point $x$ satisfying
\begin{enumerate}[label=\arabic*)]
\item $x_{k,l}=0$ whenever $v_{k,l}=0$ (i.e. $(k,l)\not\in\bigcup A_i$) and 
\item $S(x,d)=S(v,d)$ for every Dyck path $d$ such that $S(v,d)=M(\lambda,d)$,
\end{enumerate}
we have $x=v$.

Indeed, consider such a point $x$. For a $(k,l)\in\bigcup A_i$ we are to show that $x_{k,l}=v_{k,l}$. In view of property 2) it suffices to prove that the indicator function $\chi_{\{(k,l)\}}$ may be expressed as a linear combination of the functions $\chi_{d\cap\bigcup A_i}$ over $d$ such that $S(v,d)=M(\lambda,d)$. In other words, we are to prove the following property of graph $\Theta$: the indicator function of any vertex in the graph can be expressed as a linear combination of indicator functions of complete paths.

Our approach to this latter proof is as follows. We introduce an order relation (``lies above'') on the set of paths in $\Theta$ and prove the statement for all $(k,l)\in p$ by induction on complete path $p$ with respect to our order relation. 

For path $p_1$ in $\Theta$ observe that the set of $i$ such that $p_1$ intersects $A_i$ constitutes a segment $[i_1,j_1]$. Consider a second path $p_2$ and the corresponding segment $[i_2,j_2]$. We say that $p_1$ lies above $p_2$ if and only if the following holds. First, we have $[i_1,j_1]\subset[i_2,j_2]$ and, second, for any $i\in[i_1,j_1]$ and elements $(k_1,l_1)=p_1\cap A_i$, $(k_2,l_2)=p_2\cap A_i$ we have $$l_1-k_1\le l_2-k_2,$$ i.e. $(k_1,l_1)$ lies in the higher or same row. (Note that if they lie in the same row then we must have $(k_1,l_1)=(k_2,l_2)$ due to $A_i$ being an antichain.)

We have indeed defined a non-strict partial order relation, we will term the corresponding strict relation ``lies strictly above''. Below, while establishing the induction step we will show that if a complete path $p$ contains at least two nodes, then there exists a complete path lying strictly above $p$. Therefore, minimal elements (lying above any comparable path) in the set of complete paths are those consisting of a single node and the base of our induction is trivial.

Now let us make the following observation. Consider some $(k,l)\in A_i$ and assume that $(k,l)\not\in A_{i-1}$. All nodes $(k',l')$ with an edge going from $(k',l')$ to $(k,l)$ belong to $A_{i-1}$. If $(k,l)$ is not a source, then among such $(k',l')$ we may choose a node for which the difference $l'-k'$ is the least and denote this node $\mathfrak l((k,l))$. In other words, $\mathfrak l((k,l))$ is the unique highest (by row) node with an edge from it to $(k,l)$. 

Next, consider the sequence $$q=\left((k,l),\mathfrak l((k,l)),\mathfrak l(\mathfrak l((k,l))\right),\ldots)$$ ending with its first source. We see that by reversing $q$ we obtain a path $\mathcal L((k,l))$ in $\Theta$ which starts in a source and ends in $(k,l)$ and lies above any other path with these two properties.

Symmetrically, we define the unique highest node $\mathfrak r((k,l))$ with an edge to it from $(k,l)$ (whenever $(k,l)$ is not a destination) and the path $\mathcal R((k,l))$ which starts in $(k,l)$ and ends in a destination and lies above any other path with these two properties.

To illustrate, in our example we have $\mathcal L(x_3)=(x_1,x_3)$ and $\mathcal R(x_3)=(x_3,x_5)$ where $x_i$ denotes the pair corresponding to node $i$. We also have $\mathcal L(x_5)=(x_4,x_5)$, $\mathcal L(x_4)=\mathcal R(x_4)=(x_4)$ and $\mathcal R(x_1)=(x_1,x_2,x_4)$.

We proceed to the induction step. Consider a complete path $p$ of at least two nodes. Let $(k_1,l_1)\prec (k_2,l_2)$ be two consecutive nodes in $p$. We show that there exists a complete path $p'$ lying strictly above $p$ such that one of the two holds. Either $p'$ contains all elements of $p$ preceding or equal to $(k_1,l_1)$ or $p'$ contains all elements of $p$ succeeding or equal to $(k_2,l_2)$.

This will suffice since it implies that all elements of $p$ other than at most one are contained in complete paths lying strictly above $p$. Due to the induction hypothesis the indicator functions of those elements can be expressed as linear combinations of indicator functions of complete paths. The indicator function of the remaining element (if it exists) can now also be expressed in this fashion since it is the difference of $\chi_p$ and the sum of indicator functions of all the other elements.

Indeed, there exists an $i$ such that $p\cap A_i=(k_1,l_1)$ and $p\cap A_{i+1}=(k_2,l_2)$. We consider four possible cases.
\begin{enumerate}[label=\arabic*.]
\item $(k_1,l_1)$ is a destination, i.e. $l_1=i+1$. Then we may take $p'$ to be $p$ without its elements equal to or succeeding $(k_2,l_2)$.
\item $(k_2,l_2)$ is a source, i.e. $k_2=i+1$. Then we may take $p'$ to be $p$ without its elements preceding or equal to $(k_1,l_1)$.
\end{enumerate}
If neither of the above holds, we apply Proposition~\ref{tworoots} to antichains $A_i$ and $A_{i+1}$ and consider the corresponding set $\Delta$. We see that $(k_1,l_1)$ and $(k_2,l_2)$ both belong to $\Delta$. It isn't hard to see that this is only possible when either $\mathfrak r((k_1,l_1))\in A_{i+1}$ lies in a higher row than $(k_2,l_2)$ or $\mathfrak l((k_2,l_2))\in A_i$ lies in a higher row than $(k_1,l_1)$. We obtain the remaining two possibilities.
\begin{enumerate}[label=\arabic*.]
\setcounter{enumi}{2}
\item $\mathfrak r((k_1,l_1))$ lies in a higher row than $(k_2,l_2)$. Then we may take $p'$ to be $p$ with its elements equal to or succeeding $(k_1,l_1)$ replaced with $\mathcal R((k_1,l_1))$.
\item $\mathfrak l((k_2,l_2))$ lies in a higher row than $(k_1,l_1)$. Then we may take $p'$ to be $p$ with its elements preceding or equal to $(k_2,l_2)$ replaced with $\mathcal L((k_2,l_2))$.
\end{enumerate}

These four possibilities can be illustrated through our running example as follows.
\begin{enumerate}[label=\arabic*.]
\item $p=(x_4,x_5)$, $(k_1,l_1)=x_4$, $(k_2,l_2)=x_5$, $p'=(x_4)$.
\item $p=(x_2,x_4,x_5)$, $(k_1,l_1)=x_2$, $(k_2,l_2)=x_4$, $p'=(x_4,x_5)$.
\item $p=(x_1,x_3,x_5)$, $(k_1,l_1)=x_1$, $(k_2,l_2)=x_3$, $p'=(x_1,x_2,x_4)$.
\item $p=(x_1,x_3,x_5)$, $(k_1,l_1)=x_3$, $(k_2,l_2)=x_5$, $p'=(x_4,x_5)$. \qedhere
\end{enumerate}
\end{proof}

Before we proceed to the case of a singular $\lambda$ observe that whether a tuple of antichains $$(A_i\subset Q_i,1\le i\le n-1)$$ provides a vertex of $P_\lambda$ only depends on the tuple itself and not on $\lambda$ (as long as $\lambda$ is regular). If such a tuple does indeed provide a vertex we will refer to it as ``nice''.\label{nice}

Now let $\lambda$ be singular. 
\begin{theorem}\label{allsing}
A point $v$ constitutes a vertex of $P_\lambda$ if and only if it can be expressed as $$v=\sum_{i=1}^{n-1} a_i\chi_{A_i}$$ for a nice tuple $(A_i)$. 
\end{theorem}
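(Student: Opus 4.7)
The plan is to reduce to the regular case by perturbation. Set $\rho=\omega_1+\ldots+\omega_{n-1}$, so that $\lambda+\epsilon\rho$ is a regular weight for every $\epsilon>0$, and observe directly from the definition that $P_{\lambda+\epsilon\rho}=P_\lambda+\epsilon P_\rho$. By Theorem~\ref{allequiv} combined with Proposition~\ref{simproot}, the vertices of $P_{\lambda+\epsilon\rho}$ are exactly the points $\sum_{i=1}^{n-1}(a_i+\epsilon)\chi_{A_i}$ indexed by nice tuples $(A_i)$; it is crucial here that the nice-tuple condition does not depend on $\epsilon$.

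For the ``if'' direction I fix a nice tuple $(A_i)$ and observe the splitting
\[ v(\epsilon):=\sum_{i=1}^{n-1}(a_i+\epsilon)\chi_{A_i}=\sum_{i=1}^{n-1} a_i\chi_{A_i}+\epsilon\sum_{i=1}^{n-1}\chi_{A_i}, \]
in which the first summand lies in $P_\lambda$ and the second in $P_\rho$ (each $\chi_{A_i}$ belongs to $P_{\omega_i}$ by Proposition~\ref{simproot}). Since $v(\epsilon)$ is a vertex of the Minkowski sum $P_\lambda+\epsilon P_\rho$, the decomposition is unique and each summand is a vertex of its polytope, so $\sum a_i\chi_{A_i}$ is a vertex of $P_\lambda$, which is the claim.

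For the ``only if'' direction I take a vertex $v$ of $P_\lambda$ and choose a linear functional $\ell$ on $U$ that uniquely exposes $v$ in $P_\lambda$. Selecting any $\ell'$ in general position with respect to the (finite) vertex set of $P_\rho$ and any sufficiently small $\delta>0$, the perturbed functional $\ell+\delta\ell'$ still exposes only $v$ on $P_\lambda$ (because $\ell$ sits in the interior of the normal cone to $P_\lambda$ at $v$) while also exposing a unique vertex $w$ of $P_\rho$ (the $\ell'$-maximiser among the vertices of the $\ell$-exposed face of $P_\rho$). Consequently $v+\epsilon w$ is a vertex of $P_{\lambda+\epsilon\rho}$, so by the regular case $v+\epsilon w=\sum(a_i+\epsilon)\chi_{A_i}$ for some nice tuple $(A_i)$, and reading off the $\epsilon$-free part yields $v=\sum a_i\chi_{A_i}$.

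The one place that calls for a little care is the perturbation step, but it reduces to two standard facts: the normal cone at a vertex is open, so any small perturbation of $\ell$ still exposes $v$ on $P_\lambda$; and a generic linear functional separates the finitely many vertices of $P_\rho$, so it picks out a unique vertex from the $\ell$-exposed face. No further FFLV-specific combinatorics is needed beyond what the regular case has already provided.
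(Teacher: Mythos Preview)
Your argument is correct and takes a genuinely different route from the paper. The paper proves the ``if'' part by observing that the hypotheses of the proof of Theorem~\ref{allequiv} still hold verbatim (the graph $\Theta$ and the induction on complete paths go through unchanged), and proves the ``only if'' part via polar duality: for each nice tuple the tangent cone at the singular vertex contains the tangent cone at the corresponding regular vertex, so the dual (normal) cones satisfy the reverse inclusion and therefore already cover all of $U$, leaving no room for further vertices.

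Your approach instead packages both directions into the single identity $P_{\lambda+\epsilon\rho}=P_\lambda+\epsilon P_\rho$ together with the unique decomposition of a vertex of a Minkowski sum. This is more elementary in that it stays entirely within the language of exposing functionals and avoids the normal-fan formalism; it also makes the reduction to the regular case completely transparent. The paper's route, on the other hand, yields slightly more structural information (the explicit inclusion $C_{v'}^\circ\subset C_v^\circ$ between normal cones), and its ``if'' direction is arguably shorter since it simply points back to an already-written proof. Two minor remarks on your write-up: the phrase ``the normal cone at a vertex is open'' should read ``has nonempty interior'' (normal cones are closed), and ``reading off the $\epsilon$-free part'' in the last step is really another invocation of unique Minkowski decomposition (splitting $\sum(a_i+\epsilon)\chi_{A_i}$ as $\sum a_i\chi_{A_i}+\epsilon\sum\chi_{A_i}$ and comparing with $v+\epsilon w$), which you might make explicit. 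Also note that you are tacitly using Theorem~\ref{allequiv} for non-integral positive $a_i+\epsilon$; this is harmless since none of the proofs in Section~\ref{allsec} use integrality.
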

\begin{proof}
Once again, to prove the ``if'' part we are to show that for a point $x$ satisfying
\begin{enumerate}[label=\arabic*)]
\item $x_{k,l}=0$ whenever $v_{k,l}=0$ and 
\item $S(x,d)=S(v,d)$ for every Dyck path $d$ such that $S(v,d)=M(\lambda,d)$,
\end{enumerate}
we have $x=v$.

However, we see that if $(k,l)\not\in\bigcup A_i$, then $v_{k,l}=0$ and that if a path $$d=((i_1,j_1),\ldots,(i_N,j_N))$$ intersects every $A_i$ with $i\in[i_1,i_N]$ exactly once, then $S(v,d)=M(\lambda,d)$. This means that we may apply the above proof of the ``if'' part of Theorem~\ref{allequiv}.

To obtain the ``only if'' part we make use of the notions of polar duality and normal fans. Consider a regular dominant integral weight $\lambda'$. Let $(A_i)$ be a nice tuple corresponding to vertices $v$ of $P_\lambda$ and $v'$ of $P_{\lambda'}$. As we have pointed out, we have $v_{k,l}=0$ whenever $v'_{k,l}=0$ and $S(v,d)=M(\lambda,d)$ whenever $S(v',d)=M(\lambda',d)$ for a Dyck path $d$. This shows that the (translated) tangent cone $$C_v=\{k(x-v),x\in P_\lambda,k\ge 0\}$$ is contained in the tangent cone $$C_{v'}=\{k(x'-v'),x'\in P_{\lambda'},k\ge 0\}.$$ We obtain the reverse inclusion of polar duals: $C_{v'}^\circ\subset C_v^\circ$, where $^\circ$ denotes the polar dual with respect to an arbitrary scalar product: $$X^\circ=\{y|\forall x\in X: (x,y)\le 1\}.$$

However, we know that the union of such $C_{v'}^\circ$ over all vertices $v'$ of $P_{\lambda'}$ provided by nice tuples is the whole space $\mathbb R^{n(n-1)/2}$ (they are the maximal cones in the normal fan of $P_{\lambda'}$). Consequently, in view of the established inclusion, the union of all $C_v^\circ$ over $v$ provided by nice tuples (maximal cones in the normal fan of $P_\lambda$) is also $\mathbb R^{n(n-1)/2}$. This shows that $P_\lambda$ does not contain any other vertices.

The details regarding polar duality and normal fans can be found in a textbook on toric varieties such as~\cite{fult}.
\end{proof}

For completeness' sake we point out the obvious fact that two nice tuples $(A_i)$ and $(A'_i)$ provide the same vertex if and only if we have $A_i=A'_i$ for all $i$ such that $a_i>0$.

\section{Permutation vertices}\label{permsec}
The goal of this section is to prove Theorem \ref{permverts}. Throughout this and the next section for a Dyck path $d$ we will denote $M(d)=M(\lambda,d)$.

Recall the set $R$ of all positive length segments $[i,j]$ with $1\le i<j\le n$. Also recall the points $x(E)\in U$ for $E\subset R$. We denote by ${\bf R}_p$ 
($p$ is for permutation) the set of all $E\subset R$ such that if $[i,j]\in E$, $[k,l]\in E$, then the intersection $[i,j]\cap [k,l]$ (if nonempty) belongs to $E$.

We first introduce the following notions. For a Dyck path $d$ we say that $(i,j)\in d$ is a peak of $d$ if $(i,j-1), (i+1,j)\in d$. We say that $(i,j)\in d$ is a valley of $d$ if $(i-1,j), (i,j+1)\in d$. We will make repeated use of 
\begin{lemma}\label{super}
Let $E\in{\bf R}_p$ and $d$ be a Dyck path with peak $(i,j)$. Then $$S(x(E),d)-x(E)_{i,j}\le M(d).$$ If $\lambda$ is regular, then the inequality is strict.
\end{lemma}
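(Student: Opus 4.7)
The plan is to argue by induction on $|E|$. The base case $|E|=0$ gives $x(E)=0$, so the inequality becomes $0\le M(d)$, which is strict when $\lambda$ is regular since then $M(d)\ge a_{i_1}>0$.

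For the inductive step, I would pick a segment $[k,l]\in E$ that is maximal under inclusion and set $E'=E\setminus\{[k,l]\}$. First one checks that $E'\in{\bf R}_p$: the intersection of any two members of $E'$ lies in $E$ by ${\bf R}_p$ and, being strictly smaller than $[k,l]$, cannot equal $[k,l]$, hence lies in $E'$. Maximality also implies that the pair $(k,l)$ appears on none of the paths $d^{k',l'}$ with $[k',l']\in E'$; hence the defining equations for $x(E)$ and $x(E')$ coincide off the coordinate $(k,l)$, giving $x(E)_{k',l'}=x(E')_{k',l'}$ for $(k',l')\neq(k,l)$ and $x(E)_{k,l}=M(d^{k,l})-S(x(E'),d^{k,l})$. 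Applying the inductive hypothesis to $(E',d^{k,l},(k,l))$ forces $x(E)_{k,l}\ge 0$, and strictly positive when $\lambda$ is regular.

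For the given Dyck path $d$ with peak $(i,j)$, I would then distinguish three subcases according to the position of $(k,l)$. If $(k,l)\notin d$, or if $(k,l)=(i,j)$, the quantity $S(x(E),d)-x(E)_{i,j}$ reduces (after cancellation in the second case) to $S(x(E'),d)-x(E')_{i,j}$, and the IH concludes. The main obstacle is the remaining case, $(k,l)\in d$ with $(k,l)\neq(i,j)$: there $S(x(E),d)-x(E)_{i,j}=(S(x(E'),d)-x(E')_{i,j})+x(E)_{k,l}$, and the IH bound on the first summand alone does not absorb the positive $x(E)_{k,l}$. To handle this I would use that $(k,l)$ and $(i,j)$, both lying on the chain $d$, are $\preceq$-comparable; say $(k,l)\preceq(i,j)$. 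Splicing the relevant half of $d^{k,l}$ into $d$ at $(k,l)$ produces an auxiliary Dyck path $\hat d$ that still has peak $(i,j)$ and satisfies $M(\hat d)\le M(d)$. Combining the IH for $(E',\hat d,(i,j))$ with the saturation identity $S(x(E),d^{k,l})=M(d^{k,l})$---which rewrites $x(E)_{k,l}$ as the IH deficit for $(E',d^{k,l},(k,l))$---should transfer the extra $x(E)_{k,l}$ onto matching slack in $M(d)$. The strict inequality when $\lambda$ is regular then propagates from the strict IH together with the strict positivity $x(E)_{k,l}>0$. This splicing construction is where I expect the bulk of the technical work to lie.
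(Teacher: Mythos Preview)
Your setup through Case~2 is clean and correct: choosing $[k,l]$ maximal does keep $E'\in{\bf R}_p$, the coordinates of $x(E)$ and $x(E')$ agree off $(k,l)$, and the identity $x(E)_{k,l}=M(d^{k,l})-S(x(E'),d^{k,l})$ follows. The approach is genuinely different from the paper's, which inducts on the width $i_N-i_1$ of the Dyck path $d$ (with $E$ fixed) rather than on $|E|$.

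The gap is in Case~3, and it is not merely a matter of missing details. Write $d=d_1\cdot(k,l)\cdot d_2$ and $d^{k,l}=L'\cdot(k,l)\cdot R'$; your spliced path is $\hat d=L'\cdot(k,l)\cdot d_2$ (keeping the half of $d$ containing the peak). Setting $\tilde d=d_1\cdot(k,l)\cdot R'$, one has the additive identities
\[
S(x(E'),\hat d)+S(x(E'),\tilde d)=S(x(E'),d)+S(x(E'),d^{k,l}),\qquad M(\hat d)+M(\tilde d)=M(d)+M(d^{k,l}).
\]
Feeding in the IH for $(E',\hat d,(i,j))$ together with $S(x(E'),\tilde d)\le M(\tilde d)$ yields
\[
S(x(E'),d)-x(E')_{i,j}\ \le\ M(d)+\bigl[M(d^{k,l})-S(x(E'),d^{k,l})\bigr]=M(d)+x(E)_{k,l},
\]
which is the \emph{opposite} direction from what you need, namely $S(x(E'),d)-x(E')_{i,j}\le M(d)-x(E)_{k,l}$. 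The ``transfer onto matching slack'' you describe therefore does not come for free from the splicing identity; the surplus $x(E)_{k,l}$ sits on the wrong side by exactly $2x(E)_{k,l}$. Note also that maximality of $[k,l]$ in $E$ gives you nothing about how $[k,l]$ sits relative to the other segments of $E$ meeting $d_1$ (elements $(a,b)\in d_1$ with $[a,b]\in E$ need not be contained in $[k,l]$), so there is no obvious way to control $S_1=\sum_{(a,b)\in d_1}x(E')_{a,b}$ separately.

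The paper's induction on $i_N-i_1$ sidesteps this by fixing $E$ and shrinking $d$: it locates the nearest elements of $d$ on either side of the peak that lie in $E$ and replaces the ends of $d$ by the corresponding halves of $d^{\cdot,\cdot}$, so that the inductive comparison is between $M$-values of shorter paths rather than between $x(E)$ and $x(E')$. If you want to keep inducting on $|E|$, Case~3 will require a substantially different idea than the splicing you outline.
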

\begin{proof}
Denote $d=((i_1,j_1),\ldots,(i_N,j_N))$ and $(i,j)=(i_k,j_k)$. We prove the lemma by induction on $i_N-i_1$. The base is trivial. 

Let $i_N>i_1$. First of all, consider any path $d'$ covered by the induction hypothesis. If $d'$ has a peak $(i',j')$ with $[i',j']\notin E$, then $$S(x(E),d')=S(x(E),d')-x(E)_{i',j'}\le M(d').$$ If $[i',j']\in E$ for any peak $(i',j')$ of $d'$, then $[i',j']\in E$ for any valley $(i',j')$ of $d'$ as well, since $E\in{\bf R}_p$. We write
\begin{multline}\label{pvdecomp}
S(x(E),d')=\sum_{\text{peak }(i',j')\text{ of }d}S(x(E),d^{i',j'})-\sum_{\text{valley }(i',j')\text{ of }d}S(x(E),d^{i',j'})=\\ \sum_{\text{peak }(i',j')\text{ of }d}M(d^{i',j'})-\sum_{\text{valley }(i',j')\text{ of }d}M(d^{i',j'})=M(d').
\end{multline}
In either case we obtain
\begin{equation}\label{hypoth}
S(x(E),d')\le M(d').
\end{equation}

Now consider several possibilities.
\begin{enumerate}[label=\arabic*.]
\item We have $[i_l,j_l]\notin E$ (and $x(E)_{i_l,j_l}=0$) for any $1\le l<k$. Then consider the Dyck path $d'$ obtained from $d$ by replacing $(i_1,j_1),\ldots,(i_k,j_k)$ with $(i_k+1,i_k+2),\ldots,(i_k+1,j_k-1)$. Note that any $x(E)_{i',j'}$ with $(i',j')\in d'\backslash d$ is nonnegative, since either $[i',j']\notin E$ or the induction hypothesis for $d^{i',j'}$ gives us $x(E)_{i',j'}\ge S(d^{i',j'})-M(d^{i',j'})=0$. Consequently, applying~(\ref{hypoth}) to $d'$, we obtain $$S(x(E),d)-x(E)_{i,j}\le S(x(E),d')\le M(d')\le M(d).$$ If $\lambda$ is regular, then $M(d')<M(d)$.
\item We have $[i_l,j_l]\notin E$ for any $k<l\le N$. This case is symmetric to the previous one.
\end{enumerate} 
If neither of the above holds, then we can specify the greatest $l_1<k$ and the least $l_2>k$ such that $[i_{l_1},j_{l_1}]\in E$ and $[i_{l_2},j_{l_2}]\in E$.
\begin{enumerate}[label=\arabic*.]
\setcounter{enumi}{2}
\item We have $j_{l_1}<i_{l_2}$. Then we define Dyck paths $d_1$ obtained from $d$ by replacing all elements succeeding $(i_{l_1},j_{l_1})$ with $(i_{l_1}+1,j_{l_1}),\ldots,(j_{l_1}-1,j_{l_1})$ and $d_2$ obtained from $d$ by replacing all elements preceding $(i_{l_2},j_{l_2})$ with $(i_{l_2},i_{l_2}+1),\ldots,(i_{l_2},j_{l_2}-1)$. Similarly to case 1, any $x(E)_{i',j'}$ with $(i',j')\in (d_1\cup d_2)\backslash d$ is nonnegative. Via~(\ref{hypoth}) applied to $d_1$ and $d_2$ we have $$S(x(E),d)-x(E)_{i,j}\le S(x(E),d_1)+S(x(E),d_2)\le M(d_1)+M(d_2)\le M(d).$$ If $\lambda$ is regular, then $M(d_1)+M(d_2)< M(d)$.
\item We have $j_{l_1}>i_{l_2}$. Then $[i_{l_2},j_{l_1}]\in E$. Define $d_1$ and $d_2$ as in the previous case. We have 
\begin{multline*}
S(x(E),d)-x(E)_{i,j}\le S(x(E),d_1)+S(x(E),d_2)-S(x(E),d^{i_{l_2},j_{l_1}}) \le\\ M(d_1)+M(d_2)-M(d^{i_{l_2},j_{l_1}})=M(d).
\end{multline*}
If $\lambda$ is regular, then the induction hypothesis for $d^{i_{l_2},j_{l_1}}$ provides $x(E)_{i_{l_2},j_{l_1}}>0$ which implies that the first inequality in the above chain is strict. \qedhere
\end{enumerate}   
\end{proof}

\begin{lemma}
Let $E\in {\bf R}_p$. Then $x(E)$ is a vertex of $P_\lambda$.
\end{lemma}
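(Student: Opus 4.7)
The plan is to verify first that $x(E)\in P_\lambda$ and second that $x(E)$ is pinned down uniquely by the constraints it saturates, which forces it to be a vertex. For membership, Lemma~\ref{super} does essentially all the work. Nonnegativity of each coordinate $x(E)_{i,j}$ is automatic when $[i,j]\notin E$; when $[i,j]\in E$, applying Lemma~\ref{super} to $d^{i,j}$, whose unique peak is $(i,j)$, gives $S(x(E),d^{i,j})-x(E)_{i,j}\le M(d^{i,j})=S(x(E),d^{i,j})$, hence $x(E)_{i,j}\ge 0$. For an arbitrary Dyck path $d$ I would split on whether every peak of $d$ lies in $E$. If some peak $(i',j')$ of $d$ satisfies $[i',j']\notin E$ then $x(E)_{i',j'}=0$ and Lemma~\ref{super} yields $S(x(E),d)\le M(d)$ directly. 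Otherwise every peak of $d$ is in $E$, and the closure property defining ${\bf R}_p$ forces every valley into $E$ as well: two consecutive peaks $(a,b)$ and $(c,d)$ (with $a<c$) of a Dyck path are separated by the valley $(c,b)$, and the segment $[c,b]=[a,b]\cap[c,d]$ must then lie in $E$. At this point the telescoping identity~(\ref{pvdecomp}) from the proof of Lemma~\ref{super} applies verbatim and delivers $S(x(E),d)=M(d)$.

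For the vertex property, I would consider the active linear relations $x_{k,l}=0$ for $[k,l]\notin E$ together with $S(x,d^{i,j})=M(d^{i,j})$ for $[i,j]\in E$, both families being active at $x(E)$ by construction. Given any point $y$ satisfying all of these relations, I would prove $y_{i,j}=x(E)_{i,j}$ by induction on $j-i$. The case $[i,j]\notin E$ is built into the first family of relations; for $[i,j]\in E$, every pair $(i',j')$ on $d^{i,j}$ other than $(i,j)$ itself has $j'-i'<j-i$, so its $y$-value already agrees with its $x(E)$-value by the inductive hypothesis, and then the equation $S(y,d^{i,j})=S(x(E),d^{i,j})$ forces $y_{i,j}=x(E)_{i,j}$.

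The only substantive point in the whole argument is the propagation of the ``peaks in $E$'' hypothesis to ``valleys in $E$'' inside the membership proof; that is where the closure-under-intersection condition defining ${\bf R}_p$ is actually used. Everything else reduces to quoting Lemma~\ref{super} and an elementary triangular solve for $y$ along rows of increasing size $j-i$.
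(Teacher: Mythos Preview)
Your argument is correct and follows the same route as the paper. For membership in $P_\lambda$, the paper simply points back to the derivation of~(\ref{hypoth}) inside the proof of Lemma~\ref{super}, which is precisely the two-case split you describe (a peak outside $E$ lets Lemma~\ref{super} fire directly; all peaks in $E$ forces all valleys into $E$ and then the telescoping identity~(\ref{pvdecomp}) gives equality). For the vertex property, the paper is terser still, saying only that it ``follows directly from the definition of $x(E)$''; your inductive triangular solve on $j-i$ is exactly the content of that remark, since the defining conditions of $x(E)$ are $\binom{n}{2}$ independent active constraints. One cosmetic point: your peak/valley argument and the identity~(\ref{pvdecomp}) presuppose that $d$ has at least one peak, which fails for the length-one paths $d=((i,i+1))$; these are covered trivially since then $d=d^{i,i+1}$ and the inequality $S(x(E),d)\le M(d)$ is immediate from the definition of $x(E)$, but you may wish to say so explicitly.
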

\begin{proof}
We first show that $x(E)\in P_\lambda$. We need to prove that for any $1\le i<j\le n$ the entry $x(E)_{i,j}$ is nonnegative and that for any Dyck path $d$ one has
$S(x(E),d)\le M(d)$. 

If $[i,j]\notin E$, then $x(E)_{i,j}=0$. Let $[i,j]\in E$ and $S(x(E),d^{i,j})=M(d^{i,j})$. Then $x(E)_{i,j}\ge 0$ follows directly from Lemma~\ref{super} applied to $d^{i,j}$ and peak $(i,j)$.

If $d$ is a Dyck path, then we may obtain $S(x(E),d)\le M(d)$ just as we obtained~\eqref{hypoth} (with $d$ replacing $d'$).

The last thing to prove is that $x(E)$ is a vertex of $P_\lambda$, but this now follows directly from the definition of $x(E)$.  
\end{proof}

\begin{lemma}\label{injection}
There exists an injection $\Psi:{\bf R}_p\to S_n$, $\Psi:E\mapsto w(E)$ such that for any $E\in {\bf R}_p$ we have $\mu_{x(E)}=w(E)\lambda$.
\end{lemma}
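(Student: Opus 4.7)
The plan is to define $w(E)$ as an explicit product of transpositions indexed by the segments of $E$, to verify $\mu_{x(E)}=w(E)\lambda$ by induction on $|E|$, and then to read off injectivity from a support argument. Order the segments of $E$ as $[i_1,j_1],\ldots,[i_k,j_k]$ with non-decreasing lengths $j_l-i_l$ and put $w(E):=s_{i_k,j_k}\cdots s_{i_1,j_1}$, where $s_{i,j}=(i\,j)\in S_n$. This is independent of the chosen linear extension because any two segments $[i,j],[k,l]\in E$ incomparable under $\subseteq$ must have $\{i,j\}\cap\{k,l\}=\varnothing$: if they shared an endpoint then either one would sit at the edge of the other (forcing comparability) or they would meet in a single point, which cannot belong to $E\in{\bf R}_p$ since one-point sets are not in $R$. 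Transpositions with disjoint supports commute, so $w(E)$ is well-defined.

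For the main identity I would induct on $|E|$. The base case $E=\varnothing$ is trivial. For the step, pick $[i_0,j_0]$ maximal in $E$, set $E'=E\setminus\{[i_0,j_0]\}$, and note that $E'\in{\bf R}_p$ (any intersection of two elements of $E'$ lies in $E$ and cannot equal $[i_0,j_0]$ without violating maximality). Every segment to the left of $[i_0,j_0]$ in the length ordering is incomparable with $[i_0,j_0]$, because being of length $\ge j_0-i_0$ it cannot lie inside $[i_0,j_0]$ and by maximality it cannot contain it, so $(i_0\,j_0)$ commutes with all factors to its left and one may write $w(E)=s_{i_0,j_0}w(E')$. The key preliminary is that $x(E)$ and $x(E')$ agree at every position other than $(i_0,j_0)$: for each $[i',j']\in E'$ the path $d^{i',j'}$ avoids $(i_0,j_0)$, because $(i_0,j_0)\in d^{i',j'}$ would force $[i',j']$ to strictly contain $[i_0,j_0]$, contradicting maximality; the defining linear system for $x(E')$ is then satisfied by the restriction of $x(E)$, and uniqueness gives the agreement. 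It follows that $\mu_{x(E)}=\mu_{x(E')}-x(E)_{i_0,j_0}\alpha_{i_0,j_0}$, while the standard reflection formula combined with the inductive hypothesis $\mu_{x(E')}=w(E')\lambda$ gives $s_{i_0,j_0}w(E')\lambda=\mu_{x(E')}-(\mu_{x(E'),i_0}-\mu_{x(E'),j_0})\alpha_{i_0,j_0}$. The induction thus reduces to the scalar identity
\[
x(E)_{i_0,j_0}=\mu_{x(E'),i_0}-\mu_{x(E'),j_0}.
\]

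To verify this identity I would expand the left side using $S(x(E),d^{i_0,j_0})=\lambda_{i_0}-\lambda_{j_0}$ together with $x(E)=x(E')$ off $(i_0,j_0)$, and the right side using the coordinate formula $\mu_{x(E'),k}=\lambda_k-\sum_{[k,j']\in E'}x(E')_{k,j'}+\sum_{[i',k]\in E'}x(E')_{i',k}$, then subtract. After the obvious cancellations, the potentially surviving contributions come from segments of $E'$ of four shapes: $[i_0,j']$ with $j'>j_0$, $[i',j_0]$ with $i'<i_0$, $[i',i_0]$, and $[j_0,j']$. The first two are empty by maximality of $[i_0,j_0]$; the last two are empty because any such segment together with $[i_0,j_0]$ would produce the forbidden single-point intersection $\{i_0\}$ or $\{j_0\}$. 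This finishes the induction.

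For injectivity (in the regular case), Lemma~\ref{super} applied to $d^{i,j}$ for each $[i,j]\in E$ yields $x(E)_{i,j}>0$, so the support of $x(E)$ recovers $E$; combined with the free action of $W$ on $W\lambda$ for regular $\lambda$, this shows that $E\mapsto w(E)$ is injective. The most delicate point will be the bookkeeping in the scalar identity above: keeping careful track of which segments of $E'$ enter through $d^{i_0,j_0}$ and which through the $\varepsilon$-coordinates of the $\alpha_{i',j'}$, and confirming that every surviving term is killed by either the maximality of $[i_0,j_0]$ or the ${\bf R}_p$ closure condition.
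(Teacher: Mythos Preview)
Your argument is correct and takes a genuinely different route from the paper's. The paper extends the construction linearly to all weights $\nu$ by defining $x_\nu(E)$ and $\mu_\nu(E)$, then verifies $\mu_\nu(E)=w(E)\nu$ on the spanning set $\nu=\varepsilon_i$ via an explicit combinatorial description of the nonzero coordinates of $x_{\varepsilon_i}(E)$. You instead run an induction on $|E|$, peeling off a $\subseteq$-maximal segment $[i_0,j_0]$, and reduce everything to the scalar identity $x(E)_{i_0,j_0}=\mu_{x(E'),i_0}-\mu_{x(E'),j_0}$. Your bookkeeping is right: the four potentially surviving families of segments are indeed killed either by $\subseteq$-maximality of $[i_0,j_0]$ (the shapes $[i_0,j']$ with $j'>j_0$ and $[i',j_0]$ with $i'<i_0$) or by the closure condition in ${\bf R}_p$, which in particular forbids two segments of $E$ meeting in a single point (the shapes $[i',i_0]$ and $[j_0,j']$). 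Your approach is arguably more elementary, avoiding the auxiliary family $x_\nu(E)$; the paper's approach has the advantage of making the dependence on $\lambda$ transparently linear, which is convenient later.

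One small gap: your injectivity paragraph does not quite close. Knowing that the support of $x(E)$ recovers $E$ gives injectivity of $E\mapsto x(E)$, and freeness of the $W$-action on $W\lambda$ gives injectivity of $w\mapsto w\lambda$, but to link them you still need that $\mu_{x(E)}=\mu_{x(E')}$ forces $x(E)=x(E')$. This is not automatic (the weight map $x\mapsto\mu_x$ is far from injective on $U$); what makes it work is that $x(E)\in\Pi_\lambda$ and the extremal weight space $L_\lambda[w\lambda]$ is one-dimensional, so there is a unique $T\in\Pi_\lambda$ with $\mu_T=w\lambda$. Invoking this (stated in the paper just before Theorem~\ref{permverts}) completes your argument; the paper's own proof relies on the same fact, equally implicitly. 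You should also remark, as the paper does, that $w(E)$ is defined independently of $\lambda$, so establishing injectivity for one regular $\lambda$ suffices.
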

\begin{proof}
Given $E\in {\bf R}_p$, we define $w(E)$ as follows: $w(E)$ is the product of all the transpositions $(i,j)$ such
that $[i,j]\in E$; the order of factors is chosen in such a way that if $[a,b], [c,d]\in E$ and $b-a<d-c$, then
the transposition $(a,b)$ is on the right of $(c,d)$ (i.e. $(a,b) $ is applied first).

Consider an arbitrary $\mathfrak{sl}_n$-weight $\nu$ and  $E\in{\bf R}_p$. We define the point $x_\nu(E)\in U$ via $x_\nu(E)_{i,j}=0$ if $[i,j]\notin E$ and $S(x_\nu(E),d^{i,j})=M(\nu,d^{i,j})$ if $[i,j]\in E$ (in particular $x_\lambda(E)=x(E)$). We also set $$\mu_\nu(E)=\nu-\sum_{1\le i<j\le n} x_\nu(E)_{i,j}\alpha_{i,j}$$ (in particular $\mu_\lambda(E)=\mu_{x(E)}$).

We see that for any given $E$ the weight $\mu_\nu(E)$ depends linearly on $\nu$. Therefore, to prove the identity $\mu_\lambda(E)=w(E)\lambda$ it suffices to prove $\mu_\nu(E)=w(E)\nu$ for $\nu$ ranging over some spanning set in $\mathfrak h^*$. The set we choose consists of $\varepsilon_i=\omega_i-\omega_{i-1}$ for $1\le i\le n$ where we set $\omega_0=\omega_n=0$. This set is convenient since for any $w\in S_n$ and $1\le i\le n$ we have $w\varepsilon_i=\varepsilon_{w(i)}$. For $1\le i<j\le n$ we also have $\alpha_{i,j}=\varepsilon_i-\varepsilon_j$. (These facts concerning root systems of type $A$ may be found in~\cite{carter}.)

We fix a $E\in{\bf R}_p$ and $1\le i\le n$ and show that $\mu_{\varepsilon_i}(E)=w(E)\varepsilon_i=\varepsilon_{w(E)(i)}$. To do so we define a subset $E(i)\subset E$, $$E(i)=\{[i_1,j_1],\ldots,[i_m,j_m]\}.$$ If the are no segments in $E$ with an endpoint in $i$ we set $E(i)=\varnothing$. Otherwise, let $[i_1,j_1]$ be the shortest segment in $E$ with an endpoint in $i$. Note that, since $E\in{\bf R}_p$, the point $i$ is either the left end for all segments in $E$ containing $i$ as an endpoint or the right end for all of them. Assume $i_1=i$, then we define $[i_2,j_2]$ as the shortest segment in $E$ with a (necessarily right) endpoint in $j_1$ and longer than $[i_1,j_1]$. We we define $[i_3,j_3]$ as the shortest segment with a (left) endpoint in $i_2$ and longer than $[i_2,j_2]$ and so forth while possible. The case $j_1=i$ is symmetrical.

First of all, directly from the definition of $w(E)$ it follows that $w(E)(i)=p$, where $p$ is that endpoint of $[i_m,j_m]$ which is not contained in $[i_{m-1},j_{m-1}]$. Next, observe that $M(\varepsilon_i,d^{k,l})$ is 1 if $k=i$, $-1$ if $l=i$ and 0 otherwise. Herefrom it is easy to obtain a description of the coordinates of $x_{\varepsilon_i}(E)$. Namely, $x_{\varepsilon_i}(E)_{k,l}=0$ if $[k,l]\notin E(i)$, $x_{\varepsilon_i}(E)_{i_1,j_1}$ is 1 if $i_1=i$ and $-1$ if $j_1=i$ and, lastly, $x_{\varepsilon_i}(E)_{i_r,j_r}=-x_{\varepsilon_i}(E)_{i_{r-1},j_{r-1}}$ for $2\le r\le m$. A simple calculation now shows that that $\mu_{\varepsilon_i}(E)=\varepsilon_p$, quod erat demonstrandum.

 
We are left to show that $\Psi$ is an injection. Since $\Psi$ is independent of $\lambda$ we may assume that $\lambda$ is regular. However, in that case for two distinct elements $E,E'\in {\bf R}_p$ we have $x(E)\neq x(E')$ and, consequently, $\mu_{x(E)}\neq\mu_{x(E')}$. The fact that $x(E)\neq x(E')$ can be deduced from Lemma~\ref{lemma} in the next section (where $\lambda$ is assumed to be regular), for the lemma shows that the sets of nonzero coordinates of $x(E)$ and $x(E')$ differ.
\end{proof}

\begin{lemma}\label{bijection}
The map $\Psi:E\mapsto w(E)$ is a bijection between ${\bf R}_p$ and $S_n$.
\end{lemma}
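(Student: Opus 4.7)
Lemma \ref{injection} has already shown that $\Psi$ is injective, so to establish the bijection it suffices to exhibit, for each $w \in S_n$, an element $E \in {\bf R}_p$ with $w(E) = w$ (equivalently, to show $|{\bf R}_p| = n!$). My plan is to proceed by induction on $n$, constructing $E_w$ explicitly. The base case $n = 1$ is trivial.

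For the inductive step, I would split on $k = w(n)$. When $k = n$, the permutation $w$ restricts to $\tilde w \in S_{n-1}$; by the induction hypothesis there exists $\tilde E \in {\bf R}_p^{[1, n-1]}$ with $w(\tilde E) = \tilde w$, and setting $E_w = \tilde E$ (now viewed inside $R_{[1, n]}$) does the job: no segment of $E_w$ has endpoint $n$, so $w(E_w)(n) = n$, and for $i < n$ the sequence $E_w(i)$ is identical to $\tilde E(i)$, giving $w(E_w)(i) = \tilde w(i) = w(i)$.

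The remaining case $k < n$ is the principal obstacle. The tempting recipe---let $w'' = (k,n)\, w$ (which fixes $n$), inductively produce $E_{w''}$, and adjoin $[k,n]$ together with all segments forced by closure under intersection---does not in general yield $w$. For instance at $n = 4$ and $w = [3,4,1,2]$ this produces $E = \{[2,3],[1,3],[2,4]\}$, whose $w(E) = [3,1,4,2] \neq w$, whereas the correct $E_w = \{[2,3],[1,3],[2,4],[1,4]\}$ contains the extra segment $[1,4]$ that is not forced by closure. So the correct construction must identify, from the data of $w$ alone, which additional segments involving $n$ are needed so that the zigzag sequences $E_w(i)$ traverse the correct endpoints; I expect this to be dictated by the $w$-orbit containing $n$, and the verification that the resulting family is closed under intersection and produces exactly $w$ will be the technical core of the argument.

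An alternative that avoids explicit construction is to prove $f(n) := |{\bf R}_p^{[1,n]}| = n \cdot f(n-1)$ directly by partitioning ${\bf R}_p^{[1,n]}$ according to the value $w(E)(n) \in [1,n]$. The fiber over $n$ consists of exactly those families contained in $R_{[1, n-1]}$, of size $f(n-1)$; for each $k < n$ one would then set up a bijection between the fiber $\{E : w(E)(n) = k\}$ and ${\bf R}_p^{[1, n-1]}$. Combined with injectivity from Lemma \ref{injection}, this yields $|{\bf R}_p| = n!$ and hence that $\Psi$ is a bijection.
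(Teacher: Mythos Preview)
Your proposal is not a complete proof: both routes you outline stop exactly at the step that carries the content. In the inductive approach, you correctly observe that the case $k<n$ is the crux and that the naive recipe (reduce by $(k,n)$, take intersection-closure) fails, but you do not supply the correct construction---only the expectation that it ``will be dictated by the $w$-orbit containing $n$,'' leaving the actual definition and verification undone. In the counting approach, the claim that each fiber $\{E:w(E)(n)=k\}$ is in bijection with ${\bf R}_p^{[1,n-1]}$ is precisely as hard as the original surjectivity, and you provide no candidate for the bijection.

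The paper's argument is quite different and avoids induction on $n$ altogether. It builds $\Psi^{-1}(w)$ for an arbitrary $w$ by a greedy algorithm: process all segments $[i,j]\in R$ in order of non-decreasing length, maintaining a running permutation $u$ (initially the identity) and a running set $E$ (initially empty); at the step for $[i,j]$, add $[i,j]$ to $E$ and replace $u$ by $(i,j)u$ exactly when $u^{-1}(i)<w^{-1}(i)$ and $u^{-1}(j)>w^{-1}(j)$. The substance of the proof is then to check, by tracking which integers are currently ``below'' or ``above'' their target positions, a short list of invariants ensuring that the algorithm terminates with $u=w$ and $E\in{\bf R}_p$. This sidesteps the difficulty you ran into: rather than guessing from a reduction to $S_{n-1}$ which extra segments are needed, the algorithm discovers them dynamically by measuring how far $u$ still is from $w$. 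In your $n=4$ example $w=[3\,4\,1\,2]$, the algorithm adds $[2,3]$, then $[1,3]$ and $[2,4]$, and finally $[1,4]$ because after those three transpositions one still has $u^{-1}(1)<w^{-1}(1)$ and $u^{-1}(4)>w^{-1}(4)$---exactly the segment your naive recipe missed.
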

\begin{proof}
We already know that $\Psi$ is an injection. It is thus suffices to construct an inverse map
from $S_n$ to ${\bf R}_p$. 

For $w\in S_n$ we define $\Psi^{-1}(w)$ as the output of a certain algorithm. The algorithm starts out with the identity permutation $u$ and the empty set $E$ and then processes all segments $[i,j]\in R$ in order of non-decreasing length updating the values of $u$ and $E$ each time. A segment $[i,j]$ is processed as follows. If $u^{-1}(i)<w^{-1}(i)$ and $u^{-1}(j)>w^{-1}(j)$ we add $[i,j]$ to $E$ and replace $u$ with $(i,j)u$; otherwise we do not change the values of $u$ and $E$. We will show that after all segments have been processed we end up with $u=w$ and some $E\in{\bf R}_p$ which turns out to be precisely $\Psi^{-1}(w)$. In fact, it suffices to show that our algorithm outputs $u=w$ and $E\in{\bf R}_p$, the fact that $\Psi(E)=w$ will then follow.

During the course of our algorithm for an $i$ with $w^{-1}(i)>i$ the value of $u^{-1}(i)$ gradually increases until it reaches $w^{-1}(i)$. Similarly, for a $j$ with $w^{-1}(j)<j$ the value of $u^{-1}(j)$ gradually decreases until it reaches $w^{-1}(j)$. More specifically, consider the step of our algorithm dealing with some segment $[i,j]$. Call all $k\in[1,n]$ such that we have $u^{-1}(k)<w^{-1}(k)$ at beginning of our step ``white''. Similarly, call all $k\in[1,n]$ such that we have $u^{-1}(k)>w^{-1}(k)$ at beginning of our step ``black''. It is straightforward to inductively verify the following properties of our algorithm.
\begin{enumerate}[label=\arabic*.]
\item For any pair $k<l$ of white numbers we have $u^{-1}(k)<u^{-1}(l)$. The same goes for any pair of black numbers. 
\item For a white $k$ and a black $l$ with $k<l$ we have $u^{-1}(k)>u^{-1}(l)$ if and only if the segment $[k,l]$ has already been processed at a preceding step.
\item If $i$ is white and $j$ is black, then all $k$ with $u^{-1}(i)<u^{-1}(k)<u^{-1}(j)$ are neither white nor black. We add $[i,j]$ to $E$ as a result of our step if and only if $i$ is white and $j$ is black.
\item The set of numbers which will be white during the next step is either the same as for the current step or that set minus $i$. The set of numbers which will be black during the next step is either the same as for the current step or that set minus $j$. 
\end{enumerate}

From property 2 we deduce that after the last step for any $i$ and $j$ such that $u^{-1}(i)<w^{-1}(i)$ and $u^{-1}(j)>w^{-1}(j)$ we must have $u^{-1}(j)<u^{-1}(i)$. This is only possible when $u=w$. Now suppose that after the last step we have $[i_1,j_1],[i_2,j_2]\in E$ for some $i_1<i_2<j_1<j_2$. Then, due to property 3, $i_2$ was white during the step dealing with $[i_2,j_2]$ and $j_1$ was black during the step dealing with $[i_1,j_1]$. Property 4 then implies that $i_2$ was white and $j_1$ was black during the step dealing with $[i_2,j_1]$ and we must have $[i_2,j_1]\in E$ via property 3. Therefore, we have $E\in{\bf R}_p$ after the last step.
\end{proof}

\begin{cor}
Any permutation in $w\in S_n$ can be uniquely (up to transpositions of consecutive commuting factors) written as a product of transpositions $w=(l_1,m_1)\dots (l_N,m_N)$ 
($l_i<m_i$ for all $i$) such that $m_i-l_i\ge m_{i+1}-l_{i+1}$ and the set $\{[l_i,m_i]\}_{i=1}^N$ belongs to ${\bf R}_p$. 
\end{cor}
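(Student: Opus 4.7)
The plan is to read the factorization directly off the bijection $\Psi\colon {\bf R}_p \to S_n$ of Lemma~\ref{bijection}. Given $w\in S_n$, set $E = \Psi^{-1}(w) \in {\bf R}_p$. By the very definition of $\Psi$ in Lemma~\ref{injection}, $w$ equals the product of the transpositions $(i,j)$ over $[i,j]\in E$, taken in any order for which strictly longer segments appear to the left of strictly shorter ones. Listing the segments of $E$ in any order of non-increasing length therefore produces a factorization $w = (l_1,m_1)\cdots(l_N,m_N)$ satisfying $m_i-l_i\ge m_{i+1}-l_{i+1}$ and $\{[l_i,m_i]\}_{i=1}^N = E \in {\bf R}_p$, which proves existence.

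For uniqueness, I would consider an arbitrary factorization $w=(l'_1,m'_1)\cdots(l'_{N'},m'_{N'})$ satisfying the stated conditions. Its support set $E' = \{[l'_i,m'_i]\}$ lies in ${\bf R}_p$ by assumption, and since the ordering already respects non-increasing length, the ordered product coincides with $\Psi(E')$ by the very definition of $\Psi$. Combining $\Psi(E') = w = \Psi(E)$ with the injectivity of $\Psi$ furnished by Lemma~\ref{bijection} yields $E' = E$. Hence the multiset of factors is determined by $w$, and the only possible ambiguity is the order of segments of equal length within the list.

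The remaining step is to check that this ambiguity is precisely the freedom of swapping consecutive commuting factors. Let $[a,b],[c,d]\in E$ be distinct segments of common length $k$ with $a<c$, so that automatically $b<d$. The case $b=c$ would force the intersection $[a,b]\cap[c,d]$ to be the singleton $\{b\}$, which is not a member of $R$, contradicting the defining condition of ${\bf R}_p$ (the same obstruction is visible in Example~\ref{permex}, where $\{[1,2],[2,3]\}$ fails to appear). In the remaining cases $b<c$ or $a<c<b<d$, the four endpoints $a,b,c,d$ are pairwise distinct, so the transpositions $(a,b)$ and $(c,d)$ commute. Since any two orderings of $E$ by non-increasing length differ by permutations within maximal blocks of equal length, and such permutations are generated by adjacent swaps of commuting transpositions, the uniqueness claim follows. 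I expect this final verification—that same-length members of an element of ${\bf R}_p$ necessarily commute as transpositions—to be the most delicate part of the argument; once it is in place, the corollary is immediate from Lemma~\ref{bijection}.
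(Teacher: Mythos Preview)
The paper states this corollary without proof, treating it as immediate from Lemma~\ref{bijection}; your argument supplies precisely the details the paper leaves implicit---existence via $E=\Psi^{-1}(w)$, uniqueness of the underlying segment set via injectivity of $\Psi$, and the verification that distinct same-length segments in an element of ${\bf R}_p$ have disjoint endpoint sets (the case $b=c$ being excluded because the one-point intersection cannot lie in $E\subset R$), so that the residual ordering ambiguity is exactly swaps of consecutive commuting factors. One small point worth making explicit: your uniqueness step assumes the factors $(l'_i,m'_i)$ are pairwise distinct so that the product really equals $\Psi(E')$; this is the intended reading (the set notation $\{[l_i,m_i]\}_{i=1}^N$ presupposes $N$ distinct segments), but without it a spurious pair of identical factors could be inserted.
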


We close this section with the discussion on the extremal part of the PBW character. 
We are interested on the following statistics on the set ${\bf R}_p$: 
\[
b(E)=\sum_{1\le i<j\le n} x(E)_{i,j}.
\]
Let us consider the PBW degenerate representation $V^a_\lambda$ of $\mathfrak{sl}_n$ (see \cite{FFL1}). 
This representation has an additional PBW grading; in particular, each extremal weight subspace of 
$V_\lambda^a$ being one-dimensional has some PBW degree. Then the PBW degree of the weight 
$w(E)\lambda$ subspace is equal to $b(E)$ (the extremal PBW degrees were computed in \cite{CF}).  

\begin{example}
Let $n=3$, $\la=a_1\omega_1+a_2\omega_2$. Then $\sum_{E\in{\bf R}_p} q^{b(E)}=1+q^{a_1}+q^{a_2}+3q^{a_1+a_2}$.
\end{example}

Let $\la_i=a_i+\dots+a_{n-1}$, $i=1,\dots,n-1$ and $\la_n=0$. Then
\begin{proposition}
$2b(E)=\sum_{i=1}^n |\la_i-\la_{w(E)i}|$.
\end{proposition}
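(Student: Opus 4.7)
The plan is to translate the asserted identity into an equality of coefficients in the $\varepsilon$-basis. Since $\omega_i\equiv \varepsilon_1+\cdots+\varepsilon_i$ modulo $\sum_k\varepsilon_k$, the weight $\lambda$ equals $\sum_{i=1}^n\lambda_i\varepsilon_i$. I would combine this with Lemma~\ref{injection} and $\alpha_{i,j}=\varepsilon_i-\varepsilon_j$ to obtain two expressions for $\lambda-w(E)\lambda$: on the one hand
\[
\lambda-w(E)\lambda=\sum_{k=1}^n\bigl(\lambda_k-\lambda_{w(E)^{-1}(k)}\bigr)\varepsilon_k,
\]
and on the other,
\[
\lambda-w(E)\lambda=\sum_{i<j}x(E)_{i,j}\alpha_{i,j}=\sum_{k=1}^n(p_k-n_k)\varepsilon_k,
\]
where $p_k=\sum_{j>k}x(E)_{k,j}$ and $n_k=\sum_{i<k}x(E)_{i,k}$. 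Both coefficient sequences have vanishing total sum, so comparing them in the root lattice yields the pointwise identity $\lambda_k-\lambda_{w(E)^{-1}(k)}=p_k-n_k$ for each $k$.

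Next, I would relabel $i=w(E)^{-1}(k)$ to rewrite the right-hand side of the proposition as
\[
\sum_{i=1}^n|\lambda_i-\lambda_{w(E)(i)}|=\sum_{k=1}^n|p_k-n_k|,
\]
while a direct count of the contributions of each $x(E)_{i,j}$ gives $\sum_{k=1}^n(p_k+n_k)=2b(E)$. Because $x(E)\in P_\lambda$ has nonnegative entries, the numbers $p_k$ and $n_k$ are nonnegative, so the proposition reduces to the pointwise assertion $p_kn_k=0$ for every $k$, equivalently $|p_k-n_k|=p_k+n_k$.

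Finally, this ``unique-side'' property is exactly the observation already exploited inside the proof of Lemma~\ref{injection}: if both $p_k>0$ and $n_k>0$, then there would exist segments $[k,j]\in E$ and $[i,k]\in E$ with $i<k<j$, whose intersection is the single point $\{k\}$; being nonempty yet outside $R$, it cannot belong to $E$, contradicting $E\in{\bf R}_p$. Hence for each $k$ at least one of $p_k$, $n_k$ vanishes, and summing the resulting $p_k+n_k=|p_k-n_k|$ completes the argument. The only real obstacle is the algebraic bookkeeping in the first step, namely passing cleanly from $\omega$-coordinates to $\varepsilon$-coordinates; the rest is a re-invocation of a property already established for ${\bf R}_p$.
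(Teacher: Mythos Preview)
Your argument is correct, but it differs from the paper's. The paper simply observes that both sides are linear in~$\lambda$ (for dominant $\lambda$ the sign of $\lambda_i-\lambda_{w(E)(i)}$ is determined by whether $i\le w(E)(i)$, so the absolute value is linear) and then checks the identity on fundamental weights, which is immediate.

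Your route is more explicit: you compare the two $\varepsilon$-expansions of $\lambda-w(E)\lambda$, identify the coefficient at $\varepsilon_k$ as $p_k-n_k$, and then use the ``unique-side'' property of ${\bf R}_p$ (a point of $[1,n]$ cannot be a left endpoint of one segment in $E$ and a right endpoint of another) to conclude $p_kn_k=0$. This is a genuine alternative argument. In fact it has the pleasant side effect of \emph{explaining} the linearity that the paper merely asserts: your identity $|\lambda_k-\lambda_{w(E)^{-1}(k)}|=p_k+n_k$ makes the right-hand side visibly linear in $\lambda$. The paper's approach is shorter on the page, but yours is more self-contained and does not require the reader to supply the linearity justification or the fundamental-weight verification.
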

\begin{proof}
The right- and left-hand sides are linear in $\lambda$. Hence it suffices to prove the claim for 
fundamental weights, which is clear.  
\end{proof}

\section{Simple vertices}\label{simpsec}
The goal of this section is to prove Theorems \ref{simpleverts} and \ref{simpleperm}. 
We assume that $\la$ is regular.

The  defining inequalities of the polytope $P_\la$ are of two sorts: either $x_{i,j}\ge 0$ or 
$S(x,d)\le M(d)$. Therefore, a simple vertex $x$ is defined by $n(n-1)/2$ equalities of two sorts:
either a coordinate $x_{i,j}$ vanishes or the value of $S(x,d)$ is maximal possible for a Dyck path $d$.
For a simple vertex $x$ we denote by $Z(x)\subset \{(i,j): 1\le i<j\le n\}$ the set consisting of pairs
$(i,j)$ such that $x_{i,j}=0$. We also denote by $D(x)$ the set
of Dyck paths $d$ such that $S(x,d)=M(d)$ (in particular, $|Z(x)|+|D(x)|=n(n-1)/2$).

We first prove the following lemma.
\begin{lemma}\label{nv}
Let $x$ be a simple vertex of $P_\la$ and let $d\in D(x)$. Then for any peak or a valley $(i,j)$ of $d$
the coordinate $x_{i,j}$ does not vanish.
\end{lemma}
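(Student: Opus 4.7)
The plan is to argue by contradiction. Since $x$ is a simple vertex, the $n(n-1)/2$ gradients of the active constraints at $x$ must be linearly independent in $U^*$: explicitly, these are the coordinate functionals $y\mapsto y_{k,l}$ for $(k,l)\in Z(x)$ together with the path functionals $y\mapsto S(y,d')$ for $d'\in D(x)$. Assuming $x_{i,j}=0$ for some peak or valley $(i,j)$ of a path $d\in D(x)$, the strategy is to exhibit a nontrivial linear dependence among these vectors.

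Consider first the peak case with $j\ge i+3$. The key construction is a local swap: set $d'=(d\setminus\{(i,j)\})\cup\{(i+1,j-1)\}$. Since $(i,j-1)$ and $(i+1,j)$ appear as the immediate neighbors of $(i,j)$ in $d$, the two replacement steps $(i,j-1)\to(i+1,j-1)$ and $(i+1,j-1)\to(i+1,j)$ are legal Dyck moves, so $d'$ is a Dyck path; moreover $M(d')=M(d)$ because this quantity depends only on the extremal $i$-coordinates of the path. Combining $S(x,d)=M(d)$, $x_{i,j}=0$, and $S(x,d')\le M(d')$ forces $x_{i+1,j-1}\le 0$, hence $x_{i+1,j-1}=0$ and $d'\in D(x)$. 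The identity $S(y,d)-S(y,d')=y_{i,j}-y_{i+1,j-1}$, valid for all $y$, then exhibits a nontrivial linear dependence among four active constraints, contradicting simplicity.

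When $j=i+2$ the target $(i+1,i+1)$ is not a valid pair, so the above swap is illegal. Here I would instead split: the three consecutive elements $(i,i+1),(i,i+2),(i+1,i+2)$ of $d$ straddling the peak have their outer members in the top row, so the prefix $d_L$ of $d$ ending at $(i,i+1)$ and the suffix $d_R$ starting at $(i+1,i+2)$ are themselves Dyck paths. Since $M(d_L)+M(d_R)=M(d)$, tightness at $d$ together with $x_{i,i+2}=0$ forces tightness at both $d_L$ and $d_R$, so $d_L,d_R\in D(x)$, and the identity $S(y,d)=S(y,d_L)+S(y,d_R)+y_{i,i+2}$ again contradicts simplicity.

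The valley case is entirely symmetric to the main peak case: swapping $(i,j)$ for $(i-1,j+1)$ in $d$ yields a Dyck path $d'$ with $M(d')=M(d)$, and the same chain of implications delivers $x_{i-1,j+1}=0$, $d'\in D(x)$, and the dependence $S(y,d)-S(y,d')=y_{i,j}-y_{i-1,j+1}$. Here $(i-1,j+1)$ is automatically valid because being a valley forces $i\ge 2$ and $j\le n-1$, so no edge case arises. The only nonroutine part of the argument is spotting that the peak subcase $j=i+2$ requires the splitting trick rather than the swap; once both constructions are in place, the remaining verifications are short.
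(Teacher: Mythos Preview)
Your proof is correct and follows essentially the same approach as the paper: the swap $d\mapsto d'$ replacing the peak $(i,j)$ by $(i+1,j-1)$ (respectively the valley by $(i-1,j+1)$) in the generic case, and the split into $d_L$ and $d_R$ in the boundary case $j=i+2$. Your observation that the valley case has no boundary subcase (since $(i-1,j+1)$ is always a valid pair) is correct and makes explicit what the paper leaves as ``very similar''.
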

\begin{proof}
We prove the claim for a peak, the proof for a valley is very similar. 
Assume that $x_{i,j}=0$
for a peak $(i,j)$ of $d\in D(x)$. 

First, let $j-i>2$. Then let $d'=d\setminus (i,j)\cup (i+1,j-1)$
be a new Dyck path. Since $x_{i,j}=0$, one has  
\[
M(d')\ge S(x,d')\ge S(x,d)=M(d).
\]
Now the equality $M(d')=M(d)$ implies $S(x,d')=S(x,d)$ and thus $x_{i+1,j-1}=x_{i,j}=0$. 
But the left-hand sides of the four equalities $S(x,d)=M(d)$, $S(x,d')=M(d')$, $x_{i,j}=0$ and $x_{i+1,j-1}=0$
are linearly dependent, which contradicts the simplicity of vertex $x$. 

Now let $j-i=2$. Let  $d',d''\subset d$ be two Dyck paths (not containing $(i,j)$) 
defined by the condition $d=d'\cup (i,j)\cup d''$. Then since $x_{i,j}=0$ and $j-i=2$, we have
$S(x,d')=M(d')$, $x_{i,j}=0$, $S(x,d'')=M(d'')$ and $S(x,d)=M(d)$. Since $M(d)=M(d')+M(d'')$ and
the linear forms $x_{i,j}$, $S(x,d')$ and $S(x,d'')$ sum up to $S(x,d)$, we obtain a contradiction again.   
\end{proof}

Now let us prove that any simple vertex of $P_\la$ is of the form $x(E)$.
\begin{proposition}\label{simpE}
Let $x$ be a simple vertex of the polytope $P_\la$. Then there exists $E\subset R$ such that 
$x=x(E)$.
\end{proposition}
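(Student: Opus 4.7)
The plan is to establish $x = x(E)$ for the choice $E = \{[k,l] : x_{k,l} > 0\}$. Since $x_{k,l} = 0$ holds automatically for $[k,l] \notin E$, and the defining equations of $x(E)$ determine the remaining coordinates by induction on segment length via a triangular linear system (the equation attached to $[i,j]\in E$ involves $x_{i,j}$ together with $x_{k,l}$ for shorter $[k,l]\in E$ lying on $d^{i,j}$), the entire task reduces to proving
\[
S(x, d^{k,l}) = M(d^{k,l}) \quad \text{for every } [k,l] \in E.
\]

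The approach is linear-algebraic, centered on the algebraic peak-valley identity implicit in equation~\eqref{pvdecomp}: for any Dyck path $d$ one has the vector identity
\[
\chi_d = \sum_{\text{peak}\,(i,j)\text{ of }d} \chi_{d^{i,j}} - \sum_{\text{valley}\,(i,j)\text{ of }d} \chi_{d^{i,j}}
\]
in $\mathbb{R}^N$ (where $N = n(n-1)/2$ and $\chi_d$ is the $\{0,1\}$-indicator vector of the pairs on $d$), and the identical decomposition holds with $M$ in place of $\chi$ by a short telescoping check. Set $V = \spanned\{\chi_d : d \in D(x)\}$ and $W = \spanned\{\chi_{d^{i,j}} : [i,j] \in E\}$. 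Combining the decomposition with Lemma~\ref{nv}, which says that every peak and valley of $d \in D(x)$ lies in $E$, gives $V \subseteq W$, hence $\dim V \le |E|$. Conversely, since $x$ is a vertex, freezing the coordinates in $Z(x)$ to zero reduces the active Dyck path constraints to a system $(\chi_d|_E)\cdot y_E = M(d)$ on $\mathbb{R}^E$ with unique solution $y_E = x|_E$; this forces the projection of $V$ onto $\mathbb{R}^E$ to be surjective, so $\dim V \ge |E|$. Therefore $V = W$, and in particular $\chi_{d^{i,j}} \in V$ for every $[i,j] \in E$.

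The final step is to upgrade this span equality into the numerical equality $S(x, d^{i,j}) = M(d^{i,j})$, and the plan is to pass to $\mathbb{R}^{N+1}$ via $\tilde\chi_d = (\chi_d, M(d))$, defining $\tilde V$, $\tilde W$ analogously. Both the $\chi$-identity and the $M$-identity combine into the lifted peak-valley identity for $\tilde\chi$, so $\tilde V \subseteq \tilde W$ and $\dim \tilde W \le |E|$. For $d \in D(x)$ the vector $\tilde\chi_d$ lies in the hyperplane $H = (x, -1)^\perp$ (this is simply $S(x,d) = M(d)$), and evaluating any linear dependence $\sum c_d \chi_d = 0$ at $x$ forces $\sum c_d M(d) = 0$, so the projection $\tilde V \to V$ is injective and hence an isomorphism, giving $\dim \tilde V = |E|$. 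Dimension equality then yields $\tilde V = \tilde W$, whence $\tilde\chi_{d^{i,j}} \in \tilde V \subseteq H$ for every $[i,j] \in E$, and this is precisely the desired equality $S(x, d^{i,j}) = M(d^{i,j})$.

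The main obstacle will be the $\mathbb{R}^{N+1}$ lifting step, which is the device that converts the purely linear information $V = W$ into the required scalar identity; the companion peak-valley decomposition for $M(d)$ is a short telescoping verification, and all remaining content is careful dimension counting supported by Lemma~\ref{nv}.
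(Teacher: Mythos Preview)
Your proof is correct and follows essentially the same route as the paper's: both use the peak--valley decomposition combined with Lemma~\ref{nv} to obtain $V\subseteq W$, then a dimension count to get $V=W$, and finally transport the identity to the $M$-values. The only cosmetic differences are that the paper establishes $\dim V=|E|$ by quoting $|Z(x)|+|D(x)|=n(n-1)/2$ together with linear independence of the two families, whereas you argue via surjectivity of the projection to $\mathbb R^E$; and the paper's terse ``obviously'' in the last step is exactly what your $\mathbb R^{N+1}$ lifting makes explicit.
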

\begin{proof}
Let $d\in D(x)$ and let $P(d)$, $V(d)$ be the sets of peaks and valleys of $d$. Then one has
\begin{align*}
S(x,d) & =\sum_{(i,j)\in P(d)} S(x,d^{i,j}) - \sum_{(i,j)\in V(d)} S(x,d^{i,j}),\\
M(d) & =\sum_{(i,j)\in P(d)} M(d^{i,j}) - \sum_{(i,j)\in V(d)} M(d^{i,j}),
\end{align*}
and hence
\begin{equation}\label{PV}
\sum_{(i,j)\in P(d)} S(x,d^{i,j}) - \sum_{(i,j)\in V(d)} S(x,d^{i,j})=
\sum_{(i,j)\in P(d)} M(d^{i,j}) - \sum_{(i,j)\in V(d)} M(d^{i,j}).
\end{equation}
Lemma \ref{nv} tells us that for all peaks and valleys $(i,j)$ of Dyck paths $d\in D(x)$ the coordinates
$x_{i,j}$ are strictly positive. Therefore,~\eqref{PV} tell us that the space generated by the linear forms $S(\cdot,d)$, $d\in D(x)$ is contained in the space generated by the linear forms $S(\cdot,d^{i,j})$, $x_{i,j}>0$. Both of these sets of forms are linearly independent and are equal in size, hence the linear spans coincide. Consequently, for each pair $(i,j)$ with $x_{i,j}>0$ there exists a linear combination of the left-hand sides of \eqref{PV} giving $S(x,d^{i,j})$. Obviously, the right-hand side of such a linear combination is equal to $M(d^{i,j})$. We conclude that $x=x(E)$, where $E$ is the set of all $[i,j]$ such that $x_{i,j}> 0$.  
\end{proof}

To proceed we will require
\begin{lemma}\label{lemma}
Let $E\in{\bf R}_p$. Then for any $[i,j]\in E$ we have $x(E)_{i,j}>0$. Moreover, let Dyck path $d$ be such that $S(x(E),d)=M(d)$. Then we have $[i,j]\in E$ whenever $(i,j)$ is a peak or a valley of $d$.
\end{lemma}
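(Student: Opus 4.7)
The plan is to deduce both parts of the lemma from Lemma \ref{super}, which under the regularity assumption on $\lambda$ gives a \emph{strict} inequality. For the positivity claim, I would apply Lemma \ref{super} to the path $d^{i,j}$, whose unique peak is $(i,j)$. Since $[i,j] \in E$, the definition of $x(E)$ gives $S(x(E), d^{i,j}) = M(d^{i,j})$; combined with the strict inequality $S(x(E), d^{i,j}) - x(E)_{i,j} < M(d^{i,j})$, this forces $x(E)_{i,j} > 0$.

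For the second statement, the peak case runs along the same lines: if $(i,j)$ is a peak of $d$ and $S(x(E),d) = M(d)$, Lemma \ref{super} yields $S(x(E),d) - x(E)_{i,j} < M(d) = S(x(E),d)$, so $x(E)_{i,j} > 0$, and because coordinates of $x(E)$ outside $E$ vanish we conclude $[i,j] \in E$.

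The step I would call the main obstacle, though it is still quite short, is the valley case, since Lemma \ref{super} is stated only for peaks. My plan is to locate the two peaks of $d$ flanking a valley $(i', j')$. Tracing $d$ backward from the predecessor $(i'-1, j')$ along its up-right run produces a peak $(i_L, j')$; the fact that the path must start in the top row while $(i'-1,j')$ already satisfies $j-i \ge 2$ rules out hitting an endpoint instead of a peak. Tracing $d$ forward from $(i', j'+1)$ along its down-right run analogously produces a peak $(i', j_R)$, and one has $i_L < i' \le j' < j_R$. By the peak case just established, both $[i_L, j']$ and $[i', j_R]$ belong to $E$, and applying the ${\bf R}_p$ intersection property to these two segments yields $[i_L, j'] \cap [i', j_R] = [i', j'] \in E$, as desired.
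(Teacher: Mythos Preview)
Your proposal is correct and follows essentially the same approach as the paper: both parts are deduced from the strict form of Lemma~\ref{super}, and the valley case is handled via the $\mathbf{R}_p$ intersection property applied to the two flanking peaks. In fact the paper's own proof leaves the valley case entirely implicit (the reader is meant to recall, as in the derivation of~\eqref{pvdecomp}, that a valley's segment is the intersection of the segments of its two neighboring peaks), so your explicit treatment of this step is a welcome clarification.
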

\begin{proof}
Since $\lambda$ is regular, the first statement follows from Lemma~\ref{super} applied to $d^{i,j}$ and peak $(i,j)$. 

If $d$ has a peak $(i,j)$ with $[i,j]\notin E$, then Lemma~\ref{super} applied to $d$ and peak $(i,j)$ implies that $S(x(E),d)<M(d)$ which contradicts our assumption on $d$.
\end{proof}

Now let ${\bf R}_s$ be the set of $E\subset R$ such that if two segments of $E$ intersect, then one is contained inside the other.
In particular, ${\bf R}_s\subset {\bf R}_p$.  
\begin{theorem}\label{simpequiv}
For $E\subset R$ the point $x(E)$ is a simple vertex of $P_\lambda$ if and only if $E\in {\bf R}_s$.
\end{theorem}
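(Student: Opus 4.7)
The plan is to analyze the tight facets of $P_\lambda$ at $x(E)$ via a natural collection of $\binom{n}{2}$ ``canonical'' tight constraints, show that their normals form a unimodular basis, and prove that $E\in{\bf R}_s$ is exactly the condition under which no further tight facets appear. I work under the assumption $E\in{\bf R}_p$ throughout; if $E\notin{\bf R}_p$ a small separate argument, essentially checking one Dyck-path inequality coming from a crossing in $E$, shows $x(E)\notin P_\lambda$ and hence that $x(E)$ is not a simple vertex.

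The first step is to identify the tight constraints at $x(E)$. By Lemma~\ref{lemma} the active coordinate hyperplanes are exactly $\{y_{k,l}=0 : [k,l]\notin E\}$, and by the peak--valley decomposition~\eqref{pvdecomp} together with Lemma~\ref{lemma}, a Dyck path $d$ satisfies $S(x(E),d)=M(d)$ iff every peak and every valley of $d$ has its endpoints forming a segment in $E$. In particular each $d^{i,j}$ with $[i,j]\in E$ is tight; together with the coordinate constraints these form the \emph{canonical} system of $\binom{n}{2}$ tight constraints. To show their normals form a $\mathbb Z$-basis, consider the map $T\colon\mathbb R^R\to\mathbb R^R$ defined by $T(y)_{k,l}=y_{k,l}$ for $[k,l]\notin E$ and $T(y)_{k,l}=S(y,d^{k,l})$ for $[k,l]\in E$. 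Ordering coordinates by segment length $l-k$, the matrix of $T$ is upper-triangular with unit diagonal since $S(y,d^{k,l})=y_{k,l}+(\text{strictly shorter-length terms})$, so $\det T=1$. Hence the canonical cone $K_0$ defined by these inequalities is simplicial and unimodular; the tangent cone $K$ at $x(E)$ satisfies $K\subseteq K_0$, and $x(E)$ is simple precisely when $K=K_0$.

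For the forward direction, suppose $E\in{\bf R}_s$. I will show every tight Dyck path has a single peak, hence coincides with some $d^{i,j}$, $[i,j]\in E$. If a tight path had two consecutive peaks at $[i_1,j_1],[i_2,j_2]\in E$, then the valley between them would sit at $(i_2,j_1)$ with $i_2<j_1$, so $[i_1,j_1]\cap[i_2,j_2]=[i_2,j_1]$ would be a positive-length segment properly contained in each of them, contradicting the laminar condition. Hence $K=K_0$ and $x(E)$ is simple.

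For the backward direction, suppose $E\in{\bf R}_p\setminus{\bf R}_s$ and pick $[a,b],[c,d]\in E$ with $a<c<b<d$; by ${\bf R}_p$ also $[c,b]\in E$. Consider the Dyck path
\[
d^*=\bigl((a,a+1),\ldots,(a,b),(a+1,b),\ldots,(c,b),(c,b+1),\ldots,(c,d),(c+1,d),\ldots,(d-1,d)\bigr),
\]
whose peaks $(a,b),(c,d)$ and valley $(c,b)$ all correspond to segments in $E$; in particular $S(x(E),d^*)=M(d^*)$ and $S(\cdot,d^*)=S(\cdot,d^{a,b})+S(\cdot,d^{c,d})-S(\cdot,d^{c,b})$. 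Using $T$, pick $y\in\mathbb R^R$ with $y_{k,l}=0$ for $[k,l]\notin E$, $S(y,d^{i,j})=0$ for $[i,j]\in E\setminus\{[c,b]\}$, and $S(y,d^{c,b})=-1$. Then $y\in K_0$, yet $S(y,d^*)=0+0-(-1)=1>0$, so $y\notin K$. Therefore $K\subsetneq K_0$ and $x(E)$ carries strictly more than $\binom{n}{2}$ tight facets, so it is not simple. The main obstacle is this backward direction: one must exhibit a tight constraint provably independent of the canonical ones, and the explicit choice of $y$ furnished by the unimodularity of $T$ is what makes this work.
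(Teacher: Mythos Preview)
Your approach is essentially the same as the paper's: both isolate the canonical system of $\binom n2$ tight constraints (the coordinate hyperplanes for $[k,l]\notin E$ together with the single-peak paths $d^{i,j}$ for $[i,j]\in E$), observe that under the laminar condition $E\in{\bf R}_s$ every tight Dyck path is forced to have a single peak and hence to be one of the $d^{i,j}$'s, and for the converse exhibit the two-peak path (your $d^*$, the paper's $d$) coming from a crossing $[a,b],[c,d]\in E$ as an extra tight constraint linearly dependent on three canonical ones. Your explicit triangular argument for unimodularity via the map $T$ is a welcome addition that the paper leaves implicit.

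Two small points. First, your final inference in the backward direction---``$K\subsetneq K_0$, hence more than $\binom n2$ tight facets, hence not simple''---is stated loosely: in general a full-dimensional cone strictly inside a simplicial unimodular cone can itself be simplicial unimodular, and the constraint from $d^*$ is not a priori a facet of $P_\lambda$. What is really being used is that at a simple vertex the tight defining inequalities are linearly independent (equivalently $|Z(x)|+|D(x)|=\binom n2$); this is exactly the assertion the paper records at the top of the section and invokes when it says the dependence ``contradicts the simplicity of $x(E)$''. So your argument and the paper's rest on the same step. Second, your side remark that $E\notin{\bf R}_p$ forces $x(E)\notin P_\lambda$ is plausible but not actually argued; the paper sidesteps this case split by working with the specific $E$ produced in Proposition~\ref{simpE} (namely the support of $x$), for which the crossing argument applies directly.
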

\begin{proof}
Suppose that $x(E)$ is a simple vertex but we have two segments $[i,j],[k,l]\in E$ with a nonempty intersection different from both of the segments. Assuming that $i<k$, this intersection is $[k,j]$. Let $d$ be a Dyck path defined by
\[
d=((d^{i,j}\cup d^{k,l})\setminus d^{k,j})\cup (k,j).
\]
Then $d$ has peaks $(i,j)$ and $(k,l)$ and only one valley $(k,j)$. Hence
\[
S(x(E),d)+S(x(E),d^{k,j})=S(x(E),d^{i,j})+S(x(E),d^{k,l}).
\]
Since $M(d)+M(d^{k,j})=M(d^{i,j})+M(d^{k,l})$ and $S(x(E),d^{i,j})=M(d^{i,j})$, $S(x(E),d^{k,l})=M(d^{k,l})$ but $S(x(E),d)\le M(d)$, $S(x(E),d^{k,j})\le M(d^{k,j})$,
we conclude that 
\[
S(x(E),d)= M(d),\ S(x(E),d^{k,j})=M(d^{k,j}).
\]
But this contradicts the simplicity of $x(E)$.

Now let $E$ be a set in ${\bf R}_s$. We prove that $x(E)$ is a simple vertex. To do so it suffices to verify two facts. First, that for $[i,j]\in E$ we have $x(E)_{i,j}\neq 0$ and, second, that for a Dyck path $d$ different from any $d^{i,j}$ with $[i,j]\in E$ we have $S(x(E),d)<M(d)$. The first fact is immediate from the first part of Lemma~\ref{lemma} and the second fact follows from the second part of Lemma~\ref{lemma} in view of the definition of ${\bf R}_s$.
\end{proof}

Since ${\bf R}_s\subset {\bf R}_p$, all the simple vertices are permutation vertices. Therefore it is natural to
ask when a vertex $T_w$ is simple.  
\begin{theorem}
A vertex $T_w$ is simple if and only if for any $1\le i<j\le n$ the condition $w^{-1}(j)\le i$ implies $w^{-1}(i+1)\le j$. 
The number of simple vertices is equal to the $(n-1)$-st (large) Schr\"oder number.
\end{theorem}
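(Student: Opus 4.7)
By Theorem~\ref{simpequiv} the simple vertices are exactly the points $x(E)$ with $E\in{\bf R}_s$, and since ${\bf R}_s\subseteq{\bf R}_p$ each such $x(E)$ is the permutation vertex $T_{w(E)}$ by Lemma~\ref{bijection}. So it suffices to show that $\Psi$ restricts to a bijection from ${\bf R}_s$ onto the set $\mathcal C\subseteq S_n$ of permutations $w$ satisfying
\[
w^{-1}(j)\le i\ \Longrightarrow\ w^{-1}(i+1)\le j\quad\text{for all }1\le i<j\le n,
\]
and, separately, that $|\mathcal C|$ equals $r_{n-1}$, the $(n-1)$-st large Schr\"oder number.

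For the characterization I plan to use the explicit chain construction from the proof of Lemma~\ref{injection}, which computes $w(E)(i)$ for any $E\in{\bf R}_p$. For the forward inclusion $\Psi({\bf R}_s)\subseteq\mathcal C$, a laminar $E$ has a rooted-forest structure whose roots are its maximal intervals $[a_1,b_1],\dots,[a_m,b_m]$; these are pairwise disjoint and non-touching, so their transpositions commute and $w(E)$ splits as a product supported on the separate intervals, with outermost factor $(a_k,b_k)$ inside each root and a recursive sub-product beneath. An induction on the depth of the forest then reduces the implication at any pair $(i,j)$ either to the same implication for a proper sub-laminar-family on some $[a_k+1,b_k-1]$, or to a trivial case in which $w(E)$ fixes $j$ or $i+1$.

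For the backward inclusion I argue contrapositively. Suppose $E\in{\bf R}_p\setminus{\bf R}_s$; then there exist $[a,b],[c,d]\in E$ with $a<c\le b<d$ (and hence $[c,b]\in E$ by the ${\bf R}_p$ axiom). Choose such a pair \emph{innermost}, for example one minimising $b-c$ among all crossings of $E$. I would then show via the chain construction that the chain starting at $a$ yields $w(E)(a)=b$ and the chain starting at $d$ yields $w(E)(d)=c$, so that $w(E)^{-1}(b)=a\le c-1$ and $w(E)^{-1}(c)=d>b$, and the pair $(i,j)=(c-1,b)$ witnesses a failure of the condition. I expect the main technical obstacle to be verifying, through careful tracking of the Lemma~\ref{injection} chain, that this innermost choice of crossing always yields chains behaving as described.

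For the enumeration, let $f(n)=|{\bf R}_s|$ on positive-length subintervals of $[1,n]$, with $f(0)=f(1)=1$. Partition ${\bf R}_s$ according to the largest $k\ge 2$ for which $[1,k]\in E$, or declare ``$k=1$'' if no interval of $E$ has $1$ as an endpoint. The ``$k=1$'' case contributes $f(n-1)$. If $k\ge 2$ then laminarity forces every other interval of $E$ to lie inside $[1,k]$ or inside $[k+1,n]$; the sub-family on $[1,k]$ containing $[1,k]$ has count $f(k)/2$ (toggling membership of $[1,k]$ is a laminarity-preserving involution on the set of laminar families of $[1,k]$) and the sub-family on $[k+1,n]$ has count $f(n-k)$. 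Summing yields
\[
f(n)=f(n-1)+\sum_{k=2}^{n}\tfrac{1}{2}f(k)f(n-k).
\]
Converting to the generating function $F(x)=\sum_{n\ge 0}f(n)x^n$ gives the quadratic $F^2+(x-3)F+2=0$, which is precisely the equation satisfied by $1+xR(x)$, where $R(x)=\sum_{n\ge 0}r_nx^n$ is the generating function of the large Schr\"oder numbers (the power-series solution of $R=1+xR+xR^2$). Matching initial terms gives $f(n)=r_{n-1}$ for all $n\ge 1$, completing the proof.
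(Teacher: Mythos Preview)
Your backward inclusion argument contains a false claim. Take $n=6$ and
\[
E=\{[1,2],\ [1,4],\ [3,4],\ [3,6]\}\in{\bf R}_p\setminus{\bf R}_s.
\]
The only crossing is $[1,4],[3,6]$, so it is automatically innermost, with $a=1$, $b=4$, $c=3$, $d=6$. The chain from Lemma~\ref{injection} for $i=1$ starts with the shortest segment at~$1$, which is $[1,2]$, not $[1,4]$; there is no longer segment at~$2$, so $w(E)(1)=2\neq b$. Symmetrically one can arrange counterexamples to $w(E)(d)=c$. The point is that nothing in ``innermost'' prevents shorter laminar branches from hanging off $a$ or $d$ and hijacking the chain. (Interestingly, in this example one still has $w(E)^{-1}(b)\le c-1$ and $w(E)^{-1}(c)>b$, so your target inequality at $(i,j)=(c-1,b)$ survives; but establishing it requires a different argument than the one you outline.)

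By contrast, the paper does \emph{not} prove the backward inclusion directly. It proves only the forward inclusion $\Psi({\bf R}_s)\subseteq\mathcal C$ (in contrapositive form, via the algorithm for $\Psi^{-1}$ from Lemma~\ref{bijection} rather than via the forest induction you sketch), shows $|{\bf R}_s|=r_{n-1}$ by the same recurrence you derive, and then \emph{cites}~\cite{CFR2} for the fact that $|\mathcal C|=r_{n-1}$, concluding equality of the two sets by cardinality. If you want a self-contained proof avoiding that citation, you will need a corrected version of your backward argument; a natural route is to imitate the paper's forward argument in reverse, tracking the $\Psi^{-1}$ algorithm rather than the $\Psi$ chains.

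A smaller imprecision in your forward sketch: the children of a root $[a_k,b_k]$ in the laminar forest need not lie in $[a_k+1,b_k-1]$; segments such as $[a_k,c]$ with $c<b_k$ are allowed in ${\bf R}_s$. Your induction should recurse on the family $E\setminus\{[a_k,b_k]\}$ restricted to $[a_k,b_k]$, not to its interior.
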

\begin{proof}
We first show that $|{\bf R}_s|$ is equal to the $(n-1)$-st Schr\"oder number. Indeed, denote $|{\bf R}_s|=X_{n-1}$. Then we see that the number of $E\in{\bf R}_s$, such that $[1,j]\notin E$ for all $j$,
is $X_{n-2}$ while the number of $E\in{\bf R}_s$ containing the segment $[1,n]$ is $X_{n-1}/2$. Furthermore, for a $2\le k\le n-1$ the number of $E\in{\bf R}_s$ such that $[1,k]$ is the longest segment of the form $[1,j]$ contained in $E$ is equal to $X_{k-1}/2\cdot X_{n-k-1}$. We obtain the recurrence relation $$X_{n-1}=X_{n-2}+X_{n-1}/2+\sum_{k=2}^{n-1}X_{k-1}/2\cdot X_{n-k-1} \Longleftrightarrow X_{n-1}=X_{n-2}+\sum_{k=0}^{n-2} X_k X_{n-k-2}.$$ This, however, coincides with the recurrence relation for Schr\"oder numbers.

Next, recall the algorithm from the proof of  Lemma~\ref{bijection} via which we described the map $\Psi^{-1}: S_n\to {\bf R}_p$. We prove the ``only if'' part by showing that if for a pair $i<j$ we have $w^{-1}(j)\le i$ and $w^{-1}(i+1)>j$, then $\Psi^{-1}(w)\not\in {\bf R}_s$.

Indeed, we see that $w^{-1}(j)<j$ and $w^{-1}(i+1)>i+1$. Consequently, in terms of the proof of Lemma~\ref{bijection}, the number $j$ was black during the first several steps of our algorithm. Consider the longest segment of form $[k,j]$ in $\Psi^{-1}(w)$. During the step dealing with segment $[k,j]$ we must have had $u^{-1}(k)=w^{-1}(j)$ and $k$ must have been white, i.e. we must have had $k\le w^{-1}(j)$. Similarly, $i+1$ was white during the first several steps of our algorithm, consider the longest segment in $\Psi^{-1}(w)$ of the form $[i+1,l]$ and deduce that $l\ge w^{-1}(i+1)$. We see that the segments $[k,j]$ and $[i+1,l]$ intersect non-trivially and prevent $\Psi^{-1}(w)$ from lying in ${\bf R}_s$.

Finally, in \cite{CFR2} it was proved that the number of $w$ such that for any $1\le i<j\le n$ the condition $w^{-1}(j)\le i$ implies $w^{-1}(i+1)\le j$ is the $(n-1)$-st Schr\"oder number. The theorem follows.
\end{proof}

\section{Applications to type $C$}\label{Csec}

The above study of the vertices of FFLV polytopes of type $A$ can be applied to obtain similar results for type $C$. Let us first define FFLV bases and FFLV polytopes of type $C$ (see \cite{FFL2}).

Consider the symplectic Lie algebra $\mathfrak{sp}_{2n}$ with Cartan subalgebra $\mathfrak h$ and simple roots $\alpha_1,\ldots,\alpha_n\in\mathfrak h^*$ such that $(\alpha_i,\alpha_{i+1})<0$ and $\alpha_n$ is a long root. We fix a integral dominant weight $\lambda\in\mathfrak h^*$ with (nonnegative integer) coordinates $(a_1,\ldots,a_n)$ in the corresponding basis of fundamental weights. We denote the corresponding irreducible representation $L_\lambda$. (Of course, these and some of the below notation conflicts with that introduced above. Throughout this final section we will give preference to the new notation referring to $\mathfrak{sp}_{2n}$ disregarding the old one.)

Once again, the FFLV basis in $L_\lambda$ is a monomial basis parametrized by certain number triangles.  Each such triangle $T$ consists of $n^2$ numbers $T_{i,j}$ with $1\le i<j \le 2n+1-i$. We again visualize these triangles with $T_{i,j}$ and $T_{i+1,j+1}$ being, respectively, the upper-left and the upper-right neighbors of $T_{i,j+1}$, e.g. for $n=3$ we have:
\begin{center}
\begin{tabular}{ccccc}
$T_{1,2}$ && $T_{2,3}$ && $T_{3,4}$\\
&$T_{1,3}$ && $T_{2,4}$ & \\
&& $T_{1,4}$ && $T_{2,5}$ \\
&&& $T_{1,5}$ &\\
&&&& $T_{1,6}$
\end{tabular}
\end{center}
\vspace{2ex}

To define the set $\Pi_\lambda$ of triangles parametrizing the FFLV basis we use a slightly modified notion of a Dyck path. We call a sequence $$d=((i_1,j_1),\ldots,(i_N,j_N))$$ of pairs $1\le i<j \le 2n+1-i$ a {\it type $C$ Dyck path} if we have $j_1-i_1=1$, for all $1\le k\le N-1$ the element $(i_{k+1},j_{k+1})$ is either $(i_k+1,j_k)$ or $(i_k,j_k+1)$ and, lastly, we either have $j_N-i_N=1$ or $i_N+j_N=2n+1$. The difference is that now the path must not necessarily end in the top row but may as well end in the rightmost vertical column.

For a triangle $T=(T_{i,j}, 1\le i<j \le 2n+1-i)$ and type $C$ Dyck path $$d=((i_1,j_1),\ldots,(i_N,j_N))$$ we denote $$S(T,d)=\sum_{(i,j)\in d} T_{i,j}.$$ Next, if we have $j_N-i_N=1$ we define $$M(\lambda,d)=a_{i_1}+\ldots+a_{i_N},$$ otherwise (i.e. we have $i_N+j_N=2n+1$) we define $$M(\lambda,d)=a_{i_1}+\ldots+a_n.$$

We then define $\Pi_\lambda$ by saying that $T\in\Pi_\lambda$ if and only if all $T_{i,j}$ are nonnegative integers and for any type $C$ Dyck path $d$ we have $S(T,d)\le M(\lambda,d)$. 

Now for $1\le i<j \le 2n+1-i$ let $f_{i,j}\in\mathfrak{sp}_{2n}$ be a nonzero element in the root space of the negative root $$-\alpha_{i,j}=-\alpha_i-\ldots-\alpha_{j-1}$$ where for $k>n$ we set $\alpha_k=\alpha_{2n-k}$. We choose a highest weight vector $v_0\in L_\lambda$ and define the basis vector corresponding to $T\in\Pi_\lambda$ as $$v_T=\left(\prod_{i,j} f_{i,j}^{T_{i,j}}\right) v_0,$$ where an arbitrary order of the factors $f_{i,j}^{T_{i,j}}$ is fixed. The fact that such a set of vectors does indeed constitute a basis in $L_\lambda$ was established in~\cite{FFL2}. 

We proceed to define the type C FFLV polytope $P_\lambda$. Consider the space $U=\mathbb R^{n^2}$ with coordinates labeled by pairs $1\le i<j \le 2n+1-i$. Then $P_\lambda\subset U$ consists of points $x=(x_{i,j})$ such that all $x_{i,j}$ are nonnegative and for any type $C$ Dyck path $d$ we have $S(x,d)\le M(\lambda,d)$. We see that $P_\lambda$ is a convex polytope the set of integer points in which is precisely $\Pi_\lambda$. 

In order to employ the results obtained for type $A$ we also consider the algebra $\mathfrak{sl}_{2n}$ with a fixed Cartan decomposition and set of simple roots. Then for any integral dominant $\mathfrak{sl}_{2n}$-weight $\lambda_A$ we may view the corresponding type $A$ FFLV polytope $P_{\lambda_A}$ as a subset of the space $U_A=\mathbb R^{n(2n-1)}$ with coordinates labeled by pairs $1\le i<j\le 2n$. We will describe sets of vertices of $P_\lambda$ by considering various embeddings $\varphi:U\hookrightarrow U_A$ and establishing relationships between $\varphi(P_\lambda)$ and certain $P_{\lambda_A}$.

Our fist result concerning $P_\lambda$ describes the set of all of its vertices. We have the trivial embedding $\iota:U\hookrightarrow U_A$ such that for pairs $1\le i<j \le 2n+1-i$ we have $\iota(x)_{i,j}=x_{i,j}$ and the remaining $n(n-1)$ coordinates of $\iota(x)$ are zero. We will identify $U$ with $\iota(U)$, i.e. view $U$ as a subspace in $U_A$.

Consider the $\mathfrak{sl}_{2n}$-weight $\lambda_0$ with coordinates $$(a_1,\ldots,a_n,0,\ldots,0)$$ in the basis of fundamental weights. We see that $$P_\lambda=P_{\lambda_0}\cap U$$ and that $P_\lambda$ constitutes a face of $P_{\lambda_0}$, since hyperplanes of the form $\{x_{k,l}=0\}$ intersect $P_{\lambda_0}$ in its faces. Therefore, a point constitutes a vertex of $P_{\lambda}$ if and only if it constitutes a vertex of $P_{\lambda_0}$ and lies in $U$. 

Now, similarly to Section~\ref{allsec}, we have the order relation $\preceq$ on the set of pairs $1\le i<j\le 2n$ and the subsets $Q_i$ therein for all $1\le i\le 2n-1$. For each $1\le i\le n$ we define a subset $Q^C_i\subset Q_i$ consisting of those $(a,b)\in Q_i$ for which 
$a+b\le 2n+1$. We also denote the fundamental weights of $\mathfrak{sl}_{2n}$ via $\omega^A_1,\ldots,\omega^A_{2n-1}$.

\begin{theorem}\label{callverts}
A point in $U$ constitutes a vertex of $P_\lambda$ if and only if it has the form $\sum_{i=1}^n a_i\chi_{A_i}$ for a tuple of $\preceq$-antichains $(A_i\subset Q^C_i,1\le i\le n)$ such that for all $1\le i\le n-1$ the point $\chi_{A_i}+\chi_{A_{i+1}}$ constitutes a vertex of $P_{\omega^A_i+\omega^A_{i+1}}$.
\end{theorem}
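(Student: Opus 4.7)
The plan is to reduce the type~$C$ statement to the type~$A$ results of Section~\ref{allsec} via the identification $P_\lambda=P_{\lambda_0}\cap U$ noted in the excerpt. First I would verify this equality and show that $P_\lambda$ is a face of $P_{\lambda_0}$: a type~$C$ Dyck path ending on the anti-diagonal $i+j=2n+1$ completes to a type~$A$ Dyck path by appending the elements $(i_N+1,j_N),\ldots,(j_N-1,j_N)$, and these extra coordinates both vanish on $U$ and contribute nothing to $M(\lambda_0,\cdot)$ because their indices all exceed $n$. Since $U$ is cut out of $U_A$ by equations $x_{k,l}=0$ with $k+l>2n+1$, each of which already appears as a defining inequality of $P_{\lambda_0}$, the intersection $P_\lambda$ is a face of $P_{\lambda_0}$, and its vertices are precisely those vertices of $P_{\lambda_0}$ that lie in $U$.

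By Theorem~\ref{allsing} every vertex of $P_{\lambda_0}$ has the form $\sum_i a_i\chi_{A_i}$ for a nice tuple $(A_1,\ldots,A_{2n-1})$; since the last $n-1$ coordinates of $\lambda_0$ vanish this collapses to $\sum_{i=1}^n a_i\chi_{A_i}$ and, by the remark following Theorem~\ref{allsing}, the vertex depends only on $A_1,\ldots,A_n$. The condition $v\in U$ forces $A_i\subset Q^C_i$ at every index $i\le n$ with $a_i>0$, and antichains at indices with $a_i=0$ may be harmlessly replaced by subsets of $Q^C_i$ without altering the vertex. By Theorem~\ref{allverts} niceness is equivalent to the Minkowski adjacency $\chi_{A_i}+\chi_{A_{i+1}}$ being a vertex of $P_{\omega^A_i+\omega^A_{i+1}}$ for every $1\le i\le 2n-2$; restricting to $i\le n-1$ yields the forward direction of the theorem immediately.

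For the reverse direction, given antichains $A_1\subset Q^C_1,\ldots,A_n\subset Q^C_n$ satisfying the $n-1$ adjacency hypotheses, I would extend to a nice $(2n-1)$-tuple by the explicit rule
\[
A_{n+k}=A_n\cap Q_{n+k}=\{(a,b)\in A_n:b\ge n+k+1\}\qquad(1\le k\le n-1),
\]
producing a nested sequence of antichains $A_{n+k}\subset A_{n+k-1}$. Each new adjacency $\chi_{A_{n+k-1}}+\chi_{A_{n+k}}$ is then verified via Proposition~\ref{tworoots}: the graph $\Gamma$ has all of its nodes inside the antichain $A_{n+k-1}$ and therefore no edges; the (at most one) element of $A_{n+k-1}\setminus A_{n+k}$ has the form $(k',n+k)$, lies outside $Q_{n+k-1}\cap Q_{n+k}$, and is its own isolated component which equals $\Delta$; every remaining element of $A_{n+k-1}$ lies in $A_{n+k-1}\cap A_{n+k}$. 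The main obstacle is justifying the junction at $k=1$, where the hypotheses give no direct information about the interaction of $A_n$ with antichains in $Q_{n+1}$; the choice $A_{n+1}=A_n\cap Q_{n+1}$ resolves this by keeping the whole combinatorial discussion confined to $A_n$ itself.
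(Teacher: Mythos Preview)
Your proposal is correct and follows essentially the same approach as the paper: reduce to the type~$A$ result via the face identification $P_\lambda=P_{\lambda_0}\cap U$, invoke Theorems~\ref{allequiv} and~\ref{allsing} for the ``only if'' direction, and for the ``if'' direction extend the tuple by the rule $A_{n+k}=A_n\cap Q_{n+k}$. You supply more detail than the paper does in verifying the extension via Proposition~\ref{tworoots}; the paper simply asserts that this extension is nice.
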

Before we prove the theorem note that this description is explicit, since Proposition~\ref{tworoots} tells us exactly when $\chi_{A_i}+\chi_{A_{i+1}}$ constitutes a vertex of $P_{\omega^A_i+\omega^A_{i+1}}$.
\begin{proof}[Proof of Theorem~\ref{callverts}]
The ``only if'' part follows from the fact that every vertex of $P_\lambda$ is a vertex of $P_{\lambda_0}$ via Theorems~\ref{allequiv} and~\ref{allsing}.

Let a tuple of antichains $(A_i\subset Q^C_i,1\le i\le n)$ be such that for all $1\le i\le n-1$ the point $\chi_{A_i}+\chi_{A_{i+1}}$ constitutes a vertex of $P_{\omega^A_i+\omega^A_{i+1}}$. To prove the ``if'' part we must show that this tuple may be extended to a nice (as defined on page~\pageref{nice} but with respect to $\mathfrak{sl}_{2n}$) tuple $(A_i,1\le i\le 2n-1)$. However, such an extension may be obtained by setting $A_i=Q_i\cap A_n$ for all $n<i\le 2n-1$.
\end{proof}

Next we describe the ``Weyl group translate'' vertices, similar to the permutation vertices in type $A$. 

Consider the Weyl group $$W\cong S_n\ltimes (\mathbb Z/2)^n$$ of $\mathfrak{sp}_{2n}$. For every $w\in W$ we have a unique $T_w\in\Pi_\lambda$ such that the vector $v_{T_w}$ has weight $w\lambda$. We will show that each such $T_w$ constitutes a vertex of $P_\lambda$ and describe it explicitly.

To do so we introduce a less trivial embedding $\varphi:U\to U_A$ defined as follows. For a point $x\in U$ and a pair $1\le i<j\le 2n$ we set
$$
\varphi(x)_{i,j}=
\begin{cases}
x_{i,j}, &\text{ if }i+j<2n+1,\\
2x_{i,j}, &\text{ if }i+j=2n+1,\\
x_{2n+1-j,2n+1-i}, &\text{ if }i+j>2n+1. 
\end{cases}
$$
One may say that the number triangle $(\varphi(x)_{i,j})$ is the sum of number triangle $(x_{i,j})$ and its reflection across the line $i+j=2n+1$.

To describe the vertices $T_w$ we recall the set $R$ of subsegments in $[1,2n]$ with integer endpoints and positive length and the set ${\bf R}_p\subset 2^R$ of such $E\subset R$ that $[i,j],[k,l]\in E$ and $[i,j]\cap[k,l]\neq\varnothing$ imply $[i,j]\cap[k,l]\in E$. We also define the $\mathfrak{sl}_{2n}$-weight $$\overline\lambda=(a_1,\ldots,a_{n-1},2a_n,a_{n-1},\ldots,a_1).$$ As we have shown in section~\ref{permsec} each $E\in{\bf R}_p$ defines a permutation vertex of $P_{\overline\lambda}$, we denote this vertex $x(E)$.

We say that $E\in{\bf R}_p$ is symmetric if it contains $[i,j]$ whenever it contains $[2n+1-j,2n+1-i]$.

\begin{theorem}
Whenever $E\in{\bf R}_p$ is symmetric we have $x(E)\in\varphi(P_\lambda)$ and the point $\varphi^{-1}(x(E))$ constitutes a vertex of $P_\lambda$. Moreover, there exists a bijection $\Xi:E\mapsto w_C(E)$ from the set of symmetric $E\in{\bf R}_p$ to $W$ such that $T_{w_C(E)}=\varphi^{-1}(x(E))$ for any symmetric $E\in{\bf R}_p$.
\end{theorem}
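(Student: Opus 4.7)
The plan is to relate $P_\lambda$ to the type $A$ polytope $P_{\overline\lambda}$ via the embedding $\varphi$. The image $\varphi(U)$ is precisely the fixed subspace of the linear involution $\sigma\colon U_A\to U_A$ induced by the reflection $(i,j)\mapsto(2n+1-j,2n+1-i)$, with the factor $2$ on the antidiagonal $i+j=2n+1$ accounting for the collapse of two coordinates into one. Since $\overline\lambda$ is symmetric under this reflection, $P_{\overline\lambda}$ is $\sigma$-invariant. The crucial auxiliary identity
\[
\varphi(P_\lambda)=P_{\overline\lambda}\cap\varphi(U)
\]
is obtained by matching the two sets of Dyck-path inequalities: a type $C$ Dyck path ending in the top row is already a type $A$ Dyck path with the same $S$ and $M$ values under $\varphi$, while a type $C$ Dyck path $d$ ending on the antidiagonal doubles into the symmetric type $A$ Dyck path $\bar d=d\cup\sigma(d)$, for which $S(\varphi(x),\bar d)=2S(x,d)$ and $M(\overline\lambda,\bar d)=2M(\lambda,d)$.

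Granted this identity, the first two assertions follow. For symmetric $E$ the defining equations $S(x(E),d^{i,j})=M(\overline\lambda,d^{i,j})$ for $[i,j]\in E$ are invariant under $\sigma$ (since both $E$ and $\overline\lambda$ are symmetric), so $x(E)$ is $\sigma$-fixed and lies in $\varphi(U)$; combined with $x(E)\in P_{\overline\lambda}$ this gives $x(E)\in\varphi(P_\lambda)$. A short induction on segment length, in the spirit of Lemma~\ref{super}, also shows that the antidiagonal coordinates of $x(E)$ are even, so $\varphi^{-1}(x(E))$ has integer entries. Since $x(E)$ is a vertex of $P_{\overline\lambda}$ lying on the face $\varphi(P_\lambda)=P_{\overline\lambda}\cap\varphi(U)$, it is automatically a vertex of this face, and $\varphi^{-1}(x(E))$ is a vertex of $P_\lambda$.

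For the bijection, I will use that $W\cong S_n\ltimes(\mathbb Z/2)^n$ embeds in $S_{2n}$ as the subgroup of \emph{signed permutations}, namely those $w\in S_{2n}$ commuting with the reversal $\tau\colon i\mapsto 2n+1-i$. The claim is that the bijection $\Psi$ of Lemma~\ref{bijection} restricts to a bijection between symmetric $E\in\mathbf{R}_p$ and signed permutations. This is best seen from the algorithm in the proof of Lemma~\ref{bijection}: it processes segments in order of non-decreasing length, hence symmetric pairs $[i,j]$ and $[2n+1-j,2n+1-i]$ are handled at the same stage, with self-symmetric segments through the antidiagonal at the appropriate stage. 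An inductive argument along the algorithm shows that, assuming $w$ signed, the running permutation $u$ remains signed and symmetric pairs of segments are added to $E$ together; the converse direction is analogous. The cardinalities agree, both equal to $2^n n!$.

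To finish, I match $\varphi^{-1}(x(E))$ with $T_{w_C(E)}$ by a weight computation analogous to Lemma~\ref{injection}. Using linearity in $\lambda$, it suffices to verify the identity on fundamental weights, which reduces to tracking the action of $\sigma$ together with the standard folding of the type $A$ root system of $\mathfrak{sl}_{2n}$ onto the type $C$ root system of $\mathfrak{sp}_{2n}$. The identity $\mu_{x(E)}=w(E)\overline\lambda$ on the type $A$ side then descends to $\mu_{\varphi^{-1}(x(E))}=w_C(E)\lambda$, and uniqueness of the weight vector in $\Pi_\lambda$ forces $T_{w_C(E)}=\varphi^{-1}(x(E))$. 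The main obstacle I anticipate is the clean verification of the identity $\varphi(P_\lambda)=P_{\overline\lambda}\cap\varphi(U)$; the matching of doubled type $A$ Dyck paths with type $C$ paths ending on the antidiagonal must be handled precisely, after which everything else is either a symmetry argument or a direct appeal to the type $A$ results of Sections~\ref{allsec} and~\ref{permsec}.
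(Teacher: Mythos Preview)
Your proposal is correct and follows essentially the same route as the paper: establish $\varphi(P_\lambda)=P_{\overline\lambda}\cap\varphi(U)$, observe that symmetric $E$ give $\sigma$-fixed vertices $x(E)$, match weights via the folding $\mathfrak{sl}_{2n}\to\mathfrak{sp}_{2n}$, and obtain the bijection by restricting $\Psi$ to the centralizer of $\tau$ in $S_{2n}$ (the paper invokes Theorem~\ref{callverts} for integrality rather than your inductive argument, and writes $w_C(E)$ as an explicit product, but these are cosmetic differences). One small correction: $\varphi(P_\lambda)$ is the intersection of $P_{\overline\lambda}$ with a linear subspace, not a face of $P_{\overline\lambda}$; your conclusion that a vertex of $P_{\overline\lambda}$ lying in this intersection is a vertex of the intersection is nonetheless valid, since it holds for the intersection with any convex subset.
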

\begin{proof}
The image $\varphi(U)$ consists of such $x$ that $x_{i,j}=x_{2n+1-j,2n+1-i}$ for all $1\le i<j\le 2n$. It is easily verified that $\varphi(P_\lambda)=P_{\overline\lambda}\cap\varphi(U)$. Furthermore, if $E\in{\bf R}_p$ is symmetric, then we obviously have $x(E)\in\varphi(U)$. Since $x(E)$ is a vertex of $P_{\overline\lambda}$, we conclude that $\varphi^{-1}(x(E))$ is indeed a vertex of $P_\lambda$. In particular, Theorem~\ref{callverts} shows that $\varphi^{-1}(x(E))$ is an integer point.

Now let us describe the action of $W$ explicitly. We introduce a basis $(\varepsilon_i)$ in $\mathfrak h^*$  such that $\alpha_i=\varepsilon_i-\varepsilon_{i+1}$ for $1\le i\le n-1$ and $\alpha_n=2\varepsilon_n$. For $\sigma\in S_n$ and $r\in(\mathbb Z/2)^n=\{1,-1\}^n$ consider the element $(\sigma,r)\in S_n\ltimes (\mathbb Z/2)^n=W$. Then $(\sigma,r)\varepsilon_i=r_{\sigma(i)}\varepsilon_{\sigma(i)}$ for all $1\le i\le n$. These facts concerning root systems of type $C$ may be found in~\cite{carter}.

The $i$-th fundamental weight $\omega_i$ is equal to $\varepsilon_1+\ldots+\varepsilon_i$ and, consequently, the coordinates of $\lambda$ with respect to the basis $(\varepsilon_i)$ are $$\lambda_i=a_i+\ldots+a_n.$$

We next define the map $\Xi$ by choosing a symmetric $E\in{\bf R}_p$ and setting $$w_C(E)=\prod_{\substack{[i,j]\in E\\i+j\le 2n+1}} ((i,\overline j),r^{i,j})$$ where the factors on the right are ordered by $j-i$ increasing from right to left and the following notation is used. The number $\overline j$ is equal to $j$ if $j\le n$ and $2n+1-j$ if $j>n$ and $(i,\overline j)\in S_n$ is the corresponding transposition. The element $r^{i,j}\in(\mathbb Z/2)^n$ is identical if $j\le n$. If $j>n$ the element $r^{i,j}$ has all coordinates equal to $1$ except for $r^{i,j}_i=r^{i,j}_{2n+1-j}=-1$.


Now let us recall the set of $\mathfrak{sl}_{2n}$-weights $\varepsilon^A_i=\omega^A_i-\omega^A_{i-1}$ for $1\le i\le 2n$ (where $\omega^A_0=\omega^A_{2n}=0$). For $w_A\in S_{2n}$ we have $w_A\varepsilon^A_i=\varepsilon^A_{w_A(i)}$. In Lemma~\ref{injection} we have also defined a permutation $w_A(E)\in S_{2n}$ for every $E\in{\bf R}_p$. Furthermore, denote the simple $\mathfrak{sl}_{2n}$-roots $\alpha^A_i$, $1\le i\le 2n-1$ and the positive roots $$\alpha^A_{i,j}=\alpha^A_i+\ldots+\alpha^A_{j-1}$$ for $1\le i<j\le 2n$. 


For a symmetric $E\in{\bf R}_p$ let $\mu_C(E)$ be the ($\mathfrak{sp}_{2n}$-)weight of $v_{\varphi^{-1}(x(E))}\in L_\lambda$ and let $\mu_A(E)$ be the ($\mathfrak{sl}_{2n}$-)weight of $v_{x(E)}\in L_{\overline\lambda}$. Define a linear map $\psi$ sending $\mathfrak{sp}_{2n}$-weights to $\mathfrak{sl}_{2n}$-weights by $\psi:\varepsilon_i\mapsto\varepsilon^A_i-\varepsilon^A_{2n+1-i}$ (in particular, $\psi(\lambda)=\overline\lambda$). With the help of
$$
\alpha_{i,j}=
\begin{cases}
\varepsilon_i-\varepsilon_j&\text{if }j\le n,\\
\varepsilon_i+\varepsilon_{\overline j}&\text{if }j>n
\end{cases}
$$
for $1\le i<j\le 2n+1-i$ and $\alpha^A_{i,j}=\varepsilon^A_i-\varepsilon^A_j$ for $1\le i<j\le 2n$, it is straightforward to obtain $\psi(\mu_C(E))=\mu_A(E)$. Furthermore, by comparing the definitions of $w_A$ and $w_C$ it is not hard to verify that $\psi(w_C(E)\lambda)=w_A(E)\overline\lambda$.


Since in view of Lemma~\ref{injection} we have $\mu_A(E)=w_A(E)\overline\lambda$ and, moreover, $\psi$ is injective, we conclude that $\mu_C(E)=w_C(E)\lambda$.

The map $\Xi$ is independent of $\lambda$. Moreover, if $\lambda$ is regular, then so is $\overline\lambda$ and, due to Lemma~\ref{lemma}, for symmetric $E\neq E'$ we have $x(E)\neq x(E')$ and $\varphi^{-1}(x(E))\neq\varphi^{-1}(x(E'))$. Therefore $\Xi$ is injective.

We are left to show that $\Xi$ is a bijection, i.e. the number of symmetric $E\in{\bf R}_p$ is equal to $|W|=n! 2^n$. However, visibly, for each symmetric $E$ the permutation $w_A(E)$ is symmetric in the sense that whenever $i+j=2n+1$ we have $w_A(i)+w_A(j)=2n+1$. The order of the subgroup of symmetric permutations in $S_{2n}$ is seen to be $n! 2^n$. Therefore, it suffices to show that, conversely, $\Psi^{-1}(w_A)$ is symmetric whenever $w_A$ is. This is evident from the description of $\Psi^{-1}$ found in the proof of Lemma~\ref{bijection}.
\end{proof}

Our final result describes the set of simple vertices of $P_\lambda$ in the case of a regular $\lambda$. We define the subset ${\bf R}^C_s\subset{\bf R}_p$ consisting of such symmetric $E$ that any two segments $[i,j],[k,l]\in E$ with $i+j\le 2n+1$ and $k+l\le 2n+1$ either do not intersect or have one containing the other.

\begin{theorem}
If $\lambda$ is regular, then a vertex of $P_\lambda$ is simple if and only if it has the form $\varphi^{-1}(x(E))$ for some $E\in{\bf R}^C_s$.
\end{theorem}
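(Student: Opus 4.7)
The plan is to adapt the reasoning of Section \ref{simpsec} to the symplectic setting, using the embedding $\varphi$ and the already-established type $A$ result Theorem \ref{simpequiv}. As in type $A$, the argument splits into (a) showing every simple vertex of $P_\lambda$ has the form $\varphi^{-1}(x(E))$ for some symmetric $E\in{\bf R}_p$, and (b) characterizing when such a vertex is simple.

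For step (a), I would first establish a type $C$ analogue of Lemma \ref{nv}: at any simple vertex $v$ and any tight type $C$ Dyck path $d$, every peak and every valley of $d$ carries a strictly positive coordinate. For paths ending in the top row the argument is identical to that of Lemma \ref{nv}. For paths ending on the antidiagonal $i_N+j_N=2n+1$ one must verify the analogous linear-dependency arguments: replacing a zero-coordinate peak $(i,j)$ by $(i+1,j-1)$ or invoking the $j-i=2$ short-circuit still yields a valid type $C$ Dyck path, producing a dependence among the defining constraints contradicting simplicity. Second, I would prove a type $C$ analogue of Proposition \ref{simpE}: the alternating peak-valley identity~\eqref{PV} transplants verbatim (taking care that $M(\lambda,d)$ has the truncated form $a_{i_1}+\cdots+a_n$ when $d$ ends on the antidiagonal), and from it one deduces that the linear span of the tight forms $S(\cdot,d)$ coincides with the span of the forms $S(\cdot,d^{i,j})$ attached to nonzero coordinates. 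Hence $v=\varphi^{-1}(x(E))$ with $E$ the set of $[i,j]$ for which $\varphi(v)_{i,j}>0$, and symmetry of $E$ is automatic since $v$ itself is symmetric.

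For step (b), the ``only if'' direction mirrors the first half of the proof of Theorem \ref{simpequiv}. If a symmetric $E\in{\bf R}_p$ contains two upper-half segments $[i,j],[k,l]$ with $i+j,k+l\le 2n+1$ and $i<k\le j<l$, then $k+j\le 2n+1$ and the four paths $d^{i,j}$, $d^{k,l}$, $d^{k,j}$, and $d=\bigl((d^{i,j}\cup d^{k,l})\setminus d^{k,j}\bigr)\cup(k,j)$ are all legitimate type $C$ Dyck paths lying in the upper region. The same identity
\[
S(x(E),d)+S(x(E),d^{k,j})=S(x(E),d^{i,j})+S(x(E),d^{k,l})
\]
read on the $U$ side through $\varphi$ exhibits a linear dependency among four tight constraints at $\varphi^{-1}(x(E))$, contradicting simplicity. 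For the ``if'' direction, assume $E\in{\bf R}_s^C$ and apply a type $C$ analogue of Lemma \ref{lemma} to show that the tight type $C$ Dyck paths at $\varphi^{-1}(x(E))$ are in bijection with the upper-half segments of $E$: one gets the paths $d^{i,j}$ for $i+j<2n+1$ together with the extensions terminating on the antidiagonal for segments with $i+j=2n+1$. Counting these together with the $x_{a,b}=0$ constraints for $[a,b]\notin E$ gives exactly $n^2$ tight inequalities, and linear independence and unimodularity follow by a triangular-matrix argument in the row-ordering of coordinates.

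The main obstacle will be the bookkeeping in step (b), in particular the careful treatment of segments straddling the antidiagonal, where the associated type $C$ Dyck path degenerates to one ending in the rightmost column rather than the top row, and the factor $2$ implicit in the definition of $\overline\lambda$ and $\varphi$ must be correctly absorbed so as to preserve unimodularity of the resulting tangent cone.
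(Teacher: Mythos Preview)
Your proposal is correct and follows essentially the same route as the paper's proof: a type~$C$ analogue of Lemma~\ref{nv} and Proposition~\ref{simpE} for part~(a), and the four-path dependency argument from Theorem~\ref{simpequiv} for the ``only if'' in part~(b). The one methodological difference worth noting is in the ``if'' direction of~(b): rather than proving a fresh type~$C$ analogue of Lemma~\ref{lemma}, the paper reuses the type~$A$ Lemma~\ref{lemma} directly by passing through $\varphi$ and $\overline\lambda$. Specifically, a tight type~$C$ path $d$ ending in the top row is already a type~$A$ path with $S(x(E),d)=M(\overline\lambda,d)$, while a tight type~$C$ path ending on the antidiagonal is symmetrized to the type~$A$ path $d'=(d,(2n{+}1{-}j_{N-1},2n{+}1{-}i_{N-1}),\ldots)$, which is then tight for $x(E)$ in $P_{\overline\lambda}$; Lemma~\ref{lemma} then forces all peaks and valleys of $d'$ into $E$, and the ${\bf R}^C_s$ condition pins down $d$. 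This saves you from redoing Lemma~\ref{super} in type~$C$. Your concern about the factor~$2$ and unimodularity is unnecessary: the tangent cone lives in $U$ and its facet inequalities are the type~$C$ constraints $x_{i,j}\ge 0$ and $S(x,d^{i,j}_C)\le M(\lambda,d^{i,j}_C)$, all with $0/1$ coefficients and triangular with respect to the row ordering, so unimodularity is automatic; the factor~$2$ appears only in the auxiliary map $\varphi$ and plays no role in the cone.
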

\begin{proof}
For every pair $1\le i<j\le 2n+1-i$ we define a type $C$ Dyck path $d^{i,j}_C$. This path has the form $$((i,i+1),(i,i+2),\ldots,(i,j),(i+1,j),(i+2,j),\ldots,(k,j))$$ where either $k=j-1$ or $k+j=2n+1$. In other words, $d_C^{i,j}$ starts in $(i,i+1)$, goes down and to the right until it reaches $(i,j)$ and then continues up and to the right for as long as it can.

Consider some $E\in{\bf R}_p^C$. It is easy to see that for every $[i,j]\in E$ with $1\le i<j\le 2n+1-i$ we have $S(\varphi^{-1}(x(E)),d^{i,j}_C)=M(\lambda,d^{i,j}_C)$. To show that the vertex $\varphi^{-1}(x(E))$ is simple we are to verify two facts. First, that there are no type $C$ Dyck paths $d$ for which $S(\varphi^{-1}(x(E)),d)=M(\lambda,d)$ other than the $d_C^{i,j}$ with $[i,j]\in E$. Second, that $\varphi^{-1}(x(E))_{i,j}>0$ whenever $1\le i<j\le 2n+1-i$ and $[i,j]\in E$.

Since $\overline\lambda$ is a regular $\mathfrak{sl}_{2n}$-weight we may employ Lemma~\ref{lemma}. We immediately obtain the second of the above statements. To prove the first consider a type $C$ Dyck path $$d=((i_1,j_1),\ldots,(i_N,j_N))$$ with $S(\varphi^{-1}(x(E)),d)=M(\lambda,d)$. If $j_N-i_N=1$, then $d$ may be viewed as type $A$ Dyck path with $S(x(E),d)=M(\overline\lambda,d)$. Lemma~\ref{lemma} then tells us that $[i,j]\in E$ for any peak or valley $(i,j)$ of $d$. Due to the definition of  ${\bf R}^C_s$, this is only possible if $d=d_C^{i,j}$ for some $[i,j]\in E$. If $i_N+j_N=2n+1$ then we define a type $A$ Dyck path $$d'=((i_1,j_1),\ldots,(i_N,j_N),(2n+1-j_{N-1},2n+1-i_{N-1}),\ldots,(2n+1-j_1,2n+1-i_1)).$$ $S(\varphi^{-1}(x(E)),d)=M(\lambda,d)$ implies $S(x(E),d')=M(\overline\lambda,d')$, and we see that $[i,j]\in E$ for any peak or valley $(i,j)$ of $d'$. Again, this is only possible if $d=d_C^{i,j}$ for some $[i,j]\in E$.

We proceed to the ``only if'' part. First we show that if $x$ is a simple vertex of $P_\lambda$, then $x=\varphi^{-1}(x(E))$ for some symmetric $E\subset R$. We outline the argument, the details being filled in rather similarly to the proof of Proposition~\ref{simpE}.

For a type $C$ Dyck path $d$ we say that $(i,j)\in d$ is a peak of $d$ if $(i,j-1), (i+1,j)\in d$ or $(i,j-1)\in d$ and $i+j=2n+1$. We say that $(i,j)\in d$ is a valley of $d$ if $(i-1,j), (i,j+1)\in d$ or $(i-1,j)\in d$ and $i+j=2n+1$.

Now let $x$ be a simple vertex of $P_\lambda$ and let $D(x)$ be the set of such type $C$ Dyck paths $d$ that $S(x,d)=M(\lambda,d)$. Then $|D(x)|+|\{x_{i,j}=0\}|=n^2$. If $d\in D(x)$, then for any peak or valley $(i,j)$ we have $x_{i,j}>0$. For all $d\in D(x)$ rewrite the equality $S(x,d)=M(\lambda,d)$ as 
\begin{multline*}
\sum_{\text{peak }(i,j)\text{ of }d}S(x,d_C^{i,j})-\sum_{\text{valley }(i,j)\text{ of }d}S(x,d_C^{i,j})=\\ \sum_{\text{peak }(i,j)\text{ of }d}M(\lambda,d_C^{i,j})-\sum_{\text{valley }(i,j)\text{ of }d}M(\lambda,d_C^{i,j}).
\end{multline*}
Due to $x$ being simple we obtain $S(x,d_C^{i,j})=M(\lambda,d_C^{i,j})$ for any $x_{i,j}>0$ as a linear combination of the above equalities. Consequently, $x=\varphi^{-1}(x(E))$, where $E$ consists of such $[i,j]$ with $1\le i<j\le 2n$ that either $x_{i,j}>0$ or 
$x_{2n+1-j,2n+1-i}>0$. 

We are left to show that for a symmetric $E\subset R$ the point $\varphi^{-1}(x(E)),d)$ is a simple vertex of $P_\lambda$ only if $E\in{\bf R}^C_s$. This is done rather similarly to the proof of the ``only if'' part of Theorem~\ref{simpequiv}. Namely, suppose that $[i,j],[k,l]\in E$ with $1\le i<k<j<l\le 2n$ and $i+j\le 2n+1$ and $k+l\le 2n+1$. We define the type $C$ Dyck path $$d=((d_C^{i,j}\cup d_C^{k,l})\backslash d_C^{k,j})\cup\{(k,j)\}$$ and obtain the relation $$S(\varphi^{-1}(x(E)),d_C^{i,j})+S(\varphi^{-1}(x(E)),d_C^{k,l})=S(\varphi^{-1}(x(E)),d)+S(\varphi^{-1}(x(E)),d_C^{k,j}).\qedhere$$ 
\end{proof}

\section*{Acknowledgements}
This work began at the summer school ``Lie Algebras, Algebraic Groups and Invariant Theory'' in Samara. We thank the organizers of this summer school
for providing a stimulating environment. 

The research was supported by the grant RSF-DFG 16-41-01013.

\end{document}